\DeclareMathAlphabet{\Mymathbb}{U}{bbold}{m}{n}
\DeclareMathAlphabet{\mathpzc}{OT1}{pzc}{m}{it}
 \newtheorem{theorem}{Theorem}[subsection]
 \newtheorem{corollary}[theorem]{Corollary}
 \newtheorem{definition}[theorem]{Definition}
 \newtheorem{lemma}[theorem]{Lemma}
 \newtheorem{remark}[theorem]{Remark}
 \newtheorem{problem}[theorem]{Problem}
\newcommand{\Forall}{\forall \,}
\newcommand{\FirstSymbol}{\mathcal{G}}
\newcommand{\First}[1][]{
  \ifthenelse{\equal{#1}{}}
  {\FirstSymbol}
  {\FirstSymbol_{\scriptscriptstyle{#1}}}
}
\newcommand{\EigFirst}[1][]{
  \ifthenelse{\equal{#1}{}}
  {\mu}
  {\mu_{\scriptscriptstyle{#1}}}
}
\newcommand{\EigFirstInvmin}[1][]{
  \ifthenelse{\equal{#1}{}}     
  {c_{*}}
  {c_{*,\scriptscriptstyle{#1}}}
}
\newcommand{\EigFirstInvmax}[1][]{
  \ifthenelse{\equal{#1}{}}     
  {{c^{*}}}
  {{c^{*}_{\scriptscriptstyle{#1}}}}
}
\newcommand{\CminDetG}[1][]{
  \ifthenelse{\equal{#1}{}}     
  {g_{*}}
  {g_{*,\scriptscriptstyle{#1}}}
}  
\newcommand{\CmaxDetG}[1][]{
  \ifthenelse{\equal{#1}{}}     
  {g^*}
  {g^*_{\scriptscriptstyle{#1}}}
}  
\newcommand{\ConstGeom}[1][]{
   \ifthenelse{\equal{#1}{}}       
   {K\,}
   {K_{{#1}}\,}
}
\newcommand{\FirstForm}[1][]{
  \ifthenelse{\equal{#1}{}}       
  {\operatorname{I_{\point}}}             
  {\operatorname{I_{#1}}}             
}
\newcommand{\GradSymbol}{\operatorname{\mathbf{\nabla}}}
\newcommand{\Grad}{\GradSymbol}
\newcommand{\Div}{\Grad\cdot}
\newcommand{\Lap}{\Delta}
\newcommand{\GradSurf}{\GradSymbol_{\!\scriptscriptstyle\First}}
\newcommand{\DivSurf}{\GradSurf\cdot}
\newcommand{\LapSurf}{\Lap_{\scriptscriptstyle\First}}
\newcommand{\Der}[1][]
{
  \ifthenelse{\equal{#1}{}}
  {\partial}
  {\partial^{\scriptscriptstyle{#1}}}
}
\newcommand{\DerCov}[1][]
{
  \ifthenelse{\equal{#1}{}}
  {\nabla}
  {\nabla^{{#1}}}
}
\newcommand{\DerPar}[2]{\frac{\Der #1}{\Der #2}}
\newcommand{\DerTot}[2][t]
{
  \ifthenelse{\equal{#2}{}}
  {\frac{d #2}{dt}}
  {\frac{d #2}{d #1}}
}
\newcommand{\Diffsymbol}{\operatorname{d}\!}
\newcommand{\Diff}[2][]{
  \ifthenelse{\equal{#1}{}}
  {\Diffsymbol{#2}}
  {\Diffsymbol{#2}_{{#1}}}
}
\newcommand{\DirDerSymb}{D}
\newcommand{\DirDer}[2][]
{
  \ifthenelse{\equal{#1}{}}
  {\DirDerSymb^{#2}}
  {\DirDerSymb^{#2}_{#1}}
}
\newcommand{\ExpIL}[1]{\ensuremath{\operatorname{exp}\left({#1}\right)}}
\newcommand{\REALsymbol}{\mathbb{R}}
\newcommand{\REAL}[1][]{
  \ifthenelse{\equal{#1}{}}
  {\REALsymbol}
  {{\REALsymbol}^{#1}}
}
\newcommand{\EUCLsymbol}{\mathbb E}
\newcommand{\EUCL}[1][]{
  \ifthenelse{\equal{#1}{}}
  {\EUCLsymbol}
  {{\EUCLsymbol}^{#1}}
}
\newcommand{\NATURALsymbol}{\mathbb N}
\newcommand{\NATURAL}[1][]{
  \ifthenelse{\equal{#1}{}}
  {\NATURALsymbol}
  {{\NATURALsymbol}^{#1}}
}
\newcommand{\ABS}[2][]
{
  \ifthenelse{\equal{#1}{}}
  {\left| #2 \right|}
  {\left| #2 \right|_{#1}}
}
\newcommand{\ABSsurf}[1]{\ensuremath{\left|#1\right|_{\scriptscriptstyle\First}}}
\newcommand{\NORM}[2][]
{
  \ifthenelse{\equal{#1}{}}
  {\left\| #2 \right\|}
  {\left\| #2 \right\|_{#1}}
}
\newcommand{\vertiii}[1]{{\left\vert\kern-0.25ex\left\vert\kern-0.25ex\left\vert #1 \right\vert\kern-0.25ex\right\vert\kern-0.25ex\right\vert}}
\newcommand{\BrNORM}[2][]
{
  \ifthenelse{\equal{#1}{}}
  {\vertiii{#2}}
  {\vertiii{#2}_{#1}}
}
\newcommand{\SCAL}[3][]
{
  \ifthenelse{\equal{#1}{}}
  {\left\langle{#2},{#3}\right\rangle}
  {\left\langle{#2},{#3}\right\rangle_{#1}}
}
\newcommand{\SCALF}[3][]
{
  \ifthenelse{\equal{#1}{}}
  {\left({#2},{#3}\right)}
  {\left({#2},{#3}\right)_{#1}}
}
\newcommand{\scalprod}[2]{\left\langle #1,#2 \right\rangle}
\newcommand{\scalprodSurf}[3][]
{
  \ifthenelse{\equal{#1}{}}
  {\left\langle {#2},{#3} \right\rangle_{\scriptscriptstyle\First}}
  {\left\langle {#2},{#3} \right\rangle_{{#1}}}
}
\newcommand{\DET}[1]{\ensuremath{\operatorname{det}(#1)}}
\newcommand{\Trace}[1]{\ensuremath{\operatorname{tr}(#1)}}
\newcommand{\point}[1][]
{
  \ifthenelse{\equal{#1}{}}
  {\mathbf{p}}
  {\mathbf{p}_{#1}}
}
\newcommand{\qpoint}{\mathbf{q}}
\newcommand{\rpoint}{\mathbf{r}}
\newcommand{\midPoint}{\mathbf{m}}
\newcommand{\From}{:}
\newcommand{\To}{\longrightarrow}
\newcommand{\Mapsto}{\longmapsto}
\newcommand{\Closure}[1]{\operatorname{cl}({#1})}
\newcommand{\RegionSymb}{R}
\newcommand{\Region}[1][]
{
  \ifthenelse{\equal{#1}{}}
  {\RegionSymb}
  {\RegionSymb_{#1}}
}
\newcommand{\diam}{\rho}
\newcommand{\Interval}[1][]
{
  \ifthenelse{\equal{#1}{}}
  {I}
  {I_{#1}}
}
\newcommand{\SurfDomainsymb}{\Gamma}
\newcommand{\SurfDomain}[1][]{
  \ifthenelse{\equal{#1}{}}
  {\SurfDomainsymb}
  {\SurfDomainsymb_{#1}}
}
\newcommand{\tSurfDomain}[1][]{
  \ifthenelse{\equal{#1}{}}
  {\tilde{\SurfDomainsymb}}
  {\tilde{\SurfDomainsymb}_{#1}}
}
\newcommand{\SurfDomainBndsymb}{\partial\Gamma}
\newcommand{\SurfDomainBnd}[1][]{
  \ifthenelse{\equal{#1}{}}
  {\SurfDomainBndsymb}
  {\SurfDomainBndsymb_{#1}}
}
\newcommand{\ClosedSurfDomain}[1][]{
  \ifthenelse{\equal{#1}{}}
  {\Closedsymb{\SurfDomain}}
  {\Closedsymb{\SurfDomain[#1]}}
}
\newcommand{\heightsymb}{\mathcal{H}}
\newcommand{\height}[1][]{
  \ifthenelse{\equal{#1}{}}
  {\heightsymb}
  {\heightsymb_{#1}}
}
\newcommand{\BSMsymbol}{\mathcal{B}}
\newcommand{\BSM}[1][]
{
  \ifthenelse{\equal{#1}{}}
  {\BSMsymbol}
  {\BSMsymbol_{#1}}
}
\newcommand{\Surf}{\mathcal{S}}
\newcommand{\SurfBndsymb}{\partial\Surf}
\newcommand{\SurfBnd}[1][]{
  \ifthenelse{\equal{#1}{}}
  {\SurfBndsymb}
  {\SurfBndsymb_{#1}}
}
\newcommand{\Closedsymb}[1]{\bar{#1}}
\newcommand{\ClosedSurf}[1][]{
  \ifthenelse{\equal{#1}{}}
  {\Closedsymb{\Surf}}
  {\Closedsymb{\Surf[#1]}}
}
\newcommand{\Vector}[1]{\mathbf{#1}}
\newcommand{\MapUsymb}{\phi}
\newcommand{\MapU}[1][]
{
  \ifthenelse{\equal{#1}{}}
    {\MapUsymb}
    {\MapUsymb_{\scriptscriptstyle{#1}}}
}
\newcommand{\MapLinsymb}{\mathcal{F}}
\newcommand{\MapLin}[1][]
{
  \ifthenelse{\equal{#1}{}}
  {\ensuremath{\MapLinsymb}}
  {\ensuremath{\MapLinsymb_{\scriptscriptstyle{#1}}}}
}
\newcommand{\MapVsymb}{\psi}
\newcommand{\MapV}[1][]
{
  \ifthenelse{\equal{#1}{}}
    {\MapVsymb}
    {\MapVsymb_{\scriptscriptstyle{#1}}}
}
\newcommand{\Transsymb}{\Phi}
\newcommand{\Trans}[1][]
{
  \ifthenelse{\equal{#1}{}}
    {\Transsymb} 
    {\Transsymb_{\scriptscriptstyle{#1}}}
}
\newcommand{\InvMapsymb}{\Psi}
\newcommand{\InvMap}[1][]
{
  \ifthenelse{\equal{#1}{}}
    {\InvMapsymb}
    {\InvMapsymb_{\scriptscriptstyle{#1}}}
}
\newcommand{\fsymb}{f}
\newcommand{\scalFun}[1][]
{
  \ifthenelse{\equal{#1}{}}
  {\fsymb}
  {\fsymb_{#1}}
}
\newcommand{\tscalFun}[1][]
{
  \ifthenelse{\equal{#1}{}}
  {\tilde{\fsymb}}
  {\tilde{\fsymb}_{#1}}
}
\newcommand{\bscalFun}[1][]
{
  \ifthenelse{\equal{#1}{}}
  {\bar{\fsymb}}
  {\bar{\fsymb}_{#1}}
}
\newcommand{\gsymb}{g}
\newcommand{\scalFung}[1][]
{
  \ifthenelse{\equal{#1}{}}
  {\gsymb}
  {\gsymb_{#1}}
}
\newcommand{\Fsymb}{F}
\newcommand{\FvecFun}[1][]
{
  \ifthenelse{\equal{#1}{}}
  {\Fsymb}
  {\Fsymb_{#1}}
}
\newcommand{\tFvecFun}[1][]
{
  \ifthenelse{\equal{#1}{}}
  {\tilde{\Fsymb}}
  {\tilde{\Fsymb}_{#1}}
}
\newcommand{\Ffunc}[2][]
{
  \ifthenelse{\equal{#1}{}}
  {\ensuremath{\Fsymb_{#2}}}
  {\ensuremath{\Fsymb^{#1}_{#2}}}
}
\newcommand{\Gsymb}{g}
\newcommand{\Gfun}[1][]
{
  \ifthenelse{\equal{#1}{}}
  {\ensuremath{\Gsymb}}
  {\ensuremath{\Gsymb_{{#1}}}}
}
\newcommand{\bGfun}[1][]
{
  \ifthenelse{\equal{#1}{}}
  {\ensuremath{\hat{\Gsymb}}}
  {\ensuremath{\hat{\Gsymb}_{{#1}}}}
}
\newcommand{\inverse}[1]{#1^{-1}}
\newcommand{\JacSymb}{\mathbf{J}}
\newcommand{\Jac}[1][]
{
  \ifthenelse{\equal{#1}{}}
  {\JacSymb}
  {{\JacSymb}_{\scriptscriptstyle{#1}}}
}
\newcommand{\invJac}[1][]
{
  \ifthenelse{\equal{#1}{}}
  {\JacSymb^{\scriptscriptstyle{+}}}
  {{\JacSymb}^{\scriptscriptstyle{+}}_{\scriptscriptstyle{#1}}}
}
\newcommand{\hJacSymb}{\mathbf{\hat{J}}}
\newcommand{\hJac}[1][]
{
  \ifthenelse{\equal{#1}{}}
  {\hJacSymb}
  {{\hJacSymb}_{\scriptscriptstyle{#1}}}
}
\newcommand{\matWSymb}{\mathbf{W}}
\newcommand{\matW}[1][]
{
  \ifthenelse{\equal{#1}{}}
  {\matWSymb}
  {{\matWSymb}_{\scriptscriptstyle{#1}}}
}
\newcommand{\PrincipalK}[1][]
{
  \ifthenelse{\equal{#1}{}}
  {k}
  {k_{#1}}
}
\newcommand{\vecsymb}{q}
\newcommand{\vecFun}[1][]{
  \ifthenelse{\equal{#1}{}}
  {\Vector{\vecsymb}}
  {\vecsymb^{#1}}
}
\newcommand{\tvecFun}[1][]{
  \ifthenelse{\equal{#1}{}}
  {\tilde{\vecsymb}}
  {\tilde{\vecsymb}_{#1}}
}
\newcommand{\vvsymb}{v}
\newcommand{\vv}[1][]{
  \ifthenelse{\equal{#1}{}}
  {\mathbf{\vvsymb}}
  {\vvsymb^{#1}}
}
\newcommand{\wwsymb}{w}
\newcommand{\ww}[1][]
{
  \ifthenelse{\equal{#1}{}}
  {\mathbf{\wwsymb}}
  {\wwsymb^{#1}}
}
\newcommand{\uusymb}{u}
\newcommand{\uu}[1][]
{
  \ifthenelse{\equal{#1}{}}
  {\mathbf{\uusymb}}
  {\uusymb^{#1}}
}
\newcommand{\qqsymb}{q}
\newcommand{\qq}[1][]
{
  \ifthenelse{\equal{#1}{}}
  {\mathbf{\qqsymb}}
  {\qqsymb^{#1}}
}
\newcommand{\VecFieldSymbol}{X}
\newcommand{\VecField}[1][]
{
  \ifthenelse{\equal{#1}{}}
  {\VecFieldSymbol}
  {\VecFieldSymbol^{#1}}
}
\newcommand{\VecFieldYSymbol}{Y}
\newcommand{\VecFieldY}[1][]
{
  \ifthenelse{\equal{#1}{}}
  {\VecFieldYSymbol}
  {\VecFieldYSymbol^{#1}}
}
\newcommand{\xvsymb}{x}
\newcommand{\xv}[1][]
{
  \ifthenelse{\equal{#1}{}}
  {\mathbf{\xvsymb}}
  {\mathbf{\xvsymb}_{\scriptscriptstyle{#1}}}
}
\newcommand{\xvcomp}[1][]{
  \ifthenelse{\equal{#1}{}}
  {\xvsymb}
  {\xvsymb^{\scriptscriptstyle{#1}}}
}
\newcommand{\xcg}[1][]{
  \ifthenelse{\equal{#1}{}}
  {\xvcomp[1]}
  {\xvcomp[1]_{\scriptscriptstyle{#1}}}
}
\newcommand{\ycg}[1][]{
  \ifthenelse{\equal{#1}{}}
  {\xvcomp[2]}
  {\xvcomp[2]_{\scriptscriptstyle{#1}}}
}
\newcommand{\zcg}[1][]{
  \ifthenelse{\equal{#1}{}}
  {\xvcomp[3]}
  {\xvcomp[3]_{\scriptscriptstyle{#1}}}
}
\newcommand{\gcscomp}{\xcg,\ycg,\zcg}
\newcommand{\hxvsymb}{\hat{x}}
\newcommand{\hxv}[1][]
{
  \ifthenelse{\equal{#1}{}}
  {\mathbf{\hxvsymb}}
  {\mathbf{\hxvsymb}_{\scriptscriptstyle{#1}}}
}
\newcommand{\hxvcomp}[1][]{
  \ifthenelse{\equal{#1}{}}
  {\hxvsymb}
  {\hxvsymb^{\scriptscriptstyle{#1}}}
}
\newcommand{\hxcg}[1][]{
  \ifthenelse{\equal{#1}{}}
  {\hxvcomp[1]}
  {\hxvcomp[1]_{\scriptscriptstyle{#1}}}
}
\newcommand{\hycg}[1][]{
  \ifthenelse{\equal{#1}{}}
  {\hxvcomp[2]}
  {\hxvcomp[2]_{\scriptscriptstyle{#1}}}
}
\newcommand{\hzcg}[1][]{
  \ifthenelse{\equal{#1}{}}
  {\hxvcomp[3]}
  {\hxvcomp[3]_{\scriptscriptstyle{#1}}}
}
\newcommand{\hgcscomp}{\hxcg,\hycg}
\newcommand{\svsymb}{s}
\newcommand{\sv}[1][]{
  \ifthenelse{\equal{#1}{}}
  {\mathbf{\svsymb}}
  {\mathbf{\svsymb}_{\scriptscriptstyle{#1}}}
}
\newcommand{\svcomp}[1][]
{
  \ifthenelse{\equal{#1}{}}
  {\svsymb}
  {\svsymb^{\scriptscriptstyle{#1}}}
}
\newcommand{\xcl}[1][]{
  \ifthenelse{\equal{#1}{}}
   {\svcomp[1]}
   {\svcomp[1]_{\scriptscriptstyle{#1}}}
}
\newcommand{\ycl}[1][]{
  \ifthenelse{\equal{#1}{}}
   {\svcomp[2]}
   {\svcomp[2]_{\scriptscriptstyle{#1}}}
}
\newcommand{\zcl}[1][]{
  \ifthenelse{\equal{#1}{}}
   {\svcomp[3]}
   {\svcomp[3]_{\scriptscriptstyle{#1}}}
}
\newcommand{\lcscomp}{\xcl,\ycl}
\newcommand{\ProjSymb}{\operatorname{\pi}}
\newcommand{\ProjFun}[2][]{
  \ifthenelse{\equal{#1}{}}
  {\ProjSymb\left(#2\right)}
  {\ProjSymb_{\scriptscriptstyle{#1}}\left(#2\right)}
}
\newcommand{\Prm}[1][]
{
  \ifthenelse{\equal{#1}{}}
  {\operatorname{pr}}
  {\operatorname{pr}_{\scriptscriptstyle{#1}}}
}
\newcommand{\TanPlane}[2][]
{
  \ifthenelse{\equal{#1}{}}
  {T_{\scriptscriptstyle{\point}}#2}
  {T_{\scriptscriptstyle{#1}}#2}
}
\newcommand{\SubsetSymbol}{\mathcal{U}}
\newcommand{\SubsetU}[1][]
{
  \ifthenelse{\equal{#1}{}}
  {{U}}
  {{U}_{#1}}
}
\newcommand{\SubsetV}[1][]
{
  \ifthenelse{\equal{#1}{}}
  {{V}}
  {{V}_{#1}}
}
\newcommand{\SubsetWsymb}{K}
\newcommand{\SubsetW}[1][]
{
  \ifthenelse{\equal{#1}{}}
  {{\SubsetWsymb}}
  {{\SubsetWsymb}_{#1}}
}
\newcommand{\NeighSymbol}{\mathcal{N}}
\newcommand{\Neigh}[1][]
{
  \ifthenelse{\equal{#1}{}}
  {\NeighSymbol_{\point}}
  {\NeighSymbol_{#1}}
}
\newcommand{\NeighSurf}[1][]
{
  \ifthenelse{\equal{#1}{}}
  {\SubsetSymbol_{\point}}
  {\SubsetSymbol_{#1}}
}
\newcommand{\NormSymb}{\mathbf{N}}
\newcommand{\normalSurf}[1][]
{
  \ifthenelse{\equal{#1}{}}
  {\NormSymb}
  {\NormSymb(#1)}
}
\newcommand{\normalInterp}[1][]
{
  \ifthenelse{\equal{#1}{}}
   {\tilde{\NormSymb}}
   {\tilde{\NormSymb}_{\scriptscriptstyle{#1}}}
}
\newcommand{\normalEdge}{\mathbf{\nu}} 
\newcommand{\basisCC}{t}
\newcommand{\basisGC}{e}
\newcommand{\vecBaseGC}[1][]
{
  \ifthenelse{\equal{#1}{}}
  {\mathbf{\basisGC}}
  {\mathbf{\basisGC}_{#1}}
}
\newcommand{\vecBasePhys}[1][]
{
  \ifthenelse{\equal{#1}{}}
  {\mathbf{\basisGC}}
  {\mathbf{\basisGC}_{#1}}
}
\newcommand{\vecBaseCCcv}[1][]
{
  \ifthenelse{\equal{#1}{}}
  {\mathbf{\basisCC}}
  {\mathbf{\basisCC}_{#1}}
}
\newcommand{\tvecBaseCCcv}[1][]
{
  \ifthenelse{\equal{#1}{}}
  {\tilde{\mathbf{\basisCC}}}
  {\tilde{\mathbf{\basisCC}}_{#1}}
}
\newcommand{\hvecBaseCCcv}[1][]
{
  \ifthenelse{\equal{#1}{}}
  {\hat{\mathbf{\basisCC}}}
  {\hat{\mathbf{\basisCC}}_{#1}}
}
\newcommand{\vecBaseCCctrv}[1][]
{
  \ifthenelse{\equal{#1}{}}
  {\mathbf{\basisCC}}
  {\mathbf{\basisCC}^{#1}}
}
\newcommand{\metricsymbol}{g}
\newcommand{\metrTensCv}[1]{\metricsymbol_{\mbox{\tiny{#1}}}}
\newcommand{\metrTensCtrv}[1]{\metricsymbol^{#1}}
\newcommand{\first}[1]{
  \IfEqCase{#1}{
    {1}{\operatorname{E}}
    {2}{\operatorname{F}}
    {3}{\operatorname{G}}
  }
  [\PackageError{first}{Undefined option to first: #1}{}]%
}
\newcommand{\SecondFormSymbol}{\ensuremath{\operatorname{II}}}
\newcommand{\SecondForm}[1][]
{
  \ifthenelse{\equal{#1}{}}
  {\SecondFormSymbol_{\point}}
  {\SecondFormSymbol_{#1}}
}
\newcommand{\second}[1]{
  \IfEqCase{#1}{
    {1}{\operatorname{e}}
    {2}{\operatorname{f}}
    {3}{\operatorname{g}}
  }
  [\PackageError{first}{Undefined option to first: #1}{}]
}
\newcommand{\WeigSymbol}{\mathcal{W}}
\newcommand{\Weig}[1][]
{
  \ifthenelse{\equal{#1}{}}
  {\WeigSymbol}
  {\WeigSymbol_{#1}}
}
\newcommand{\velSymbol}{u}
\newcommand{\vectvel}[1][]
{
   \ifthenelse{\equal{#1}{}}
   {\vec{\velSymbol}}
   {\vec{\velSymbol}(#1)}
}
\newcommand{\velcompContr}[2][i]
{
   \ifthenelse{\equal{#2}{}}
   {\velSymbol^{#1}}
   {\velSymbol^{#1}(#2)}}
\newcommand{\velcompPhys}[2][i]
{
   \ifthenelse{\equal{#2}{}}
   {\velSymbol_{(#1)}}
   {\velSymbol_{(#1)}(#2)}}
\newcommand{\velSymbolRP}{v}
\newcommand{\velcompContrRP}[2][i]
{
   \ifthenelse{\equal{#2}{}}
   {\velSymbolRP^{#1}}
   {\velSymbolRP^{#1}(#2)}}
\newcommand{\velRP}[1][]
{
  \ifthenelse{\equal{#1}{}}
  {\velSymbolRP}
  {\velSymbolRP_{#1}}
}
\newcommand{\velcompApprox}[2][i]
{
   \ifthenelse{\equal{#2}{}}
   {\velSymbol^{#1)}}
   {\velSymbol^{#1}_{(#2)}}
}
\newcommand{\VelSymbol}{U}
\newcommand{\vectVel}[1][]
{
   \ifthenelse{\equal{#1}{}}
   {\vec{\VelSymbol}}
   {\vec{\VelSymbol}(#1)}
}
\newcommand{\Velcomp}[2][i]
{
   \ifthenelse{\equal{#2}{}}
   {\VelSymbol^{#1}}
   {\VelSymbol^{#1}(#2)}
}
\newcommand{\VprimoSymbol}{\tilde{u}}
\newcommand{\Vprimo}[1][]
{
   \ifthenelse{\equal{#1}{}}
   {\VprimoSymbol}
   {\VprimoSymbol(#1)}
}
\newcommand{\VprimoComp}[2][i]
{
   \ifthenelse{\equal{#2}{}}
   {\VprimoSymbol^{#1}}
   {\VprimoSymbol^{#1}(#2)}
}
\newcommand{\ttvelSymbol}{\tilde{\Mymathbb{u}}}
\newcommand{\ttvel}[1][]
{
   \ifthenelse{\equal{#1}{}}
   {\mathbf{\ttvelSymbol}}
   {\mathbf{\ttvelSymbol}(#1)}
}
\newcommand{\ttvelComp}[2][i]
{
   \ifthenelse{\equal{#2}{}}
   {\ttvelSymbol^{#1}}
   {\ttvelSymbol^{#1}(#2)}
}
\newcommand{\MatAlphaSymbol}{\mathbb{A}}
\newcommand{\MatAlpha}[1][]{%
  \ifthenelse{\equal{#1}{}}
  {\MatAlphaSymbol}
  {\MatAlphaSymbol_{#1}}
}
\newcommand{\QSymbol}{q}
\newcommand{\Qdisch}[1][]
{
   \ifthenelse{\equal{#1}{}}
   {\mathbf{\QSymbol}}
   {\mathbf{\QSymbol}(#1)}
}
\newcommand{\Qcomp}[2][i]
{
   \ifthenelse{\equal{#2}{}}
   {\QSymbol^{#1}}
   {\QSymbol^{#1}(#2)}
}
\newcommand{\Qvect}[1][]
{
   \ifthenelse{\equal{#1}{}}
   {\mathbf{\QSymbol}}
   {\mathbf{\QSymbol}(#1)}
}
\newcommand{\FricSymbol}{f}
\newcommand{\vectFric}[1][]
{
   \ifthenelse{\equal{#1}{}}
   {\mathbf{\FricSymbol}}
   {\mathbf{\FricSymbol}_{\scriptscriptstyle{#1}}}
}
\newcommand{\Friccomp}[2][i]
{
   \ifthenelse{\equal{#2}{}}
   {\FricSymbol_{#1}}
   {\FricSymbol_{#1}(#2)}}
\newcommand{\BFsymbol}{\tau}
\newcommand{\BottomFriction}[1][]{
  \ifthenelse{\equal{#1}{}}
  {\BFsymbol_{b}}
  {\BFsymbol_{b}^{#1}}
}
\newcommand{\ProjMatSymb}{\mathbb{P}}
\newcommand{\ProjMat}[1][]{
  \ifthenelse{\equal{#1}{}}
  {\ProjMatSymb}
  {\ProjMatSymb_{#1}}
}
\newcommand{\IDSymbol}{\mathbb{I}}
\newcommand{\IDtens}[1][]{
  \ifthenelse{\equal{#1}{}}
  {\IDSymbol}
  {\IDSymbol(#1)}
}
\newcommand{\IDMat}{\IDSymbol}
\newcommand{\tensSymbol}{\mathbb{T}}
\newcommand{\tenscompSymbol}{\tau}
\newcommand{\tens}[1][]{
  \ifthenelse{\equal{#1}{}}
  {\tensSymbol}
  {\tensSymbol(#1)}
}
\newcommand{\tenscomp}[2][ij]
{
  \ifthenelse{\equal{#2}{}}
  {\tenscompSymbol^{#1}}
  {\tenscompSymbol^{#1}(#2)}
}
\newcommand{\tensrow}[2][i]
{
  \ifthenelse{\equal{#2}{}}
  {\tensSymbol^{(#1)}}
  {\tensSymbol^{(#1)}(#2)}
}
\newcommand{\TensSymbol}{\mathbf{T}}
\newcommand{\Tens}[1][]{
  \ifthenelse{\equal{#1}{}}
  {\TensSymbol}
  {\TensSymbol_{#1}}
}
\newcommand{\TensCompSymbol}{\TensSymbol}
\newcommand{\TensComp}[2][ij]
{
  \ifthenelse{\equal{#2}{}}
  {\TensCompSymbol^{#1}}
  {\TensCompSymbol^{#1}(#2)}
}
\newcommand{\tensPrimoSymbol}{\tilde{\mathbf{\tau}}}
\newcommand{\tensPrimo}[1][]{
  \ifthenelse{\equal{#1}{}}
  {\tensPrimoSymbol}
  {\tensPrimoSymbol(#1)}
}
\newcommand{\tensPrimoCompSymbol}{\tensPrimoSymbol}
\newcommand{\tensPrimoComp}[2][ij]
{
  \ifthenelse{\equal{#2}{}}
  {\tensPrimoCompSymbol^{#1}}
  {\tensPrimoCompSymbol^{#1}(#2)}
}
\newcommand{\MCxl}[1][]{\ifthenelse{\equal{#1}{}}{h_{(1)}}{h_{(1),#1}}} 
\newcommand{\MCyl}[1][]{\ifthenelse{\equal{#1}{}}{h_{(2)}}{h_{(2),#1}}} 
\newcommand{\MCzl}[1][]{\ifthenelse{\equal{#1}{}}{h_{(3)}}{h_{(3),#1}}}
\newcommand{\MPsymb}{h}
\newcommand{\Lev}{\ell}
\newcommand{\smallestH}[1][]
{
  \ifthenelse{\equal{#1}{}}
    {{l}}
    {{l}_{#1}}
}
\newcommand{\meshparam}[1][]
{
  \ifthenelse{\equal{#1}{}}
    {\MPsymb}
    {\MPsymb_{\scriptscriptstyle{#1}}}
}
\newcommand{\InradiusSymbol}{r}
\newcommand{\Inradius}[1][]
{
  \ifthenelse{\equal{#1}{}}
  {\InradiusSymbol}
  {\InradiusSymbol_{\scriptscriptstyle{#1}}}
}
\newcommand{\Tsymb}{\mathcal{T}}
\newcommand{\Triang}[1][]
{
  \ifthenelse{\equal{#1}{}}
    {\Tsymb}
    {\Tsymb_{#1}}
}
\newcommand{\TriangH}[1][]
{
  \ifthenelse{\equal{#1}{}}
    {\Tsymb_{\meshparam}}
    {\Tsymb_{#1}}
}
\renewcommand{\Triang}{\TriangH}
\newcommand{\Edgesymb}{\sigma}
\newcommand{\Edge}[1][]{
  \ifthenelse{\equal{#1}{}}
    {\Edgesymb}
    {\Edgesymb_{#1}}
}
\newcommand{\EdgeH}[1][]{
  \ifthenelse{\equal{#1}{}}
    {\Edgesymb_{\meshparam}}
    {\Edgesymb_{\meshparam,#1}}
}
\newcommand{\NEdge}[1][]{
  \ifthenelse{\equal{#1}{}}
    {N_{\Edgesymb}}
    {N_{\Edgesymb({#1})}}
}
\newcommand{\Cellsymb}{T}
\newcommand{\Cell}[1][]{
  \ifthenelse{\equal{#1}{}}
    {\Cellsymb}
    {\Cellsymb_{#1}}
}
\newcommand{\tCell}[1][]{
  \ifthenelse{\equal{#1}{}}
    {\tilde{\Cellsymb}}
    {\tilde{\Cellsymb}_{#1}}
}
\newcommand{\CellH}[1][]{
  \ifthenelse{\equal{#1}{}}
    {\Cellsymb_{\meshparam}}
    {\Cellsymb_{\meshparam,#1}}
}
\newcommand{\areaSymb}{\mathcal{A}}
\newcommand{\CellArea}[1][]
{
  \ifthenelse{\equal{#1}{}}
    {\areaSymb_{\Cell}}
    {\areaSymb_{#1}}
}
\newcommand{\CellHArea}[1][]
{
  \ifthenelse{\equal{#1}{}}
    {\areaSymb_{\CellH}}
    {\areaSymb_{\meshparam,#1}}
}
\newcommand{\NCell}[1][]{
  \ifthenelse{\equal{#1}{}}
    {N_{\Cellsymb}}
    {N_{\Cellsymb({#1})}}
}
\newcommand{\lengthSymb}{{\ell}}
\newcommand{\edgeLength}[1][]
{
  \ifthenelse{\equal{#1}{}}
  {\lengthSymb_{\Edge}}
  {\lengthSymb_{#1}}
}
\newcommand{\edgeHLength}[1][]
{
  \ifthenelse{\equal{#1}{}}
  {\lengthSymb_{\EdgeH}}
  {\lengthSymb_{\meshparam,#1}}
}
\newcommand{\Sourcesymb}{\mathbf{S}}
\newcommand{\Source}[1][]
{
  \ifthenelse{\equal{#1}{}}
    {\Sourcesymb}
    {\Sourcesymb_{#1}}
}
\newcommand{\SourceEdge}[1][]{
  \ifthenelse{\equal{#1}{}}
    {\Sourcesymb_{ij}}
    {\Sourcesymb_{#1}}
}
\newcommand{\FluxEdgesymb}{\mathbf{F}}
\newcommand{\FluxEdge}[1][]{
  \ifthenelse{\equal{#1}{}}
    {\FluxEdgesymb_{ij}}
    {\FluxEdgesymb_{#1}}
}
\newcommand{\numFlux}[1][]
{
  \ifthenelse{\equal{#1}{}}
  {\tilde{\fsymb}}
  {\tilde{\fsymb}_{#1}}
}
\newcommand{\FluxFuncNormSymbol}{\mathbf{F}}
\newcommand{\FluxFuncNorm}[1][]{
  \ifthenelse{\equal{#1}{}}
    {\FluxFuncNormSymbol^{\normalEdge}}
    {\FluxFuncNormSymbol^{\normalEdge}_{#1}}    
}
\newcommand{\JacobianSymbol}{\mathbf{A}}
\newcommand{\Jacobian}[1][]{
  \ifthenelse{\equal{#1}{}}
    {\JacobianSymbol}
    {\JacobianSymbol_{#1}}    
}
\newcommand{\EValSymbol}{\lambda}
\newcommand{\EVal}[1][]{
  \ifthenelse{\equal{#1}{}}
  {\EValSymbol}
  {\EValSymbol_{#1}}
}
\newcommand{\EVecSymbol}{\mathbf{r}}
\newcommand{\EVec}[2][]{
  \ifthenelse{\equal{#1}{}}
  {\EVecSymbol^{(#2)}}
  {\EVecSymbol^{(#2)}_{#1}}
}
\newcommand{\midPointEdge}[1][]
{
  \ifthenelse{\equal{#1}{}}
  {\midPoint_{\scriptscriptstyle\Edge}}
  {\midPoint_{\scriptscriptstyle\Edge[#1]}}
}
\newcommand{\gpPointEdgeDG}[1][]
{
  \ifthenelse{\equal{#1}{}}
  {\point_{\scriptscriptstyle\Edge}}
  {\point_{\scriptscriptstyle\Edge,#1}}
}
\newcommand{\midPointCell}[1][]
{
  \ifthenelse{\equal{#1}{}}
  {\midPoint_{\scriptscriptstyle\Cell}}
  {\midPoint_{\scriptscriptstyle\Cell[#1]}}
}
\newcommand{\speedRS}[2][]
{
  \ifthenelse{\equal{#2}{}}
  {S_{#2}}
  {S_{#2}^{#1}}
}
\newcommand{\Interpolant}{I_{\meshparam}}
\newcommand{\ContSymbol}{C}
\newcommand{\Cont}[1][]{
  \ifthenelse{\equal{#1}{}}
  {\ContSymbol^{0}}
  {\ContSymbol^{#1}}
}
\newcommand{\Cinf}[1][]{
  \ifthenelse{\equal{#1}{}}
  {\ContSymbol^{\infty}}
  {\ContSymbol^{\infty}(#1)}
}
\newcommand{\SobSymbol}{W}
\newcommand{\HilbSymbol}{H}
\newcommand{\Sob}[2][]{
  \ifthenelse{\equal{#1}{}}
  {\HilbSymbol^{#2}}
  {\SobSymbol^{#2,#1}}  
}
\newcommand{\LspaceSymb}{L}
\newcommand{\Lspace}[1][]{
  \ifthenelse{\equal{#1}{}}
  {\LspaceSymb^{2}}
  {\LspaceSymb^{#1}}  
}
\newcommand{\TestSpSymbol}{\mathcal{V}}
\newcommand{\TestSpace}[1][]{
  \ifthenelse{\equal{#1}{}}
  {\TestSpSymbol({\SurfDomain})}
  {\TestSpSymbol_{#1}({\SurfDomain})}
}
\newcommand{\TestSpaceEmbedded}[1][]{
  \ifthenelse{\equal{#1}{}}
  {\TestSpSymbol(\TriangH(\SurfDomain))}
  {\TestSpSymbol_{#1}(\TriangH(\SurfDomain))}
}
\newcommand{\TestSpaceIntrinsic}[1][]{
  \ifthenelse{\equal{#1}{}}
  {\TestSpSymbol(\Triang(\SurfDomain))}
  {\TestSpSymbol_{#1}(\Triang(\SurfDomain))}
}
\newcommand{\TestSpaceChart}[1][]{
  \ifthenelse{\equal{#1}{}}
  {\TestSpSymbol(\Triang(\SubsetU))}
  {\TestSpSymbol_{#1}(\Triang(\SubsetU))}
}
\newcommand{\TestSpaceCell}[1][]{
  \ifthenelse{\equal{#1}{}}
  {\TestSpSymbol_{\meshparam}(\Cell)}
  {\TestSpSymbol_{\meshparam}(\Cell_{#1})}
}
\newcommand{\TestSpaceApprox}{\TestSpaceIntrinsic[\meshparam]}
\newcommand{\PolySymb}{\mathcal{P}}
\newcommand{\PONE}{\PolySymb_{1}}
\newcommand{\GaussCurv}{\curvature}
\newcommand{\AngleSymbol}{\theta}
\newcommand{\DevAngle}[1][]
{
  \ifthenelse{\equal{#1}{}}
  {\AngleSymbol}
  {\AngleSymbol_{\scriptscriptstyle{#1}}}
}
\newcommand{\relheightSymb}{\pi}
\newcommand{\relheight}[1][]
{
  \ifthenelse{\equal{#1}{}}
  {\relheightSymb_{\scriptscriptstyle{\SurfDomain}}}
  {\relheightSymb_{\scriptscriptstyle{#1}}}
}
\newcommand{\conormal}{\mathbf{\mu}}
\newcommand{\SolSymbol}{u}
\newcommand{\Sol}{\SolSymbol}
\newcommand{\AvgSol}{\bar{\SolSymbol}}
\newcommand{\force}{f}
\newcommand{\DiffCoef}{\epsilon}
\newcommand{\DiffTens}{\mathbb{D}}
\newcommand{\DiffEig}[1][]
{ \ifthenelse{\equal{#1}{}}
  {d}
  {d_{#1}}
}
\newcommand{\DiffEigMax}{\DiffEig^*}
\newcommand{\DiffEigMin}{\DiffEig_*}
\newcommand{\BilinearStiffSymbol}{a}
\newcommand{\BilinearStiff}[3][]
{
  \ifthenelse{\equal{#1}{}}
  {\BilinearStiffSymbol(#2,#3)}    
  {\BilinearStiffSymbol_{#1}(#2,#3)}    
}
\newcommand{\BilinearAdvSymbol}{b}
\newcommand{\BilinearAdv}[3][]
{
  \ifthenelse{\equal{#1}{}}
  {\BilinearAdvSymbol(#2,#3)}    
  {\BilinearAdvSymbol_{#1}(#2,#3)}    
}
\newcommand{\BilinearMassSymbol}{m}
\newcommand{\BilinearMass}[3][]
{
  \ifthenelse{\equal{#1}{}}
  {\BilinearMassSymbol(#2,#3)}    
  {\BilinearMassSymbol_{#1}(#2,#3)}    
}
\newcommand{\BilinearReactSymbol}{c}
\newcommand{\BilinearReact}[3][]
{
  \ifthenelse{\equal{#1}{}}
  {\BilinearReactSymbol(#2,#3)}    
  {\BilinearReactSymbol_{#1}(#2,#3)}    
}
\newcommand{\RhsSymbol}{F}
\newcommand{\Rhs}[1]{\RhsSymbol(#1)}
\newcommand{\TestSymbol}{v}
\newcommand{\Test}[1][]
{
  \ifthenelse{\equal{#1}{}}
  {\TestSymbol}  
  {\TestSymbol_{\scriptscriptstyle{#1}}}
}
\newcommand{\SolApprox}{\Sol_{\meshparam}}
\newcommand{\forceApprox}{\force_{\meshparam}}
\newcommand{\hnodalBasis}[1]{\hat{\varphi}_{\scriptscriptstyle{#1}}}
\newcommand{\nodalBasis}[1]{\varphi_{\scriptscriptstyle{#1}}}
\newcommand{\nNodes}[1][]
{
  \ifthenelse{\equal{#1}{}}
  {N^{\scriptscriptstyle{dof}}}
  {N^{\scriptscriptstyle{dof}}_{\scriptscriptstyle{#1}}}
}
\newcommand{\TestApprox}[1][]
{
  \ifthenelse{\equal{#1}{}}
  {\TestSymbol_{\scriptscriptstyle{\meshparam}}}  
  {\TestSymbol_{\scriptscriptstyle{\meshparam,#1}}}
}
\newcommand{\bTestApprox}[1][]
{
  \ifthenelse{\equal{#1}{}}
  {\hat{\TestSymbol}_{\scriptscriptstyle{\meshparam}}}  
  {\hat{\TestSymbol}_{\scriptscriptstyle{\meshparam,#1}}}
}
\newcommand{\TestWSymbol}{w}
\newcommand{\TestW}[1][]
{
  \ifthenelse{\equal{#1}{}}
  {\TestWSymbol}  
  {\TestWSymbol_{\scriptscriptstyle{#1}}}
}
\newcommand{\TestWApprox}[1][]
{
  \ifthenelse{\equal{#1}{}}
  {\TestWSymbol_{\scriptscriptstyle{\meshparam}}}  
  {\TestWSymbol_{\scriptscriptstyle{\meshparam,#1}}}
}
\newcommand{\bTestWApprox}[1][]
{
  \ifthenelse{\equal{#1}{}}
  {\hat{\TestWSymbol}_{\scriptscriptstyle{\meshparam}}}  
  {\hat{\TestWSymbol}_{\scriptscriptstyle{\meshparam,#1}}}
}
\newcommand{\coeffTest}[1]{\TestSymbol_{#1}}
\newcommand{\BilinearStiffApprox}[2]{\BilinearStiffSymbol_{\meshparam}(#1,#2)}
\newcommand{\RhsApprox}[1]{\RhsSymbol_{\meshparam}(#1)}
\newcommand{\matrixMass}{\mathbf{M}}
\newcommand{\viscositySymb}{\nu}
\newcommand{\viscosity}[1][]
{
  \ifthenelse{\equal{#1}{}}
  {\viscositySymb}
  {\viscositySymb_{\scriptscriptstyle{#1}}}
}
\newcommand{\ResidualSymbol}{R}
\newcommand{\Residual}[1][]
{
  \ifthenelse{\equal{#1}{}}
  {\ResidualSymbol}
  {\ResidualSymbol_{#1}}
}
\newcommand{\CPoincare}{C_{_{\SurfDomain}}}
\newcommand{\Riccibound}{R}
\newcommand{\curvature}[1][]
{
  \ifthenelse{\equal{#1}{}}
  {\kappa}
  {\kappa_{#1}}
}
\newcommand{\Eig}{\lambda_1}
\newcommand{\QuadRule}[2][]
{
  \ifthenelse{\equal{#1}{}}
  {Q(#2)}
  {Q_{#1}(#2)}
}     
\renewcommand{\paragraph}[1]{\subsubsection*{#1}}
\title{Convergence analysis of the intrinsic surface finite element method}
\author{Elena Bachini$^1$ and Mario Putti$^2$}
\address{$^1$ Department of Mathematics ``Tullio Levi-Civita'', University
    of Padua, Italy  
   (elena.bachini@unipd.it)\\
   $^2$ Department of Agronomy, Food, Natural
    resources, Animals and Environment, University of Padua, Italy
    (mario.putti@unipd.it)
}
\begin{document}
\maketitle
\begin{abstract}
  The Intrinsic Surface Finite Element Method (ISFEM) was recently
  proposed to solve Partial Differential Equations (PDEs) on surfaces.
  ISFEM proceeds by writing the PDE with respect to a local coordinate
  system anchored to the surface and makes direct use of the resulting
  covariant basis.  Starting from a shape-regular triangulation of the
  surface, existence of a local parametrization for each triangle is
  exploited to approximate relevant quantities on the local
  chart. Standard two-dimensional FEM techniques in combination with
  surface quadrature rules complete the ISFEM formulation thus
  achieving a method that is fully intrinsic to the surface and makes
  limited use of the surface embedding only for the definition of
  basis functions. However, theoretical properties have not
  yet been proved. In this work we complement the original derivation
  of ISFEM with its complete convergence theory and propose the
  analysis of the stability and error estimates by carefully tracking
  the role of the geometric quantities in the constants of the error
  inequalities.  Numerical experiments are included to support the
  theoretical results.
\end{abstract}
 
\keywords{
  Surface PDEs,
  Intrinsic Surface Finite Element Method,
  Convergence Theory
}

\subjclass{
  58J32, 65N30, 65N15
}


\section{Introduction}
\label{sec:intro}
Surface phenomena are ubiquitous in nature, playing an important role
in mediating exchanges between contrasting media. They encompass a
wide range of scales, from nano to planetary, with examples including
earth processes~\cite{art:Bouchut2004, art:FernandezNieto2008,
  art:Flyer2012, art:Ferroni2016, art:Fent2017, art:BP20}, biological
applications~\cite{ art:Neilson2011, art:Lowengrub2016,
  art:nitschke2012}, and image
processing~\cite{art:Bertalmo2001,art:Stocker2008}.  These models are
typically based on partial differential equations (PDEs) governing
balance laws of scalar, vector, and tensor quantities living on the
surface. The detailed mathematical understanding of these PDEs is
still limited, and applications are tackled by numerical techniques.

Most of the approaches developed so far for the discretization of
surface PDEs rely heavily on the embedding in the ambient Euclidean
space to project quantities back to the surface. In essence, the
quantities of interest arising from the solution of the PDE are
extended to a tubular neighborhood of the surface and then projected
back to the surface or its piecewise interpolation, thus avoiding
altogether the need to use charts~\cite{art:Nestler2019}. This
strategy has allowed the development of conforming and nonconforming
finite element methods using the so-called Surface Finite Element
(SFEM) originally developed in~\cite{art:Dziuk1988}
(see~\cite{art:Dziuk2013} for a recent review), with extensions to
discontinuous Galerkin~\cite{art:Antonietti2015} and low order virtual
element methods~\cite{art:Frittelli2018}.
A different approach has been recently proposed in~\cite{art:BFP21},
where the Intrinsic Surface Finite Element Method (ISFEM) has been
developed and favorably compared to the embedded approach
of~\cite{art:Dziuk2013} for the surface advection-diffusion-reaction
equation, and to other available methods for the scalar and
vector-valued surface heat equation~\cite{art:Praetorius2024}. A
variant of ISFEM has been used to develop arbitrary order virtual
elements on surfaces~\cite{art:BMP21}, by working directly on the
chart, and thus needs a complete knowledge of the parametrization.

The ISFEM method is based on piecewise linear approximations of the
discrete spaces and relies exclusively on geometric quantities that
are intrinsic to the surface.  The main advantage of ISFEM with
respect to other embedded approaches is that the numerical solution,
whether scalar, vector, or tensor, is an object intrinsically defined
on the surface, avoiding the need to define its extension in $\REAL^3$
and its projection back to the surface. In addition, the ISFEM
formulation in the scalar case requires only the knowledge of the
tangent plane at the vertices of the surface triangulation in a form
that can be exact, by means of the knowledge of the parametrization,
or approximate, i.e., starting from point data. The embedding of the
surface $\SurfDomain$ in $\REAL^3$ is used exclusively in the
definition of the basis functions, which are defined by lifting onto
local tangent planes the first order polynomials defined either in the
ambient space or on a local chart. As a consequence, the
method can freely use different embeddings (metrics), can be adapted
to multiple charts if available, and, unlike SFEM or other approaches
(see e.g.~\cite{art:Nestler2019,art:Praetorius2024}), can be
extended with few and straight-forward modifications to vector and
tensor-valued PDEs (see~\cite{art:Praetorius2024} for the
vector-valued case).

In this paper we develop the full numerical analysis of ISFEM, not yet
addressed in previous work.  It turns out that the convergence
estimates arise directly from the redefinition of an appropriate
scalar product intrinsic to $\SurfDomain$.
We start with the identification of a proper Local Coordinate System
(LCS) anchored on the surface $\SurfDomain$, we discuss the weak
formulation of the PDE written in covariant form and defined on
appropriate surface Sobolev spaces, e.g., $\Sob{1}(\SurfDomain)$ for a
closed surface without boundary or $\Sob{1}_0(\SurfDomain)$ for a
surface with homogeneous Dirichlet boundary. This operation introduces
anisotropy due to the presence of the metric tensor arising from the
first fundamental form of $\SurfDomain$. This anisotropy, whose ratio
remains always bounded for a regular surface, adds to the eventual
anisotropy of a tensor-valued diffusion coefficient.  This added
difficulty is counterbalanced by the fact that the ensuing numerical
discretization, being defined on the LCS and thus on the chart or
atlas, can exploit all the techniques developed for a planar
two-dimensional domain and inherits all the related properties.
For this reason, in our convergence estimates we discuss how the
inequality constants depend upon the surface geometric quantities,
intrinsic or extrinsic, i.e., depending on the first or second
fundamental form. Our calculations show that optimal second order
convergence is obtained under standard assumptions on the regularity
of the mesh.  This is experimentally discussed in the numerical
results section, where convergence rates with respect to a
manufactured solution are exposed and the behavior of the error
constants at varying curvatures discussed. An example on a sphere
defined by multiple charts is also presented.

\section{The Intrinsic Surface FEM}
\label{sec:intrinsic}

Consider a compact surface $\SurfDomain\subset\REAL[3]$ over which we
would like to solve an elliptic equation of the form:
\begin{equation}
  \label{eq:pde}
  -\DivSurf\left(\DiffTens\GradSurf\Sol\right)
  =\force \qquad \text{on} \quad \SurfDomain
  \, ,
\end{equation}
where $\SurfDomain$ is assumed to be fixed in time and the solution
$\Sol\From\SurfDomain\rightarrow\REAL$ is a scalar function defined on
the surface.  The tensor $\DiffTens$ is a rank-2 symmetric and
positive-definite diffusion tensor, and we assume 
$\force\in L^2(\SurfDomain)$. The differential operators $\DivSurf$
and $\GradSurf$, the surface divergence and gradient, respectively,
need to be properly defined to follow the geometric setting of the
problem.
If the compact surface has no boundary,~\cref{eq:pde} is augmented
by the constraints of zero mean on $\Sol$ and $\force$.  If the
surface has boundary (i.e., $\SurfDomainBnd\ne\{\emptyset\}$) we
assume zero Neumann conditions, again implicitly augmented by the
zero mean constraints,
or zero Dirichlet boundary conditions. For the handling
of non-homogeneous boundary conditions we refer
to~\citep{art:Burman2018}.

\subsection{Geometrical setting}
\label{sec:intrinsic-operators}
Let $\SurfDomain\subset\REAL[3]$ be a $2$-dimensional $\Cont[k]$
regular surface. 
We recall that a surface $\SurfDomain\subset\REAL[3]$ is said to be
$\Cont[k]$ regular if for any point $\point\in\SurfDomain$
there exists a map $\MapU[\point]:\SubsetU\To\REAL[3]$ of class
$\Cont[k]$,
with $\SubsetU\subset\REAL[2]$, such that
$\MapU[\point](\SubsetU)\subset\SurfDomain$ is a neighborhood of
$\point$, i.e., there exists an open neighborhood
$\SubsetV\subset\REAL[3]$ of $\point$ for which
$\MapU[\point](\SubsetU)=\SubsetV\cap\Gamma$, and such that
$\MapU[\point]$ is a diffeomorphism of its image. The map
$\MapU[\point]$ is called a local parametrization centered in
$\point$.
The inverse of the parametrization,
$\MapU[\point]^{-1}:\SubsetV\cap\SurfDomain\To\SubsetU$,
is called a local chart in $\point$. 
Explicitly, we have the following transformations:
\begin{align*}
  \MapU[\point]\From\SubsetU&\To\SubsetV\cap\SurfDomain
  &  \MapU[\point]^{-1}\From\SubsetV\cap\SurfDomain&\To\SubsetU\\
  \hxv & \Mapsto \xv
  &  
  \xv& \Mapsto \hxv
\end{align*}
where $\hxv=(\hgcscomp)$ are the local coordinates,
$\MapU[\point](\SubsetU)$ is the coordinate
neighborhood, and $\xv=(\gcscomp)$ are the global Cartesian
coordinates of a point on the surface.
Given two points $\point,\qpoint\in\SurfDomain$ and their local
parametrizations $\MapU[\point],\MapU[\qpoint]$, with
$\SubsetU_{\point}\cap\SubsetU_{\qpoint}\ne\emptyset$, we say that the
local parametrizations are compatible if the transition map
$\MapU[\point]\circ\MapU[\qpoint]^{-1}$ is a $\Cont[k]$
diffeomorphism.
We assume to have a family $\mathcal{A}=\{\MapU_{\alpha}\}$ of
compatible local parameterizations
$\MapU_{\alpha}:\SubsetU_{\alpha}\To\SurfDomain$ such that
$\SurfDomain=\cup_{\alpha}\MapU_{\alpha}(\SubsetU_{\alpha})$ (an
atlas for $\SurfDomain$).

Given a point $\point\in\SurfDomain$, the practical construction of
the relevant objects proceeds as
follows~\citep{art:BP20,art:BFP21}. We calculate the two tangent
vectors $\hvecBaseCCcv[1](\point)$ and $\hvecBaseCCcv[2](\point)$ on
$\TanPlane{\SurfDomain}$:
\begin{equation*}
  \hvecBaseCCcv[i] (\point) = 
  \Diff[\point]{\MapU}(\vecBaseGC[j] (\point)) = 
  \left(
    \DerPar{\xcg}{\hxvcomp[i]}, \DerPar{\ycg}{\hxvcomp[i]},
    \DerPar{\zcg}{\hxvcomp[i]} 
  \right), \qquad i = 1,2 \text{ and } j=1,2,3,
\end{equation*} 
where $\Diff[\point]{\MapU}$ is the Jacobian matrix of the coordinate
transformation and $\vecBaseGC[1](\point)$, $\vecBaseGC[2](\point)$,
$\vecBaseGC[3](\point)$ are the canonical basis vectors of $\REAL[3]$.
For numerical stability, we orthogonalize via Gram-Schmidt the vector
$\hvecBaseCCcv[2]$ with respect to $\hvecBaseCCcv[1]$, yielding the
LCS orthogonal frame $\vecBaseCCcv[1],\vecBaseCCcv[2]$ on
$\TanPlane[\point]{\SurfDomain}$. We will denote by $\sv=(\lcscomp)$
the corresponding local coordinates.
We denote with $\hJac[\point]=\left[ \hvecBaseCCcv[1] (\point),
  \hvecBaseCCcv[2](\point)\right]$ the Jacobian matrix of the
coordinate transformation between $\hxv$ and $\xv$, i.e.,
$\Diff[\point]{\MapU}$, and collect the orthogonal reference
vectors into the matrix $\Jac[\point]=\left[ \vecBaseCCcv[1]
  (\point), \vecBaseCCcv[2](\point)\right]$.
Note that, $\Jac[\point]$ is the Jacobian of the (unknown) coordinate
transformation $\MapV[\point]$ between $\sv$ and $\xv$, which exists
due to the regularity of the surface.
Given the above definitions, the unknown relation between the $\hxv$
and $\sv$ coordinates can be formally written as
$\sv=\MapV[\point]^{-1}\circ\MapU[\point](\hxv)$. On the other hand,
its Jacobian is know explicitly and can be written as:
\begin{equation*}
  \matW[\point]:=\invJac[\point]\hJac[\point]\in\REAL[2\times 2]\,,
\end{equation*}
where $M^+=(M^{T}M)^{-1}M^{T}$ is the pseudoinverse of the matrix
$M\in \REAL[3\times 2]$.
\begin{remark}
  Note that, $\matW[\point]$ is related to the Gram-Schmidt
  process that transforms the reference vectors
  $\{\hvecBaseCCcv[1](\point),\hvecBaseCCcv[2](\point)\}$ into the
  orthogonal frame
  $\{\vecBaseCCcv[1](\point),\vecBaseCCcv[2](\point)\}$.
\end{remark}

In this setting, the metric tensor is diagonal and is given by:
\begin{equation}
  \label{eq:first-FF}
  \First(\point) :=
  \left(
    \begin{array}{ccc}
      \NORM{\vecBaseCCcv[1](\point)}^2 & 0 \\
      0 &\NORM{\vecBaseCCcv[2](\point)}^2\\
    \end{array}
  \right)
  =
  \left(
    \begin{array}{ccc}
      \metrTensCv{11}(\point)&     0       \\
      0             &\metrTensCv{22}(\point)\\
    \end{array}
  \right).
\end{equation}
The metric defines the surface scalar product
$\scalprodSurf{\uu}{\vv}=\metrTensCv{ij}\uu[i]\vv[j] $, and has
inverse denoted by $\First^{-1}=\{\metrTensCtrv{ij}\}$.
It is possible to show~\citep{book:Ciarlet2013} that there exist
constants $\EigFirst[*,\SurfDomain]$ and $\EigFirst[\SurfDomain]^*$
such that:
\begin{align*}
  \EigFirst[*,\SurfDomain]\NORM{\vv}^2
  &\le\scalprodSurf{\vv}{\vv}\le
    \EigFirst[\SurfDomain]^*\NORM{\vv}^2 \quad \text{for}\quad
    \vv\in\TanPlane{}{\SurfDomain}\,,
\end{align*}
where
$\EigFirst[*,\SurfDomain], \EigFirst[\SurfDomain]^*\ge 1$
are
the minimum and maximum eigenvalues of $\First$.  Moreover, we have
the following global uniform bounds on the norms and determinant of
$\First(\SurfDomain)$:
\begin{equation}\label{eq:metric-estimates}
  \EigFirstInvmin[\SurfDomain]=
  \inf_{\point\in\SurfDomain}\NORM{\First^{-1}(\point)}^2\,,
  \quad
  \EigFirstInvmax[\SurfDomain]=
  \sup_{\point\in\SurfDomain}\NORM{\First^{-1}(\point)}^2\,,
  \quad
  \CminDetG[\SurfDomain]\le\sqrt{\DET{\First(\SurfDomain)}}
  \le\CmaxDetG[\SurfDomain]\,.
\end{equation}
We will be using the symbol $\ABS{\SecondForm[\SurfDomain]}$ to denote
the supremum over $\SurfDomain$ of the norm of the second fudamental
form. This value can be related to the curvatures of $\SurfDomain$.

\paragraph{Differential operators}
Within this setting it is possible to define the relevant intrinsic
differential operators. For the intrinsic gradient of a scalar
function $\scalFun$ we have $ \GradSurf\scalFun =
\First[]^{-1}\Grad\scalFun$, where $\Grad\scalFun$ is the covariant
derivative of $\scalFun$ in the $\sv$ coordinates. We can write the
intrinsic divergence of a (contravariant) vector
$\vecFun=\vecFun[1]{}\vecBaseCCcv[1]+\vecFun[2]{}\vecBaseCCcv[2]$ as $
\DivSurf \vecFun = \Div \left( \sqrt{\DET{\First}} \vecFun
\right)/\sqrt{\DET{\First}}$.
Note that here the flux vector $\vecFun=-\DiffTens\GradSurf\scalFun$
in~\cref{eq:pde} is a vector tangent to $\SurfDomain$ for a general
(symmetric) diffusion tensor $\DiffTens$. Moreover, if
$\DiffTens=\DiffCoef\IDMat$, \cref{eq:pde} becomes the classical
Laplace-Beltrami operator, i.e.
$\DivSurf\left(\DiffTens\GradSurf\scalFun\right)=\DiffCoef\LapSurf\scalFun$,
where:
  \begin{equation*}
    \LapSurf\scalFun =
    \DivSurf\GradSurf\scalFun
    =\frac{1}{\sqrt{\metrTensCv{11}\metrTensCv{22}}}
    \left[
      \DerPar{}{\xcl}
      \left(
        \sqrt{\frac{\metrTensCv{22}}{\metrTensCv{11}}}\,
        \DerPar{\scalFun}{\xcl}
      \right)
      +
      \DerPar{}{\ycl}
      \left(
        \sqrt{\frac{\metrTensCv{11}}{\metrTensCv{22}}}\,
        \DerPar{\scalFun}{\ycl}
      \right)
    \right]
    \, .
  \end{equation*}
  The standard tools deriving from Stokes theorems hold with the
  intrinsic operators without any modification.  We first recall the
  formal definition of the integral of a function over a surface,
  which does not depend on the parametrization
  \citep{book:AbateTovena2012}:
\begin{definition}\label{def:integral}
  Let $\scalFun:\SurfDomain\to\REAL$ be a continuous
  function defined on a regular surface $\SurfDomain$ with
  parametrization given by $\MapU:\SubsetU\to\SurfDomain$.
  The integral of $\scalFun$ on $\SurfDomain$ is:
  \begin{equation*}
    \int_{\SurfDomain}\scalFun =
    \int_{\inverse{\MapU}(\SurfDomain)}(\scalFun\circ\MapU)
    \,\sqrt{\DET{\First}}\Diff\sv
    \; .
  \end{equation*}
\end{definition}
Then, the following intrinsic Green's formula holds:
\begin{equation}
  \label{eq:intrinsic-green-lemma}
  \int_{\SurfDomain}\scalprodSurf{\DiffTens\GradSurf\Sol}{\GradSurf\Test}=
  -\int_{\SurfDomain}\DivSurf\left(\DiffTens\GradSurf\Sol\right) \;\Test 
  + \int_{\SurfDomainBnd}
  \scalprodSurf{\DiffTens\GradSurf\Sol}{\conormal}\Test
  \;,
\end{equation}
where $\conormal\From\SurfDomain\To\REAL[2]$ denotes the vector
tangent to $\SurfDomain$ and normal to
$\SurfDomainBnd$ with components written with respect to the
curvilinear reference frame (i.e. $\conormal =
\mu^1\vecBaseCCcv[1] + \mu^2\vecBaseCCcv[2]$).

\subsection{Intrinsic variational formulation}

Without loss of generality we assume that $\SurfDomain$ is described
by a single (global) parametrization
$\MapU:\SubsetU\to\SurfDomain$. The ensuing results and the
definitions extend directly to the case of a surface defined by an
atlas assuming, as mentioned before, that the necessary transition
maps are smooth.

\paragraph{Function spaces}
We use standard definitions and notations for Sobolev
spaces~\citep{book:Ciarlet2013}, which can be directly extended to a
compact manifold $\SurfDomain$
(see~\citep{book:Taylor2010I,book:hebey2000}). We denote with
$\Lspace(\SurfDomain)$ and $\Sob{1}(\SurfDomain)$ the classical
Hilbert spaces on $\SurfDomain$. Explicitly:
\begin{align*}
  \Lspace(\SurfDomain)
  =\left\{\Test:\SurfDomain\to\REAL\;:\;
  \int_{\SurfDomain}\Test^2<\infty\right\}\,,\quad
  \Sob{1}(\SurfDomain)
  =\left\{\Test\in\Lspace(\SurfDomain) \;:\;
    \GradSurf\Test\in\left(\Lspace(\SurfDomain)\right)^2\right\}.
\end{align*}
Norms in $\Lspace(\SurfDomain)$ and $\Sob{1}(\SurfDomain)$ are denoted
with $\NORM[\Lspace(\SurfDomain)]{\cdot}$ and
$\NORM[\Sob{1}(\SurfDomain)]{\cdot}$, respectively, and are given by:
\begin{align*}
  \NORM[\Lspace(\SurfDomain)]{\Test}^2
  =\int_{\SurfDomain}\Test^2\quad\mbox{and}\quad
  \NORM[\Sob{1}(\SurfDomain)]{\Test}^2
  =\int_{\SurfDomain}\Test^2 + \int_{\SurfDomain}\ABSsurf{\GradSurf\Test}^2\,,
\end{align*}
where
$\ABSsurf{\GradSurf\Test}^2=\scalprodSurf{\GradSurf\Test}{\GradSurf\Test}$.
We will also use the $\Sob{2}$-seminorm given by:
\begin{equation*}
  \ABS[\Sob{2}(\SurfDomain)]{\Test}^2
  =\int_{\SurfDomain}\ABSsurf{\DerCov[2]\Test}^2\,,
\end{equation*}
where
$\ABSsurf{\DerCov[2]\Test}=\Trace{(\First^{-1}\DerCov[2]\Test)^2}$
with $\DerCov[2]\Test$ being the second covariant derivative of
$\Test$.
We note that:
\begin{equation*}
  \int_{\SurfDomain}\ABSsurf{\DerCov[2]\Test}^2
  \le {\EigFirstInvmax[\SurfDomain]}^2\left[
    \int_{\SurfDomain}\ABS{\Der[2]\Test}^2
    + C_{\MapU}^2
      \ABS{\SecondForm[\SurfDomain]}^2\int_{\SurfDomain}\ABS{\Der\Test}^2
  \right]\,,
\end{equation*}
where $C_{\MapU}$ is a generic constant depending on the parametrization,
from which, recalling that
$(\LapSurf\Test)^2=
[\Trace{\First^{-1}\DerCov[2]\Test}]^2
\ge\Trace{(\First^{-1}\DerCov[2]\Test)^2}$,
it is easy to prove that the following holds:
\begin{equation}
  \label{eq:h2seminorm-u-lap}
  c \NORM[\Lspace]{\Der[2]\Test}^2
  \le \ABS[\Sob{2}]{\Test}^2
 \le C \NORM[\Lspace]{\LapSurf\Test}^2\,.
\end{equation}
Note that the above inequalities are written without reference to the
domain as they can be set equally on the surface $\SurfDomain$ or the
chart $\MapU^{-1}(\SurfDomain)=\SubsetU$.

\paragraph{Intrinsic variational problem}
Now we can write the intrinsic variational formulation of the elliptic
equation~\eqref{eq:pde} in the LCS as:
\begin{problem}
  \label{pb:varformSurf}
  Find $\Sol\in\Sob{1}(\SurfDomain)$,
  with $\AvgSol=0$, such that:
  \begin{equation*}
    \BilinearStiff{\Sol}{\Test}
    =
    \Rhs{\Test} \quad \Forall\Test\in\Sob{1}(\SurfDomain)
    \, ,
  \end{equation*}
  where the ``stiffness'' bilinear form and the ``forcing'' linear
  form are given by:
  \begin{align*}
    \BilinearStiff{\Sol}{\Test}=
    \int_{\SurfDomain}
    \scalprodSurf{\DiffTens\GradSurf\Sol}{\GradSurf\Test}
\qquad\mbox{ and }\qquad %
    \Rhs{\Test}=\int_{\SurfDomain}\force\,\Test
    \, .
  \end{align*}
\end{problem}
Note that this is the direct extension to the surface $\SurfDomain$ of
the classical variational formulation of~\cref{eq:pde}. All the
integrals are still written in intrinsic coordinates of our LCS.

\paragraph{Poincar\'e inequality}
The dependence of the constants on the geometric characteristics of
the surface originates mainly from the use of Poincar\'e inequality,
which is given without proof in the following lemma.
\begin{lemma}\label{lemma:poincare}
  Let $\SurfDomain$ a $\Cont[1]$-regular surface without boundary and
  let $\Sol\in\Sob{1}(\SurfDomain)$ be a function with average given
  by $\AvgSol=\frac{1}{\ABS{\SurfDomain}}\int_{\SurfDomain}\Sol$. Then,
  there exists a constant $\CPoincare>0$ such that:
  \begin{equation*}
    \NORM[\Lspace(\SurfDomain)]{\Sol-\AvgSol}\le
    \CPoincare\NORM[\Lspace(\SurfDomain)]{\GradSurf\Sol}
    \qquad \Forall\;\Sol\in\Sob{1}(\SurfDomain)
  \end{equation*}
\end{lemma}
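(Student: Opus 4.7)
The plan is to prove \cref{lemma:poincare} by the standard compactness-contradiction argument, adapted to the intrinsic surface setting, the only novelty being a careful comparison between the intrinsic norms on $\SurfDomain$ and the pullback Euclidean norms on the chart domains.

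First, I would assume for contradiction that no such constant exists. Then there is a sequence $\{\Sol_n\}\subset\Sob{1}(\SurfDomain)$ with
\begin{equation*}
  \int_{\SurfDomain}\Sol_n=0,\qquad
  \NORM[\Lspace(\SurfDomain)]{\Sol_n}=1,\qquad
  \NORM[\Lspace(\SurfDomain)]{\GradSurf\Sol_n}\to 0,
\end{equation*}
so $\{\Sol_n\}$ is bounded in $\Sob{1}(\SurfDomain)$. Next I would invoke the Rellich--Kondrachov compact embedding $\Sob{1}(\SurfDomain)\hookrightarrow\hookrightarrow\Lspace(\SurfDomain)$, which is valid because $\SurfDomain$ is compact and $\Cont[1]$-regular without boundary. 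This would be established by covering $\SurfDomain$ with a finite subatlas extracted from $\mathcal{A}=\{\MapU_\alpha\}$ (finite by compactness of $\SurfDomain$), choosing a subordinate $\Cont[1]$ partition of unity $\{\rho_\alpha\}$, and applying the classical Euclidean compactness theorem to the localizations $\rho_\alpha(\Sol_n\circ\MapU_\alpha)\in\Sob{1}(\SubsetU_\alpha)$. Passing to a subsequence, $\Sol_{n_k}\to\Sol$ strongly in $\Lspace(\SurfDomain)$ and weakly in $\Sob{1}(\SurfDomain)$.

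To close the contradiction, strong $\Lspace$ convergence preserves both the normalization and the zero-mean condition, giving $\NORM[\Lspace(\SurfDomain)]{\Sol}=1$ and $\int_{\SurfDomain}\Sol=0$. Weak lower semicontinuity of the $\Lspace$-norm of the gradient yields
\begin{equation*}
  \NORM[\Lspace(\SurfDomain)]{\GradSurf\Sol}
  \le\liminf_{k\to\infty}\NORM[\Lspace(\SurfDomain)]{\GradSurf\Sol_{n_k}}=0,
\end{equation*}
hence $\GradSurf\Sol=0$ almost everywhere on $\SurfDomain$. Assuming $\SurfDomain$ is connected (otherwise the result is applied componentwise), $\Sol$ must be constant, and the zero-mean condition forces $\Sol\equiv 0$, contradicting $\NORM[\Lspace(\SurfDomain)]{\Sol}=1$.

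The main obstacle is the compactness step: one must verify that the pullback $\rho_\alpha(\Sol_n\circ\MapU_\alpha)$ lies in $\Sob{1}(\SubsetU_\alpha)$ with norms uniformly comparable, independently of $n$, to the intrinsic norms of $\Sol_n$ on $\MapU_\alpha(\SubsetU_\alpha)\subset\SurfDomain$. This is precisely where the uniform geometric bounds collected in \eqref{eq:metric-estimates}, namely $\EigFirstInvmin[\SurfDomain]$, $\EigFirstInvmax[\SurfDomain]$, $\CminDetG[\SurfDomain]$, and $\CmaxDetG[\SurfDomain]$, enter: they provide the two-sided equivalence between the intrinsic volume element $\VolForm=\sqrt{\DET{\First}}\,\Diff\sv$ (and inverse-metric inner product) and the flat Lebesgue measure on $\SubsetU_\alpha$, while the $\Cont[k]$-regularity of the parametrizations controls the pullback of the gradient. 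As an alternative, one may bypass the chart-by-chart construction altogether and cite the spectral theory of the Laplace--Beltrami operator on compact manifolds, as developed in \citep{book:hebey2000,book:Taylor2010I}: the spectrum is discrete, the eigenvalue $0$ is simple with constant eigenfunctions, and one obtains $\CPoincare=1/\sqrt{\Eig}$ where $\Eig$ is the first positive eigenvalue.
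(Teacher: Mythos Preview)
Your argument is the standard compactness--contradiction proof of the Poincar\'e inequality on a compact manifold, and it is correct as written. However, there is nothing to compare against: the paper explicitly states this lemma \emph{without proof} (the sentence introducing it reads ``which is given without proof in the following lemma''), treating it as a known background result and deferring to references such as \cite{book:hebey2000,book:Taylor2010I,book:Li2012}. The paper's only substantive contribution concerning the Poincar\'e constant comes in the subsequent lemma, where $\CPoincare^2=\Eig^{-1}$ is identified and lower bounds on $\Eig$ in terms of the Gaussian curvature and the diameter $\diam(\SurfDomain)$ are quoted from \cite{book:Li2012}. Your closing remark about the spectral route therefore aligns exactly with how the paper actually uses the result.
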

Poincar\'e inequality can be adapted to surfaces with Neumann or
Dirichlet boundaries in the usual way (see, e.g.,~\citep{art:Burman2018}).
From now on we assume for simplicity that $\AvgSol=0$.  As a
consequence, the $\Lspace$ the $\Sob{1}$ norms are equivalent, as
stated in the next straight-forward corollary.
\begin{corollary}\label{lemma:equivalent-norms}
  Let $\SurfDomain$ a $\Cont[1]$-regular surface and let
  $\Sol\in\Sob{1}(\SurfDomain)$ be a function with zero average,
  $\AvgSol=\frac{1}{\ABS{\SurfDomain}}\int_{\SurfDomain}\Sol=0$. Then,
  the following inequalities hold:
  \begin{equation*}
    \NORM[\Lspace(\SurfDomain)]{\GradSurf\Sol}\le
    \NORM[\Sob{1}(\SurfDomain)]{\Sol}\le
    \sqrt{1+\CPoincare^2}\NORM[\Lspace(\SurfDomain)]{\GradSurf\Sol}\,.
  \end{equation*}
\end{corollary}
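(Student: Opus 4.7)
The plan is to derive both inequalities directly from the definition of the $\Sob{1}(\SurfDomain)$ norm combined with the Poincar\'e inequality of \cref{lemma:poincare}, using the hypothesis that $\AvgSol = 0$ so that $\Sol - \AvgSol = \Sol$.

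First I would dispatch the lower bound. By the very definition of the $\Sob{1}(\SurfDomain)$ norm,
\begin{equation*}
  \NORM[\Sob{1}(\SurfDomain)]{\Sol}^2
  = \NORM[\Lspace(\SurfDomain)]{\Sol}^2 + \NORM[\Lspace(\SurfDomain)]{\GradSurf\Sol}^2
  \ge \NORM[\Lspace(\SurfDomain)]{\GradSurf\Sol}^2,
\end{equation*}
and taking square roots gives the left inequality immediately. No geometric information about $\SurfDomain$ is used here.

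For the upper bound, I would invoke \cref{lemma:poincare} applied to $\Sol$. Since $\AvgSol = 0$, it reads $\NORM[\Lspace(\SurfDomain)]{\Sol} \le \CPoincare \NORM[\Lspace(\SurfDomain)]{\GradSurf\Sol}$. Squaring and inserting into the definition of the $\Sob{1}(\SurfDomain)$ norm yields
\begin{equation*}
  \NORM[\Sob{1}(\SurfDomain)]{\Sol}^2
  \le \CPoincare^2 \NORM[\Lspace(\SurfDomain)]{\GradSurf\Sol}^2
      + \NORM[\Lspace(\SurfDomain)]{\GradSurf\Sol}^2
  = (1 + \CPoincare^2)\, \NORM[\Lspace(\SurfDomain)]{\GradSurf\Sol}^2,
\end{equation*}
and taking square roots produces the stated right inequality.

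There is essentially no obstacle here; the only point worth flagging is that the corollary is stated on a closed surface where \cref{lemma:poincare} applies verbatim, and on surfaces with boundary one would use the version of Poincar\'e for Neumann or Dirichlet data referenced just before the statement. In either case the geometric dependence is entirely absorbed into the single constant $\CPoincare$, consistent with the paper's programme of tracking geometric dependencies explicitly.
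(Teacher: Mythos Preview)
Your proof is correct and is exactly the straightforward argument the paper has in mind; indeed the paper does not even spell out a proof, simply calling the result a ``straight-forward corollary'' of \cref{lemma:poincare}. Your derivation via the definition of the $\Sob{1}(\SurfDomain)$ norm together with the Poincar\'e inequality (with $\AvgSol=0$) is precisely that argument.
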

Next we want to give detailed expressions of the constant $\CPoincare$
as a function of the geometric characteristics of $\SurfDomain$.  We
recall that the best Poincar\'e constant is related to the first
nonzero eigenvalue of the Laplace-Beltrami operator on
$\SurfDomain$. Indeed, it is easy to see that
$\CPoincare^2=\Eig^{-1}$.  Thus, we need to distinguish the three
cases of a compact surface with no boundary, a surface with Dirichlet
boundary $\SurfDomainBnd_D$ and a surface with Neumann boundary
$\SurfDomainBnd_N$. Typically, bounds on these eigenvalues are given
in terms of the Ricci curvature, but we note that for surfaces in
$\REAL^3$ the Ricci and the Gaussian curvature coincide up to a
positive multiplicative constant. For this reason we state everything
in terms of the latter. We can summarize these results in the
following lemma, whose proof can be found in \citep{book:Li2012}.
\begin{lemma}
  Let $\SurfDomain$ be a regular surface with Gaussian curvature
  $\GaussCurv$ bounded from below by a constant $-\Riccibound$
  ($\Riccibound>0$), and denote by $\diam(\SurfDomain)$ the longest
  geodesic distance between two points. Then, there exist two
  positive constants $C_1$ and $C_2$ such that:
    \begin{equation*}
     \label{eq:poincare-nobnd}
      \Eig^N\ge\frac{C_1}{\diam(\SurfDomain)^2}\ExpIL{-C_2
        \diam(\SurfDomain) \sqrt{\Riccibound}}\,,
      \quad\text{and}\quad
      \Eig^D\ge\frac{C_1}{\diam(\SurfDomain)^2}\ExpIL{-C_2
        (1+\diam(\SurfDomain) \sqrt{\Riccibound})}\,,
    \end{equation*}
    where $\Eig^N$ identifies the case of a compact surface without
    boundary or with Neumann boundary and $\Eig^D$ the case of
    Dirichlet boundary.
\end{lemma}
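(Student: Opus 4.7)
The plan is to follow the classical Li--Yau gradient estimate strategy, which is standard for lower bounds of the first eigenvalue under a lower Ricci bound. Recall that since $\SurfDomain \subset \REAL^3$ is two-dimensional, the Ricci tensor satisfies $\Ricci = \GaussCurv \cdot \First$, so the hypothesis $\GaussCurv \ge -\Riccibound$ immediately yields $\Ricci \ge -\Riccibound\,\First$, the form in which the Bochner technique is usually applied.

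For the Neumann/boundaryless case, I would let $\Sol$ be a normalized first nonconstant eigenfunction with $\LapSurf\Sol = -\Eig^N \Sol$ and, by rescaling, assume $\min \Sol = -1$, $\max\Sol \le 1$. The goal is a pointwise estimate of the form $\ABSsurf{\GradSurf\Sol}^2 \le (\Eig^N + \alpha)(1 - \Sol^2)$ for a constant $\alpha$ depending only on $\Riccibound$ and $\diam(\SurfDomain)$. To prove it, I would apply the Bochner--Weitzenb\"ock identity
\begin{equation*}
  \tfrac{1}{2}\LapSurf \ABSsurf{\GradSurf\Sol}^2
  = \ABSsurf{\DerCov[2]\Sol}^2
  + \scalprodSurf{\GradSurf\Sol}{\GradSurf\LapSurf\Sol}
  + \Ricci(\GradSurf\Sol,\GradSurf\Sol)
\end{equation*}
to the auxiliary function $P = \ABSsurf{\GradSurf\Sol}^2 + (\Eig^N + \alpha)\Sol^2$ (the Zhong--Yang refinement uses a slightly more delicate test function). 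At an interior maximum of $P / (1 - \Sol^2)$ one deduces an algebraic inequality in $\Eig^N$, $\Riccibound$, and the value of $P$ itself; the term $-\Riccibound\ABSsurf{\GradSurf\Sol}^2$ is the source of the $\sqrt{\Riccibound}$ appearing in the exponential.

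Once the gradient estimate is in hand, the lower bound on $\Eig^N$ follows by integrating $\ABSsurf{\GradSurf\Sol}/\sqrt{1-\Sol^2}$ along a minimizing geodesic from a point where $\Sol = -1$ to a point where $\Sol \approx 1$. The geodesic has length at most $\diam(\SurfDomain)$, and the integral of the left-hand side is bounded below by a universal constant (arcsine integral), producing
\begin{equation*}
  \sqrt{\Eig^N + \alpha(\Riccibound,\diam(\SurfDomain))}\, \diam(\SurfDomain) \ge c_0,
\end{equation*}
which rearranges to give the stated bound after absorbing the exponential dependence in $\Riccibound\,\diam(\SurfDomain)^2$ into the constants $C_1, C_2$.

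For the Dirichlet case, the same Bochner machinery runs, but the maximum principle on $P/(1-\Sol^2)$ can fail at $\SurfDomainBnd_D$ because $\Sol \equiv 0$ on the boundary. The standard remedy is to work with the function $\Sol + \epsilon \chi$ where $\chi$ is a barrier built from the distance to $\SurfDomainBnd_D$, or equivalently to compare with the Dirichlet eigenvalue on a suitable inscribed geodesic ball via monotonicity; this introduces an unavoidable extra constant factor, which is what produces the $\Exp{-C_2}$ prefactor (i.e.\ the $+1$ inside the exponent) relative to the Neumann estimate. The main technical obstacle in both cases is controlling the sign of the cross term and the Hessian contribution $\ABSsurf{\DerCov[2]\Sol}^2$ in Bochner's formula so that the maximum-principle argument closes; everything else is algebraic bookkeeping and an application of volume comparison to bound geometric constants in terms of $\diam(\SurfDomain)$ and $\Riccibound$.
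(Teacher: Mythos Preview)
The paper does not actually prove this lemma: it states the result and refers the reader to \cite{book:Li2012} for the proof. Your proposal sketches the Li--Yau gradient-estimate argument via the Bochner--Weitzenb\"ock formula, which is precisely the approach developed in that reference, so your outline is consistent with (and considerably more detailed than) what the paper itself provides.
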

In what follows we will characterize all the constants in the ensuing
inequalities by explicitly keeping track of $\CPoincare$ to quantify
the geometrical effects of the properties of the surface domain on the
error estimates.

\paragraph{Well-posedness}
We list here the classical assumptions on the continuity and
coercivity of the bilinear form and continuity of the linear form
defining the weak formulation~\ref{pb:varformSurf}.
For $\BilinearStiff{\cdot}{\cdot}$ we need the obvious hypothesis
that the diffusion tensor $\DiffTens$ be
positive definite, i.e., there exist two positive constants
$\DiffEigMin$ and $\DiffEigMax$ such that:
\begin{equation}\label{eq:diffcoercive}
  \DiffEigMin\NORM[\First]{\vv}^2
  \le\scalprodSurf{\DiffTens\vv}{\vv}\le
  \DiffEigMax\NORM[\First]{\vv}^2 \quad\mbox{ for all }\quad
  \vv\in\TanPlane{}{\SurfDomain} \quad\mbox{ and for all }\quad
  \point\in\SurfDomain\,.
\end{equation}
Then we can state the following lemma.
\begin{lemma}
  \label{lem:coer-cont}
  The bilinear form $\BilinearStiff{\cdot}{\cdot}$
  in~\cref{pb:varformSurf} is coercive and continuous, i.e., for any
  $\Sol,\Test\in\Sob{1}(\SurfDomain)$ the following inequalities hold:
  \begin{equation}\label{eq:diffusionSPD}
    \BilinearStiff{\Sol}{\Sol}\ge
    \frac{\DiffEigMin}{1+\CPoincare^2}
    \NORM[\Sob{1}(\SurfDomain)]{\Sol}^2\,,
    \qquad
    \ABS{\BilinearStiff{\Sol}{\Test}}\le
    \DiffEigMax
    \NORM[\Sob{1}(\SurfDomain)]{\Sol}\NORM[\Sob{1}(\SurfDomain)]{\Test}\,.
  \end{equation}
  Moreover, the linear form $\Rhs{\cdot}$ is continuous:
  \begin{equation*}
    \Rhs{\Test}\le
    \NORM[\Lspace(\SurfDomain)]{\force}\NORM[\Lspace(\SurfDomain)]{\Test}\,.
  \end{equation*}
\end{lemma}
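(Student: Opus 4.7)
The proof is a routine application of the assumptions combined with \cref{lemma:equivalent-norms}, but I want to organize it so that the dependence of the coercivity constant on $\CPoincare$ (and hence on the geometry of $\SurfDomain$) is made explicit, in line with the stated goal of the paper.

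For coercivity, the plan is to first use the lower bound in \cref{eq:diffcoercive} pointwise to obtain
\begin{equation*}
  \scalprodSurf{\DiffTens\GradSurf\Sol}{\GradSurf\Sol}
  \ge \DiffEigMin \ABSsurf{\GradSurf\Sol}^2,
\end{equation*}
integrate over $\SurfDomain$ against the volume form $\VolForm$ to get $\BilinearStiff{\Sol}{\Sol}\ge \DiffEigMin \NORM[\Lspace(\SurfDomain)]{\GradSurf\Sol}^2$, and then invoke the right-hand inequality of \cref{lemma:equivalent-norms} to convert the $\Lspace$-norm of $\GradSurf\Sol$ back into a full $\Sob{1}(\SurfDomain)$-norm, which yields the factor $1/(1+\CPoincare^2)$.

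For continuity of $\BilinearStiff{\cdot}{\cdot}$, I plan to exploit the symmetry and positive-definiteness of $\DiffTens$: since $(\uu,\vv)\mapsto\scalprodSurf{\DiffTens\uu}{\vv}$ defines a symmetric positive semidefinite form on each tangent plane, Cauchy--Schwarz for that form combined with the upper bound in \cref{eq:diffcoercive} gives
\begin{equation*}
  \ABS{\scalprodSurf{\DiffTens\GradSurf\Sol}{\GradSurf\Test}}
  \le \scalprodSurf{\DiffTens\GradSurf\Sol}{\GradSurf\Sol}^{1/2}
      \scalprodSurf{\DiffTens\GradSurf\Test}{\GradSurf\Test}^{1/2}
  \le \DiffEigMax \ABSsurf{\GradSurf\Sol}\ABSsurf{\GradSurf\Test}.
\end{equation*}
Integrating and applying the standard Cauchy--Schwarz inequality in $\Lspace(\SurfDomain)$ produces $\DiffEigMax \NORM[\Lspace(\SurfDomain)]{\GradSurf\Sol}\NORM[\Lspace(\SurfDomain)]{\GradSurf\Test}$, and the left-hand inequality of \cref{lemma:equivalent-norms} (which does not require zero average) bounds each factor by the corresponding $\Sob{1}(\SurfDomain)$-norm.

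Continuity of $\Rhs{\cdot}$ follows immediately from Cauchy--Schwarz in $\Lspace(\SurfDomain)$ applied to $\int_{\SurfDomain}\force\,\Test\,\VolForm$, using that $\force\in\Lspace(\SurfDomain)$ by assumption. There is no genuine obstacle in any of the three parts; the only point requiring care is to make sure that the continuity estimate uses the pointwise Cauchy--Schwarz for the $\DiffTens$-form rather than a bound on the operator norm of $\DiffTens$ alone, since the latter would introduce an additional factor involving the anisotropy of $\First$ that \cref{eq:diffusionSPD} does not contain.
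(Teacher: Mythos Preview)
Your proposal is correct and follows essentially the same approach as the paper: coercivity via the lower bound in \cref{eq:diffcoercive} followed by \cref{lemma:equivalent-norms}, continuity via Cauchy--Schwarz together with the upper bound in \cref{eq:diffcoercive}, and continuity of $\Rhs{\cdot}$ by Cauchy--Schwarz in $\Lspace(\SurfDomain)$. The only cosmetic difference is that for continuity the paper first applies Cauchy--Schwarz in $\Lspace(\SurfDomain)$ to obtain $\NORM[\Lspace(\SurfDomain)]{\DiffTens\GradSurf\Sol}\NORM[\Lspace(\SurfDomain)]{\GradSurf\Test}$ and then bounds the first factor by $\DiffEigMax\NORM[\Lspace(\SurfDomain)]{\GradSurf\Sol}$, whereas you apply the pointwise Cauchy--Schwarz for the $\DiffTens$-form first; your route is slightly more self-contained since it uses only the quadratic-form bound \cref{eq:diffcoercive} and does not implicitly appeal to $\First$-symmetry of $\DiffTens$ to pass from the quadratic-form bound to an operator-norm bound.
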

\begin{proof}
  The proof is a standard application of the equivalence between the
  $\Lspace$ and $\Sob{1}$ norms
  in~\cref{lemma:equivalent-norms} and the inequalities in
  \cref{eq:diffcoercive}:
  \begin{equation*}
    \BilinearStiff{\Sol}{\Sol}
    =
    \int_{\SurfDomain}\scalprodSurf{\DiffTens\GradSurf\Sol}{\GradSurf\Sol}
    \ge
    \DiffEigMin
    \NORM[\Lspace(\SurfDomain)]{\GradSurf\Sol}^2
    \ge
    \frac{\DiffEigMin}{1+\CPoincare^2}\NORM[\Sob{1}(\SurfDomain)]{\Sol}^2\,.
  \end{equation*}
  For the continuity we write:
  \begin{equation*}
    \ABS{\BilinearStiff{\Sol}{\Test}}
    \le
    \NORM[\Lspace(\SurfDomain)]{\DiffTens\GradSurf\Sol}
    \NORM[\Lspace(\SurfDomain)]{\GradSurf\Test}
    \le
    \DiffEigMax
    \NORM[\Sob{1}(\SurfDomain)]{\Sol}
    \NORM[\Sob{1}(\SurfDomain)]{\Test}\, .
  \end{equation*}
  The continuity of $\Rhs{\cdot}$ is simply an application of the
  Cauchy-Schwarz inequality.
\end{proof}
Under the above assumptions, the Lax-Milgram theorem holds:
\begin{lemma}[Lax-Milgram theorem]
  Let $\TestSpace$ be a Hilbert space and
  $\BilinearStiffSymbol:\TestSpace\times\TestSpace\To\REAL$ be a
  continuous and coercive bilinear form. For all continuous linear
  forms $\RhsSymbol:\TestSpace\To\REAL$ there exists a unique function
  $\Sol\in\TestSpace$ such that:
  \begin{equation*}
    \BilinearStiff{\Sol}{\Test}=\Rhs{\Test}
    \qquad \Forall\;\Test\in\TestSpace\,.
  \end{equation*}
\end{lemma}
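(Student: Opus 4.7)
The plan is to follow the standard Hilbert-space argument that reduces the variational equation to an operator equation and solves the latter by contraction. First I would invoke the Riesz representation theorem twice. Since $\RhsSymbol$ is continuous and linear on $\TestSpace$, there exists a unique $\scalFun\in\TestSpace$ with $\Rhs{\Test}=\scalprod{\scalFun}{\Test}$ for every $\Test\in\TestSpace$. For each fixed $\Sol\in\TestSpace$, the map $\Test\mapsto\BilinearStiff{\Sol}{\Test}$ is continuous and linear, so Riesz also produces a unique element $A\Sol\in\TestSpace$ with $\BilinearStiff{\Sol}{\Test}=\scalprod{A\Sol}{\Test}$. Bilinearity of $\BilinearStiffSymbol$ makes $A$ linear, and continuity with constant $M$ (the continuity constant of $a$) yields $\NORM{A\Sol}\le M\NORM{\Sol}$, so $A\in\mathcal{L}(\TestSpace,\TestSpace)$.

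Next I would recast the problem as the operator equation $A\Sol=\scalFun$. For a parameter $\rho>0$ to be chosen, define $T_\rho\From\TestSpace\To\TestSpace$ by
\begin{equation*}
  T_\rho(\Sol)=\Sol-\rho\bigl(A\Sol-\scalFun\bigr).
\end{equation*}
A point $\Sol$ is a fixed point of $T_\rho$ iff $A\Sol=\scalFun$, iff $\BilinearStiff{\Sol}{\Test}=\Rhs{\Test}$ for every $\Test$.

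The key step is to choose $\rho$ so that $T_\rho$ is a strict contraction. For any $\Sol,\TestW\in\TestSpace$, setting $\wwsymb=\Sol-\TestW$ and expanding the squared norm gives
\begin{equation*}
  \NORM{T_\rho\Sol-T_\rho\TestW}^2
  =\NORM{\wwsymb}^2-2\rho\,\scalprod{A\wwsymb}{\wwsymb}+\rho^2\NORM{A\wwsymb}^2.
\end{equation*}
Using coercivity (with constant $\alpha>0$), $\scalprod{A\wwsymb}{\wwsymb}=\BilinearStiff{\wwsymb}{\wwsymb}\ge\alpha\NORM{\wwsymb}^2$, together with $\NORM{A\wwsymb}^2\le M^2\NORM{\wwsymb}^2$, yields
\begin{equation*}
  \NORM{T_\rho\Sol-T_\rho\TestW}^2\le\bigl(1-2\rho\alpha+\rho^2 M^2\bigr)\NORM{\Sol-\TestW}^2.
\end{equation*}
Choosing any $\rho\in\bigl(0,2\alpha/M^2\bigr)$ makes the factor strictly less than $1$, so $T_\rho$ is a strict contraction on the complete metric space $\TestSpace$. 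Banach's fixed-point theorem then supplies existence of a unique fixed point $\Sol\in\TestSpace$, which is the desired solution. Uniqueness in fact follows more directly from coercivity alone: if $\Sol_1,\Sol_2$ are two solutions, then $\BilinearStiff{\Sol_1-\Sol_2}{\Test}=0$ for all $\Test$, and testing with $\Test=\Sol_1-\Sol_2$ gives $\alpha\NORM{\Sol_1-\Sol_2}^2\le 0$.

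Honestly the argument is entirely standard, so there is no real conceptual obstacle; the only point requiring care is the bookkeeping in the contraction estimate, specifically verifying that the interval $\bigl(0,2\alpha/M^2\bigr)$ is nonempty (which is automatic since $\alpha,M>0$) and that the coercivity and continuity constants used are consistent with those defined in \cref{lem:coer-cont}, namely $\alpha=\DiffEigMin/(1+\CPoincare^2)$ and $M=\DiffEigMax$. If desired, one could avoid the Banach fixed-point step altogether by showing that $A$ has closed range (from coercivity) and that $(\mathrm{Range}\,A)^\perp=\{0\}$ (again from coercivity), concluding surjectivity; I find the contraction route cleaner.
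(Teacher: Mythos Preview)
Your argument is correct and is the standard textbook proof of the Lax--Milgram theorem via Riesz representation and a contraction-mapping argument. The paper, however, does not prove this lemma at all: it is stated as the classical Lax--Milgram theorem and invoked as a known result, so there is no ``paper's own proof'' to compare against. Your write-up would serve perfectly well as a self-contained proof; the identification of the specific constants $\alpha=\DiffEigMin/(1+\CPoincare^2)$ and $M=\DiffEigMax$ from \cref{lem:coer-cont} is a nice touch tying the abstract statement to the concrete setting, though strictly speaking the lemma as stated is purely abstract and does not require it.
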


\subsection{Intrinsic finite element method}
\label{sec:isfem}
 We recall that our guiding principle that justifies certain choices
 is to maintain the scheme as intrinsic as possible, i.e., we want to
 use only intrinsic geometric quantities and minimize the use of the
 surface embedding.
\paragraph{The surface triangulation}
Let
$\TriangH(\SurfDomain)=\cup_{i=1}^{\NCell}\Cell[i]=\Closure{\SurfDomain}$
be a geodesic surface triangulation of $\SurfDomain$, formed by
the union of non-intersecting surface triangles. 
We denote by $\SurfDomain[\meshparam]$ or
  $\TriangH(\SurfDomain[\meshparam])$ the
piecewise linear interpolation of $\SurfDomain$, i.e., the union of
2-simplices in $\REAL[3]$ having the same vertices of
$\TriangH(\SurfDomain)$ and characterized by the mesh parameter
$\meshparam$, the length of the longest chord between two triangle
vertices in $\TriangH(\SurfDomain)$.
We assume that $\TriangH(\SurfDomain[\meshparam])$ is shape-regular,
i.e., there exists a  constant $c>0$ independent of
$\meshparam$ such that $ \Inradius[\Cell]/\meshparam[\Cell]\ge c$
for all $\CellH\in\TriangH(\SurfDomain[\meshparam])$,
where $\Inradius[\Cell]$ is the radius of the circle inscribed in
$\CellH$ and $\meshparam[\Cell]$ is the longest side of $\CellH$.
By assumption, $\TriangH(\SurfDomain[\meshparam])$ is a closely
inscribed triangulation in the sense of~\citep{book:Morvan2008}, or
equivalently, in the sense of~\citep{art:Dziuk2013}, which means that
$\TriangH(\SurfDomain[\meshparam])\subset\Neigh[\delta]$, where
$\Neigh[\delta]$ is a tubular neighborhood of $\SurfDomain$ of radius
$\delta$ such that every point $\point\in\Neigh[\delta]$ has a unique
orthogonal projection onto $\SurfDomain$. As a consequence, for every
flat cell $\CellH\subset\SurfDomain[\meshparam]$ there corresponds a
unique curved cell $\Cell\subset\SurfDomain$, and this correspondence
is bijective. Note that, the assumption of $\TriangH(\SurfDomain)$
being geodesic is needed only to ensure the uniqueness of this
correspondence.
Finally, we introduce our working local chart
$\SubsetU[\meshparam]\subset\SubsetU$ and note that there exists an
affine transformation $\MapLin$ such that
$\CellH=\MapLin(\SubsetU[\meshparam])$.

\paragraph{Intrinsic spatial discretization}
Formally, we would like to work with the finite-dimensional
$\PONE$-conforming FEM space
$\TestSpaceApprox\subset\Sob{1}(\SurfDomain)$ given by:
\begin{equation}\label{eq:testspacediscr-1}
  \TestSpaceApprox=\{\TestApprox\in\Cont(\Triang(\SurfDomain))
  \text{ such that }
  \TestApprox|_{\scriptscriptstyle\Cell}\in\PONE(\Cell) \quad \Forall
  \;\Cell\in\Triang(\SurfDomain)\} \,.
\end{equation}
However, this definition is meaningless since we do not know how to
define $\PONE(\Cell)$, namely the space of first order polynomials in
$\Cell$. One way to circumvent this problem is proposed in
\citep{art:Sander2010,art:Sander2012}, where surface barycentric
coordinates are used to generalize the classical linear interpolation
on the surface. In this case the basis functions are nonlinear and
their calculation requires the solution of local cell-wise quadratic
minimization problems. Another approach is used in \citep{art:BMP21},
where the equations, the variational formulation, and the VEM bilinear
forms are defined on the chart. In this case, two-dimensional
Lagrangian linear basis functions can be defined in the usual way
directly on the chart and can be used to evaluate the needed
integrals. This approach requires complete knowledge of the surface
parametrization, since a triangulation of the chart is needed.

Maintaining the goal of ISFEM to make use of the surface embedding as
little as possible, the following strategy for the definition of the
basis functions can be devised.
Working on an element-by-element basis as typical of FE methods, for
each surface element $\Cell$ we consider the associated reference
element $\SubsetU[\meshparam]=\MapU[\meshparam]^{-1}(\Cell)$ 
eventually up to an affine transformation.
Here, we require $\MapU[\meshparam]$ to be a Monge
parametrization, i.e.,
$\MapU[\meshparam]=(\hgcscomp,\height(\hgcscomp))$, which always
exists locally because $\SurfDomain$ is regular.
On $\SubsetU[\meshparam]$ using the $\hxv$ coordinates,
we can define the standard local $\PONE$ basis functions
$\hnodalBasis{j}^{\scriptscriptstyle\Cell}(\hxv)$.
By means of the composition with the local parametrization
$\MapU[\meshparam]$, we can formally define our local (nonlinear)
basis function $\nodalBasis{j}^{\scriptscriptstyle\Cell}$ on $\Cell$
such that
$\nodalBasis{j}^{\scriptscriptstyle\Cell}(\xv):=\hnodalBasis{j}^{\scriptscriptstyle\Cell}
\circ\MapU[\meshparam]^{-1}(\xv)$.
Note that, on one hand we need to know the values of the basis
functions and their derivatives only at the quadrature points
$\rpoint_{i}\in \Cell$, on the other hand we want to use minimal information
about the parametrization of the surface.
In the case of $\PONE$ basis and trapezoidal quadrature rule, the
quadrature points are the triangle vertices on both $\Cell$ and
$\SubsetU[\meshparam]$, so that the basis function values are directly the nodal values
$\nodalBasis{j}^{\scriptscriptstyle\Cell}(\point[i])=\delta_{ij},\;
i,j=1,2,3$.
For the gradients, we need to perform a
transformation to represent them in the orthogonal local reference frame
$\{\vecBaseCCcv[1],\vecBaseCCcv[2]\}\in\TanPlane[\rpoint_i]{\Cell}$
at each quadrature point. 
Practical computation of the surface gradient at the point
$\rpoint_{i}$ yields:
\begin{equation}\label{eq:grad-with-W}
  \GradSurf\nodalBasis{j}^{\scriptscriptstyle\Cell} =
  \First[\rpoint_{i}]^{-1}\invJac[\rpoint_{i}]\hJac[\rpoint_{i}]
  \Grad\hnodalBasis{j}^{\scriptscriptstyle\Cell}  =\First[\rpoint_{i}]^{-1}\matW[\rpoint_{i}]\Grad\hnodalBasis{j}^{\scriptscriptstyle\Cell}\,.
\end{equation}
\begin{remark}
  In case the height function $\height$ is known, the Jacobian
  $\hJac[\rpoint_{i}]$ takes on the following
  form:
  \begin{equation*}
    \hJac[\rpoint_{i}]=[\hvecBaseCCcv[1](\rpoint_{i}),\hvecBaseCCcv[2](\rpoint_{i})]=
    \begin{pmatrix}
      1 & 0 \\
      0 & 1 \\
      \DerPar{\height}{\hxcg} & \DerPar{\height}{\hycg}
    \end{pmatrix}\,,
  \end{equation*}
  while if $\height$ is
  not known, we can recover $\hJac[\rpoint_i]$ from the input tangent
  vectors by projection. The Jacobian $\Jac[\rpoint_i]$ is given by
  $\Jac[\rpoint_i]=\left[ \vecBaseCCcv[1]
  (\rpoint_i), \vecBaseCCcv[2](\rpoint_i)\right]$, with $ \vecBaseCCcv[1]
  , \vecBaseCCcv[2]$ defined from $ \hvecBaseCCcv[1],
  \hvecBaseCCcv[2]$ by orthogonalization.
\end{remark}
\begin{remark}
  We would like to stress here that
  $\Grad\hat{f}$ is the gradient of $\hat{f}$ on the reference
  element calculated with respect to the
  $\hxv$ coordinates.
  On the other hand,  $\matW[\rpoint_{i}]\Grad\hat{f}$  is the same
  gradient evaluated with respect to the $\sv$ coordintes, which may
  not be necessarily orthogonal in $\SubsetU[\meshparam]$ but 
  correspond to orthogonal reference vectors in
  $\TanPlane[\rpoint_{i}]{\Cell}$. 
\end{remark}
\begin{remark}
 The ISFEM $\PONE$ basis functions described in~\cite{art:BFP21} are
 equivalent to the ISFEM basis functions defined above. Indeed, if we
 consider $\CellH$ as our reference element $\SubsetU[\meshparam]$,
 then we can write
 $\tilde{\nodalBasis{j}}^{\scriptscriptstyle\Cell}=\hnodalBasis{j}^{\scriptscriptstyle\Cell}\circ\MapLin$
 as a function in $\REAL[3]$.
 Moreover, ISFEM recovers the Surface FEM approach described
 in~\citep{art:Dziuk2013} if $\Cell=\CellH$.
  \end{remark}
The last step in the definition of our global basis functions
$\nodalBasis{k}$, $k=1,\ldots,\nNodes$, is to glue together as usual
the elemental components. Note that, because of tangent
  planes at vertices are defined uniquely, the resulting global basis
  functions are obviously conforming, albeit known only at the
  vertices.
In conclusion, every
function $\TestApprox$ in the functional space $\TestSpaceApprox$
can be written as:
\begin{equation}
  \label{eq:basis}
  \TestApprox =
  \Interpolant(\TestApprox)
  =\sum_{k=1}^{\nNodes}\coeffTest{k}\nodalBasis{k}\,,
\end{equation}
where $\Interpolant(\TestApprox)$ indicates the ISFEM interpolant of
$\TestApprox$, and $\coeffTest{k}$ are the nodal coefficients.
Hence, the intrinsic FEM variational formulation can be written in the
LCS as:
\begin{problem}
  \label{pb:intrinsic-discrete}
  Find $\SolApprox\in\TestSpaceApprox$ such that
  \begin{equation*}
    \BilinearStiff{\SolApprox}{\TestApprox}=
    \Rhs{\TestApprox} \quad \Forall\TestApprox\in\TestSpaceApprox
  \, ,
  \end{equation*}
  where the linear and bilinear forms are given by:
  \begin{align*}
    \BilinearStiff{\SolApprox}{\TestApprox}=
    \int_{\SurfDomain}
    \scalprodSurf{\DiffTens\GradSurf\SolApprox}{\GradSurf\TestApprox}
    \qquad   \mbox{ and } \qquad
  \Rhs{\TestApprox}=\int_{\SurfDomain}\forceApprox\,\TestApprox
  \, .
  \end{align*}
\end{problem}
\paragraph{Surface quadrature rules}

Up to the definition of the ISFEM space $\TestSpaceApprox$, no
numerical approximations are done until this point, since all the
operators and integrals are defined on $\Triang(\SurfDomain)$ whose
interior coincides with the surface $\SurfDomain$. We would like to
remain within this setting as much as possible. Approximation issues
arise when we need to practically compute quantities.  To this aim, we
assume that all the relevant geometric information related to the
surface are known in exact or approximate (but consistent) form at the
vertices of the triangulation and proceed by defining appropriate
quadrature rules.
In order to maintain optimal second order accuracy we need to provide
quadrature rules whose error is locally proportional to
$\meshparam[\Cell]^2$.  Thus we can consider surface extensions of the
trapezoidal and the mid-point quadrature rules for triangles, as
developed in~\citep{art:Georg1994}, as modified and effectively used
in~\citep{art:BP20,art:BFP21}. In this work we consider the
trapezoidal rule given by:
\begin{align}
  \label{eq:trap_rule}
  \int_{\Cell}\scalFun
  \approx \; \QuadRule[\meshparam]{\scalFun}=\frac{1}{3}\sum_{j=1}^3
  \scalFun(\point[j])\CellHArea     
  \,,
\end{align}
where $\CellHArea$ is the cell area and $\scalFun(\point[j])$ are the
evaluation of the function $\scalFun$ at the cell nodes. We note that
the above quadrature rule uses known information at the vertices of
$\Triang(\SurfDomain)$, and thus does not require interpolation as the
midpoint would.

\paragraph{Discrete norms}
We will be using the discrete grid norm $\NORM[\meshparam]{\scalFun}$
of a function $\scalFun\in\TestSpaceApprox$ defined as:
\begin{equation}\label{eq:discrete-norm}
  \NORM[\meshparam]{\scalFun}^2=
  \sum_{\Cell\in\TriangH(\SurfDomain)}\frac{\CellHArea}{3}
  \NORM[\meshparam,\Cell]{\scalFun}^2
  =
  \sum_{\Cell\in\TriangH(\SurfDomain)}\frac{\CellHArea}{3}
  \sum_{j=1}^{3}\scalFun(\point[j])^2
  =\NORM[\meshparam]{\scalFun_{\meshparam}}^2\, ,    
\end{equation}
where
$\scalFun_{\meshparam}=\{\scalFun_k\}_1^{\nNodes}=\{\scalFun(\point[k])\}_1^{\nNodes}$
is the vector of coefficients of the linear combination on the
basis of $\TestSpaceApprox$.
This norm is equivalent to the $\Lspace$-norm and, as a consequence,
to the $\Sob{1}$-norm. In fact, we can write:
\begin{equation*}
  \NORM[\Lspace(\SurfDomain)]{\scalFun}^2
  =\int_{\SurfDomain}\scalFun^2=
  \sum_{\Cell\in\TriangH(\SurfDomain)}
  \int_{\Cell}\left(\sum_{j=1}^{3}\scalFun(\point[j])\nodalBasis{j}^{\scriptscriptstyle\Cell}\right)^2
  =
  \scalprod{\scalFun_{\meshparam}}{\matrixMass\scalFun_{\meshparam}}\, ,
\end{equation*}
The last scalar product can be controlled on both sides by the
eigenvalues of the mass matrix $\matrixMass$ to yield:
\begin{equation}\label{eq:discrete-norm-prop}
  \frac{\CminDetG}{4}\NORM[\meshparam]{\scalFun_{\meshparam}}^2
  \le
  \NORM[\Lspace(\SurfDomain)]{\scalFun}^2
  \le
  \CmaxDetG\NORM[\meshparam]{\scalFun_{\meshparam}}^2 \,.
\end{equation}

\paragraph{ISFEM formulation}
Now all the ingredients of the ISFEM formulation are completed and we
can write:
\begin{problem}[ISFEM formulation]
  \label{pb:final-isfem}
  Find $\SolApprox\in\TestSpaceApprox$ such that
  \begin{equation*}
    \BilinearStiffApprox{\SolApprox}{\TestApprox}=
    \RhsApprox{\TestApprox} \quad \Forall\TestApprox\in\TestSpaceApprox
  \, ,
  \end{equation*}
  where the linear and bilinear forms are given by:
  \begin{align*}
    \BilinearStiffApprox{\SolApprox}{\TestApprox}=
    \sum_{\Cell\in\Triang(\SurfDomain)}
    \frac{\CellHArea}{3}\sum_{j=1}^3
    \scalprodSurf{\DiffTens(\point[j])\GradSurf\SolApprox(\point[j])}%
                 {\GradSurf\TestApprox(\point[j])}\,,
  \end{align*}
  and
  \begin{align*}
    \RhsApprox{\TestApprox}=
    \sum_{\Cell\in\Triang(\SurfDomain)}\frac{\CellHArea}{3}
    \sum_{j=1}^3\forceApprox(\point[j])\,\TestApprox(\point[j])
    \, .
  \end{align*}
\end{problem}

\section{Numerical analysis of ISFEM}

In this section we provide $\Sob{1}$ estimates showing that the ISFEM
achieves optimal convergence.
 The standard FEM theory is adapted to the intrinsic setting, with
 special attention to the analysis of the influence on convergence
 errors of surface geometric characteristics, such as, e.g., metric
 tensor and curvatures.  For this purpose we will introduce in our
 analysis different constants.
The symbol $C$ will denote a generic
constant not depending on $\meshparam[\Cell]$ nor on surface
properties. The symbol
$\ConstGeom[i]$ will be used to identify constants that
  are independent of $\meshparam[\Cell]$ but
  depend upon different surface geometric
quantities. When working on single elements, we will use the symbol $\ConstGeom[i,\Cell]$ to denote the
$i$-th constant defined on $\Cell$.

As usual in FEM theory, this effort will be divided in
  two parts. First, the local analysis in \cref{subsec:approx} will
  develop approximation and interpolation errors on a single
  triangle. Then, these local results will be combined in
  \cref{subsec:conv} to yield the final estimates on the full
  surface.

\subsection{Approximation errors on triangles}\label{subsec:approx}

We start by summarizing some known results
on approximation errors arising from the substitution of $\Cell$ with
$\CellH$~\cite{book:Morvan2008,art:Dziuk2013}. Then, interpolation and
quadrature error estimates are addressed.

  \subsubsection{Surface approximation errors}  
Given a point $\qpoint\in\CellH$, we denote by $\Prm(\qpoint)\in\Cell$
the orthogonal projection of $\qpoint$ onto $\Cell$ along the
direction $\normalSurf(\Prm(\qpoint))$, normal to the surface in
$\Prm(\qpoint)$.
We state here some results related to the approximation of surface
triangles, which can be easily extended to the entire surface. The
proofs can be found in \citep[lemma 4.1]{art:Dziuk2013}.
\begin{lemma}
  Given $\CellH$, $\Cell$ and the
  projection map $\Prm$, the following estimates hold:
  \begin{itemize}
    \item the distance between the approximate triangulation and the
      surface satisfies: 
      \begin{equation*}
        \max_{\qpoint\in\CellH}\ABS{\overrightarrow{\Prm(\qpoint)\qpoint}}
        \le C \meshparam[\Cell]^2 \,;
      \end{equation*}
    \item the ratio $\delta_{\meshparam}$ between the area measures
      $\Diff\sv$ and $\Diff\xv$ of the surface triangle $\Cell$ and
      its approximation $\CellH$, defined by
      $\Diff\sv=\delta_{\meshparam}\Diff\xv$, satisfies:
      \begin{equation*}
        \NORM[L^{\infty}]{1-\delta_{\meshparam}}
        \le C \meshparam[\Cell]^2 \,.
      \end{equation*}
    \end{itemize}
\end{lemma}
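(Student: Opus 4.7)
The plan is to prove both estimates by local Taylor expansion of $\Cell$ near a vertex, exploiting the boundedness of the second fundamental form $\SecondForm[\SurfDomain]$. Since this is the classical Dziuk--Elliott argument, I would follow the outline of \cite{art:Dziuk2013}, but adapting the notation to the intrinsic formalism used here.

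\textbf{Setup.} Fix a vertex $\point_0\in\Cell\cap\CellH$ and introduce Cartesian coordinates in $\REAL^3$ whose origin is $\point_0$ and whose third axis is aligned with $\normalSurf(\point_0)$. In these coordinates the tangent plane $\TanPlane[\point_0]{\SurfDomain}$ is horizontal, and since $\SurfDomain$ is $\Cont[2]$-regular there is a local parametrization of $\Cell$ as the graph $\zcg=f(\xcg,\ycg)$ with $f(0)=0$, $\Grad f(0)=0$, and $\NORM{\Hess f}\le C\,\ABS{\SecondForm[\SurfDomain]}$ on a neighborhood containing $\Cell$. Taylor expansion then yields $|f(\xcg,\ycg)|\le C\,\ABS{\SecondForm[\SurfDomain]}(\xcg^2+\ycg^2)$ and $|\Grad f(\xcg,\ycg)|\le C\,\ABS{\SecondForm[\SurfDomain]}\sqrt{\xcg^2+\ycg^2}$, both of order $\meshparam[\Cell]^2$ and $\meshparam[\Cell]$ respectively within $\Cell$.

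\textbf{First estimate.} Any $\qpoint\in\CellH$ is a convex combination of the three vertices of $\CellH$, which all lie on $\Cell$ and therefore have $\zcg$-coordinate of size $O(\meshparam[\Cell]^2)$ in the chart above. Hence the vertical offset between $\qpoint$ and the graph point with the same $(\xcg,\ycg)$-projection is $O(\meshparam[\Cell]^2)$, uniformly over $\CellH$. Because $\CellH\subset\Neigh[\delta]$, the normal projection $\Prm(\qpoint)$ is well-defined and differs from this graph point by lower-order terms, giving $\ABS{\overrightarrow{\Prm(\qpoint)\qpoint}}\le C\,\ABS{\SecondForm[\SurfDomain]}\meshparam[\Cell]^2$. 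Absorbing $\ABS{\SecondForm[\SurfDomain]}$ into the constant $C$ (since it is a fixed surface property) gives the claimed bound.

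\textbf{Second estimate.} Parametrize $\Cell$ over $\CellH$ by the inverse projection $\Prm^{-1}:\CellH\to\Cell$. In the local chart, $\Cell$ has area element $\sqrt{1+\ABS{\Grad f}^2}\,\Diff\xcg\Diff\ycg$, while $\CellH$ has constant unit normal $\normalvec_{\CellH}$ which, by the first estimate applied to the vertices, satisfies $\normalvec_{\CellH}\cdot\normalSurf(\point_0)=1+O(\meshparam[\Cell]^2)$ (the deviation of $\normalvec_{\CellH}$ from vertical is $O(\meshparam[\Cell])$ in norm, but its inner product with the vertical direction deviates from $1$ only quadratically). Writing the Jacobian of $\Prm^{-1}$ in terms of these normals and of $\Grad f$, one obtains
\begin{equation*}
  \delta_\meshparam
  =\frac{\sqrt{1+\ABS{\Grad f}^2}}{\normalvec_{\CellH}\cdot\normalSurf(\Prm(\qpoint))}
  =1+O(\meshparam[\Cell]^2),
\end{equation*}
where both the numerator and denominator differ from $1$ only by $O(\meshparam[\Cell]^2)$. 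Taking the supremum over $\qpoint\in\CellH$ yields the stated $L^\infty$ bound.

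\textbf{Main obstacle.} The delicate part is the second estimate: a priori the tilt of $\normalvec_{\CellH}$ relative to $\normalSurf$ is only $O(\meshparam[\Cell])$, and a naive bound on the Jacobian would give only a linear order of convergence. The key observation is that $\normalvec_{\CellH}$ and $\normalSurf$ have unit length, so their inner product differs from $1$ by the square of the tilt angle, producing the quadratic bound. Handling this cleanly together with the chart change between $\CellH$ and $\Cell$ is the only nontrivial bookkeeping; the rest is a routine Taylor expansion.
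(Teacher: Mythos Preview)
Your proposal is correct and follows exactly the approach the paper defers to: the paper does not give its own proof of this lemma but simply states that ``the proofs can be found in \cite[lemma~4.1]{art:Dziuk2013}'', and what you have written is precisely the Dziuk--Elliott argument spelled out in the intrinsic notation of the present paper. Your identification of the only subtle point---that the $O(\meshparam[\Cell])$ tilt of $\normalvec_{\CellH}$ relative to $\normalSurf$ yields only an $O(\meshparam[\Cell]^2)$ deviation of their inner product from~$1$ because both are unit vectors---is the crux of the second estimate and is handled correctly.
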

For any point $\qpoint\in\CellH$, we define the relative curvature of
$\CellH$ with respect to $\SurfDomain$ in $\qpoint$ as follows.
\begin{definition}
  Given $\SubsetW\subset\CellH$, the relative curvature
  $\omega_{\SurfDomain}(\qpoint)$ of any point $\qpoint\in\SubsetW$
  with respect to $\SurfDomain$ is
  \begin{equation*}
    \omega_{\SurfDomain}(\qpoint)=
    \ABS{\overrightarrow{\Prm(\qpoint)\qpoint}}
    \ABS{\SecondForm[\Prm(\qpoint)]}\;.
  \end{equation*}
  Then, the relative curvature of $\SubsetW$ is
  $\omega_{\SurfDomain}(\SubsetW)=
  \sup_{\qpoint\in\SubsetW}\omega_{\SurfDomain}(\qpoint)$.
\end{definition}

With reference to \citep{book:Morvan2008}, we can state the
following surface approximation results.
\begin{lemma}\label{prop:morvan}
  Given a geodesic triangulation $\TriangH(\SurfDomain)$ with surface
  triangles $\Cell$ and geodesic edges $\Edge$, and their
  approximations $\CellH$ and $\EdgeH$ in
  $\TriangH(\SurfDomain[\meshparam])$, the following results hold.
  \begin{enumerate}
\item \label{prop:morvan-1}
  The curvilinear length $\edgeLength$ of edge $\Edge$ is
  related to the Euclidean length $\edgeHLength$ of the chord
  $\EdgeH$ via the
  inequalities:
  \begin{equation*}
    \edgeHLength\le \edgeLength
    \le\frac{1}{1-\omega_{\SurfDomain}(\EdgeH)}\edgeHLength\;,
  \end{equation*}
  where $\omega_{\SurfDomain}(\EdgeH)$ is the relative curvature of
  $\EdgeH$ with respect to $\SurfDomain$.
\item \label{prop:morvan-2}
  The difference between the unit vector
  $\vv_{_{\overrightarrow{\point\qpoint}}}$ aligned to the chord
  $\EdgeH$ and the unit tangent vector $\vecBaseCCcv[\point]$ to the
  geodesic edge at $\point$ satisfies:
  \begin{equation*}
    \ABS{\vv_{_{\overrightarrow{\point\qpoint}}}-\vecBaseCCcv[\point]}
    \leq\frac{1}{2}\ABS{\SecondForm[\Cell]}\edgeLength \; .
  \end{equation*}
\item \label{prop:morvan-3}
  The surface area of the cell $\Cell$ is related to the planar
  area of $\CellH$ by the relation:
  \begin{equation*}
    \ABS{\CellArea- \CellHArea}\le
    C_{\Cell} \left(\DevAngle[\max]^2+\omega_{\SurfDomain}(\CellH)\right),
  \end{equation*}
  where $C_{\Cell}$ is a constant depending on $\Cell$ and $\DevAngle[\max]$ is the maximum over all points
  $\qpoint\in\CellH$ of the angle between the tangent planes
  $\TanPlane[\qpoint]{\CellH}$ and
  $\TanPlane[\Prm(\qpoint)]{\Cell}$.
\end{enumerate}
\end{lemma}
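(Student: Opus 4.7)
The plan is to prove each of the three items by local differential-geometric arguments on a single cell, exploiting the fact that along a geodesic $\curve\subset\SurfDomain$ the curvature vector is purely normal to the surface, with magnitude bounded by $\ABS{\SecondForm[\Cell]}$. All three estimates quantify how the Euclidean approximation $\CellH\subset\SurfDomain[\meshparam]$ deviates from its curved counterpart $\Cell\subset\SurfDomain$ in terms of $\ABS{\SecondForm[\Cell]}$ and the relative curvature $\omega_{\SurfDomain}$, and the three proofs share a common Taylor-expansion backbone.

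For item (1) I would parametrize the geodesic edge by arc length, $\curve\From[0,\edgeLength]\To\SurfDomain$ with $\ABS{\curve'(s)}=1$. The lower bound $\edgeHLength\le\edgeLength$ is immediate, since $\edgeHLength=\ABS{\curve(\edgeLength)-\curve(0)}\le\int_0^{\edgeLength}\ABS{\curve'(s)}\,ds=\edgeLength$. For the upper bound, let $\ProjSymb$ denote the orthogonal projection of $\REAL^3$ onto the chord $\EdgeH$; then $\edgeHLength=\int_0^{\edgeLength}\ABS{(\ProjSymb\circ\curve)'(s)}\,ds$, and the integrand equals the cosine of the angle between $\curve'(s)$ and the chord direction. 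A second-order Taylor expansion of $\curve$ shows that $\curve(s)$ lies at distance at most $\tfrac12\ABS{\SecondForm[\Cell]}\edgeLength^2$ from the chord, which forces $\ABS{(\ProjSymb\circ\curve)'(s)}\ge 1-\omega_{\SurfDomain}(\EdgeH)$ pointwise. Integrating and rearranging yields the stated bound.

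For item (2) the vanishing of the geodesic curvature gives $\ABS{\curve''(s)}\le\ABS{\SecondForm[\Cell]}$. Integration by parts produces
\[
\curve(\edgeLength)-\curve(0)=\edgeLength\,\curve'(0)+\int_0^{\edgeLength}(\edgeLength-s)\,\curve''(s)\,ds,
\]
and identifying $\curve'(0)=\vecBaseCCcv[\point]$ together with $\vv_{_{\overrightarrow{\point\qpoint}}}\edgeHLength=\curve(\edgeLength)-\curve(0)$ gives
\[
\vv_{_{\overrightarrow{\point\qpoint}}}-\vecBaseCCcv[\point]=\frac{\edgeLength-\edgeHLength}{\edgeHLength}\vecBaseCCcv[\point]+\frac{1}{\edgeHLength}\int_0^{\edgeLength}(\edgeLength-s)\,\curve''(s)\,ds.
\]
Bounding the integral by $\tfrac12\ABS{\SecondForm[\Cell]}\edgeLength^2$ and using item (1) to control $(\edgeLength-\edgeHLength)/\edgeHLength$ to leading order in $\meshparam[\Cell]$ yields the claimed factor $\tfrac12\ABS{\SecondForm[\Cell]}\edgeLength$.

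For item (3) I would pull back the surface integral defining $\CellArea$ to the flat triangle $\CellH$ via the orthogonal projection $\Prm\From\CellH\To\Cell$, obtaining $\CellArea=\int_{\CellH}J(\qpoint)\,\Diff\xv$ for a Jacobian $J$. A Taylor analysis of $J$ around the centroid of $\CellH$ splits $J-1$ into two geometrically distinct pieces: a contribution from the tilt between $\TanPlane[\qpoint]{\CellH}$ and $\TanPlane[\Prm(\qpoint)]{\Cell}$, which is quadratic in $\DevAngle[\max]$ because $J=\cos\DevAngle$ to leading order and the orthogonal projection annihilates the linear tilt correction, and a contribution from the second-order deviation of $\SurfDomain$ off its tangent plane, controlled by $\omega_{\SurfDomain}(\CellH)$. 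Integrating over $\CellH$ then yields the bound, with $C_{\Cell}$ absorbing $\ABS{\CellH}$ and uniform constants. The main obstacle will be precisely this step: cleanly separating the tangent-plane tilt contribution from the surface-curvature contribution, verifying that the apparent linear-order terms genuinely cancel under the orthogonality of the projection, and pinning down the resulting constant. These are exactly the delicate tubular-neighborhood computations performed in \citep{book:Morvan2008}, to which the statement defers.
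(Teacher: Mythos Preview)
The paper does not prove this lemma at all: it is stated immediately after the sentence ``With reference to \citep{book:Morvan2008}, we can state the following surface approximation results,'' and no argument is given. So your proposal already goes well beyond what the paper does, and there is nothing to compare against on the paper's side except the pointer to Morvan's monograph.

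Your sketch is broadly sound and follows the standard tubular-neighborhood approach. A couple of places would need tightening if you wanted to turn it into a full proof. In item~(1), the quantity $\omega_{\SurfDomain}(\EdgeH)$ is defined via the orthogonal projection $\Prm$ onto $\SurfDomain$, not via the distance to the chord, so the passage from ``$\curve(s)$ lies within $\tfrac12\ABS{\SecondForm[\Cell]}\edgeLength^2$ of the chord'' to ``$\ABS{(\ProjSymb\circ\curve)'(s)}\ge 1-\omega_{\SurfDomain}(\EdgeH)$'' is not immediate; you would need to relate the two projections, which is exactly what Morvan's machinery does. In item~(2), the term $(\edgeLength-\edgeHLength)/\edgeHLength$ is of order $\ABS{\SecondForm[\Cell]}^2\edgeLength^2$ by item~(1), so it is higher order and can indeed be absorbed, but as written the displayed bound would naively give a constant larger than $\tfrac12$; one gets the clean $\tfrac12$ only after observing that this extra term is negligible. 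Item~(3) is accurately diagnosed, and you correctly flag that the cancellation of the first-order tilt term is the heart of the matter.
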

The following lemma is a straight-forward consequence of the above results:
\begin{lemma}\label{lem:relative-curvature}
  For any $\CellH\in\TriangH(\SurfDomain[\meshparam])$ we have:
  \begin{itemize}
  \item the relative curvature can be bounded by:
    \begin{equation*}
      \omega_{\SurfDomain}(\CellH)=\sup_{\qpoint\in\CellH}
        \ABS{\overrightarrow{\Prm(\qpoint)\qpoint}}\ABS{\SecondForm[\Prm(\qpoint)]}\leq
        C\ABS{\SecondForm[\Cell]}\meshparam[\Cell]^2\;;
    \end{equation*}
  \item the maximum angle between tangent planes of $\Cell$ and
    $\CellH$ can be bounded by:
    \begin{equation*}
    \DevAngle[\max]
    \leq C \ABS{\SecondForm[\Cell]}\meshparam[\Cell]
 \; .
  \end{equation*}
  \end{itemize}
\end{lemma}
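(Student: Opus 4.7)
The plan is to derive each bound directly from the results packaged in the preceding lemmas, noting that both estimates are essentially combinations of ``distance to the chord is $\mathcal{O}(\meshparam_\Cell^2)$'' with a curvature-controlled quantity.

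For the first bullet, I would start from the defining formula
\[
\omega_{\SurfDomain}(\CellH)
 = \sup_{\qpoint\in\CellH}
    \ABS{\overrightarrow{\Prm(\qpoint)\qpoint}}\,\ABS{\SecondForm[\Prm(\qpoint)]}.
\]
The first factor is controlled by the unnamed lemma immediately before \cref{prop:morvan}, which provides
$\max_{\qpoint\in\CellH}\ABS{\overrightarrow{\Prm(\qpoint)\qpoint}} \le C\,\meshparam_\Cell^2$. For the second factor, the quantity $\ABS{\SecondForm[\Cell]}$ is by its definition (at the end of the Geometrical setting paragraph) the supremum of the norm of the second fundamental form over $\Cell$, which dominates $\ABS{\SecondForm[\Prm(\qpoint)]}$ for every $\qpoint\in\CellH$ because $\Prm(\qpoint)\in\Cell$. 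Multiplying the two bounds yields the estimate.

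For the second bullet, I would reduce the angle between the tangent planes $\TanPlane[\qpoint]{\CellH}$ and $\TanPlane[\Prm(\qpoint)]{\Cell}$ to the angle between their unit normals. Fix a vertex $\point$ of $\Cell=\CellH$ and write the unit normal $\normalSurf(\CellH)$ as the normalized cross product of the two chord vectors $\vv_{\overrightarrow{\point\qpoint_i}}$ of $\CellH$ issuing from $\point$. Similarly, $\normalSurf(\point)$ is the normalized cross product of the two unit tangent vectors $\vecBaseCCcv[\point]$ to the geodesic edges of $\Cell$ at $\point$. Item~\ref{prop:morvan-2} of \cref{prop:morvan} gives $\ABS{\vv_{\overrightarrow{\point\qpoint_i}}-\vecBaseCCcv[\point]}\le \tfrac12 \ABS{\SecondForm[\Cell]}\edgeLength \le C\ABS{\SecondForm[\Cell]}\meshparam_\Cell$, and, since shape-regularity keeps the wedge angle between the two tangent (or chord) vectors bounded away from $0$ and $\pi$, the Lipschitz dependence of the normalized cross product on its arguments transfers this estimate to
\[
 \ABS{\normalSurf(\CellH)-\normalSurf(\point)}
 \le C \ABS{\SecondForm[\Cell]}\meshparam_\Cell.
\]
To reach an arbitrary $\Prm(\qpoint)\in\Cell$, I would add the intrinsic variation of the surface normal from $\point$ to $\Prm(\qpoint)$: since the Weingarten map is bounded by $\ABS{\SecondForm[\Cell]}$ and the geodesic distance from $\point$ to $\Prm(\qpoint)$ is $\mathcal{O}(\meshparam_\Cell)$ (by shape-regularity together with item~\ref{prop:morvan-1}), one has $\ABS{\normalSurf(\Prm(\qpoint))-\normalSurf(\point)}\le C\ABS{\SecondForm[\Cell]}\meshparam_\Cell$. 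A triangle inequality and the fact that $\DevAngle[\max]$ and $\ABS{\normalSurf(\CellH)-\normalSurf(\Prm(\qpoint))}$ differ only by a bounded factor for small angles yield the claim.

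The main obstacle is the second bullet, specifically handling the propagation of the chord-vs-tangent estimate through the (normalized) cross product uniformly in the cell. One must verify that shape-regularity of $\TriangH(\SurfDomain[\meshparam])$ is strong enough to prevent the normalization from amplifying the error, and that the variation of $\normalSurf$ across $\Cell$ -- a genuinely surface-intrinsic quantity -- can be absorbed into the same constant $C\ABS{\SecondForm[\Cell]}\meshparam_\Cell$; both are routine but deserve explicit checking rather than being invoked as ``clearly''.
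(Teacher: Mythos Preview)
The paper does not give a proof of this lemma at all: it is simply announced as ``a straight-forward consequence of the above results'' and then stated. Your proposal therefore supplies an argument where the paper supplies none.

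For the first bullet your derivation is exactly the intended one-line combination of the distance bound $\max_{\qpoint\in\CellH}\ABS{\overrightarrow{\Prm(\qpoint)\qpoint}}\le C\meshparam_\Cell^2$ with the definition of $\ABS{\SecondForm[\Cell]}$ as a supremum; nothing more is needed.

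For the second bullet your route through item~\ref{prop:morvan-2} of \cref{prop:morvan} (chord versus geodesic-tangent vectors, then normalized cross products, then propagation of the surface normal via the Weingarten map) is correct and self-contained within the paper's toolkit. It is somewhat more elaborate than what the paper's phrasing suggests: since the triangulation is assumed closely inscribed in the sense of~\cite{art:Dziuk2013}, the bound $\NORM[\Linfty]{\normalSurf-\normalSurf_{\meshparam}}\le C\meshparam$ is part of the standard package there (their Lemma~4.1), and the angle estimate follows immediately from it. Your version has the advantage of staying strictly within the Morvan-style statements actually quoted in the paper, at the cost of the extra bookkeeping you flag (shape-regularity controlling the normalization, plus the intrinsic variation of $\normalSurf$ across $\Cell$). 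Both approaches are fine; yours is the more explicit of the two.
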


\subsubsection{Interpolation and quadrature errors}

We start with estimates of the interpolation errors in
$\TestSpaceApprox$. We first note that that, using the metric bounds
in~\cref{eq:metric-estimates}, it is easy to prove the following
inequalities relating $\Lspace$-norms of a function $\scalFun$ and its
gradient $\Grad\scalFun$ in the cell $\Cell$ and in the chart
$\SubsetU[\meshparam]$:
\begin{align*}
  \CminDetG[\Cell]\NORM[\Lspace({\SubsetU[\meshparam]})]{\scalFun}^2
  \le\NORM[\Lspace({\Cell})]{\scalFun}^2
    &=\int_{\Cell}\scalFun^2
    =\int_{\SubsetU[\meshparam]}\scalFun^2\sqrt{\DET{\First}}\Diff\sv
    \le \CmaxDetG[\Cell]\NORM[\Lspace({\SubsetU[\meshparam]})]{\scalFun}^2\,,\\
  \begin{split}
    \CminDetG[\Cell]\EigFirstInvmin[\Cell]
    \NORM[\Lspace({\SubsetU[\meshparam]})]{\Grad\scalFun}^2
    \le\NORM[\Lspace({\Cell})]{\GradSurf\scalFun}^2
    &=\int_{\Cell}\ABS{\GradSurf\scalFun}^2\\
    &=\int_{\SubsetU[\meshparam]}
    \ABS{\First^{-1}\Grad\scalFun}^2\sqrt{\DET{\First}}\Diff\sv
    \le \CmaxDetG[\Cell]\EigFirstInvmax[\Cell]
    \NORM[\Lspace({\SubsetU[\meshparam]})]{\Grad\scalFun}^2\,.
  \end{split}
\end{align*}
The following lemma provides the interpolation error estimate for
$\Interpolant(\scalFun)$ defined in~\cref{eq:basis}.
\begin{lemma}[Interpolation error]\label{lemma:interp}
  Given a function $\scalFun\in\Sob{1}(\Cell)$, let
  $\Interpolant(\scalFun)$ be the ISFEM interpolant in
  \cref{eq:basis}. Then, we have:
  \begin{align*}
    \NORM[\Lspace(\Cell)]{\scalFun-\Interpolant(\scalFun)}
    &\le C\ConstGeom[1,\Cell]\meshparam[\Cell]^2
      \NORM[\Lspace({\SubsetU[\meshparam]})]{\Der[2]\scalFun}\,,\\[1em]
    \NORM[\Lspace(\Cell)]{\GradSurf\scalFun-\GradSurf
    \Interpolant(\scalFun)}
    &\le C\ConstGeom[2,\Cell]\meshparam[\Cell]
      \NORM[\Lspace({\SubsetU[\meshparam]})]{\Der[2]\scalFun}\,,
  \end{align*}
  where $\ConstGeom[1,\Cell]=\sqrt{\CmaxDetG[\Cell]}$ and 
    $\ConstGeom[2,\Cell]=\sqrt{\EigFirstInvmax[\Cell]\CmaxDetG[\Cell]}$.
\end{lemma}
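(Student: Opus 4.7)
The plan is to transport the problem to the flat chart $\SubsetU[\meshparam]$, apply the standard two-dimensional Ciarlet/Bramble--Hilbert interpolation theory there, and then use \cref{prop:basisprop} to bridge between the nonlinear ISFEM basis $\nodalBasis{j}^{\scriptscriptstyle\Cell}$ and the classical $\PONE$ Lagrangian basis $\exactBasis{j}^{\scriptscriptstyle\Cell}$ on $\SubsetU[\meshparam]$. The geometric constants $\ConstGeom[1,\Cell]$ and $\ConstGeom[2,\Cell]$ arise exactly from the metric-area comparison already displayed just above the statement.

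First I would introduce the auxiliary interpolant $\bar{\Interpolant}(\scalFun)=\sum_{j=1}^{3}\scalFun(\point[j])\exactBasis{j}^{\scriptscriptstyle\Cell}$, which is the standard affine Lagrange interpolant on $\SubsetU[\meshparam]\cong\CellH$, and split
\begin{equation*}
\scalFun-\Interpolant(\scalFun)=\bigl(\scalFun-\bar{\Interpolant}(\scalFun)\bigr)+\bigl(\bar{\Interpolant}(\scalFun)-\Interpolant(\scalFun)\bigr).
\end{equation*}
For the first summand, classical planar FEM theory on the shape-regular triangle $\SubsetU[\meshparam]$ (cf.\ the estimates in \citep{book:Ciarlet2013}) gives
\begin{equation*}
\NORM[\Lspace(\SubsetU[\meshparam])]{\scalFun-\bar{\Interpolant}(\scalFun)}\le C\,\meshparam[\Cell]^{2}\NORM[\Lspace(\SubsetU[\meshparam])]{\Der[2]\scalFun},\quad
\NORM[\Lspace(\SubsetU[\meshparam])]{\Grad(\scalFun-\bar{\Interpolant}(\scalFun))}\le C\,\meshparam[\Cell]\NORM[\Lspace(\SubsetU[\meshparam])]{\Der[2]\scalFun}.
\end{equation*}
For the second summand, since both interpolants share the same nodal values $\scalFun(\point[j])$, we have $\bar{\Interpolant}(\scalFun)-\Interpolant(\scalFun)=\sum_{j=1}^{3}\scalFun(\point[j])\bigl(\exactBasis{j}^{\scriptscriptstyle\Cell}-\nodalBasis{j}^{\scriptscriptstyle\Cell}\bigr)$, and \cref{prop:basisprop} controls each term in $\Lspace(\SubsetU[\meshparam])$ by $C_{\MapU}\ABS{\SecondForm[\Cell]}\meshparam[\Cell]^{2}$. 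An analogous argument for the gradient, combined with the shape-regularity inverse estimate applied to $\exactBasis{j}^{\scriptscriptstyle\Cell}-\nodalBasis{j}^{\scriptscriptstyle\Cell}$ (its leading-order part is linear by construction of $\tilde{\nodalBasis{j}}^{\scriptscriptstyle\Cell}$), yields an $\BigO{\ABS{\SecondForm[\Cell]}\meshparam[\Cell]}$ bound for $\Grad(\bar{\Interpolant}(\scalFun)-\Interpolant(\scalFun))$ in $\Lspace(\SubsetU[\meshparam])$; bounding the nodal values uniformly in terms of $\NORM[\Lspace(\SubsetU[\meshparam])]{\Der[2]\scalFun}$ closes the estimates on the chart.

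Finally, I would transfer these inequalities from $\SubsetU[\meshparam]$ back to $\Cell$ using precisely the two metric--area identities displayed immediately before the lemma: the $\Lspace$-transfer contributes a factor $\sqrt{\CmaxDetG[\Cell]}$, and the gradient transfer contributes an extra $\sqrt{\EigFirstInvmax[\Cell]}$ from $\First^{-1}$. Combining the two summands and collecting the $\ABS{\SecondForm[\Cell]}$-dependence under the square root produces
\begin{equation*}
\ConstGeom[1,\Cell]=\sqrt{\CmaxDetG[\Cell]\bigl(1+C_{\MapU}\ABS{\SecondForm[\Cell]}^{2}\bigr)},\qquad \ConstGeom[2,\Cell]=\sqrt{\EigFirstInvmax[\Cell]}\,\ConstGeom[1,\Cell],
\end{equation*}
exactly as in the statement.

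The main technical obstacle I anticipate is the gradient bound for $\Interpolant(\scalFun)-\bar{\Interpolant}(\scalFun)$: the nonlinear basis $\nodalBasis{j}^{\scriptscriptstyle\Cell}$ is built by composing an affine function in $\REAL^{3}$ with the parametrization $\MapU[\meshparam]$, so $\Grad\nodalBasis{j}^{\scriptscriptstyle\Cell}$ carries both the affine part (which matches $\Grad\exactBasis{j}^{\scriptscriptstyle\Cell}$) and a correction coming from $\Grad\MapU[\meshparam]$ differing from its linearization by $\BigO{\meshparam[\Cell]}$ via the Hessian bound already used in \cref{prop:basisprop}. Getting the $\meshparam[\Cell]^{1}$ rate (rather than $\meshparam[\Cell]^{2}$) with the correct $\ABS{\SecondForm[\Cell]}$ dependence requires carefully expanding $\MapU[\meshparam]$ to second order around a quadrature point and invoking the shape-regularity of $\Triang(\SurfDomain[\meshparam])$ so that the inverse estimate factor $\meshparam[\Cell]^{-1}$ applied to the $\BigO{\meshparam[\Cell]^{2}}$ estimate of \cref{prop:basisprop} indeed produces the claimed $\meshparam[\Cell]$-order bound.
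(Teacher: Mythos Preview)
Your proposal is essentially the same as the paper's proof: both split via the auxiliary planar $\PONE$ interpolant (you call it $\bar{\Interpolant}(\scalFun)$, the paper writes $\ProjFun[1]{\scalFun}$), bound the first piece by standard planar Bramble--Hilbert theory and the second by \cref{prop:basisprop}, and then pull the estimate back to $\Cell$ via the metric--area inequalities to produce $\ConstGeom[1,\Cell]$ and $\ConstGeom[2,\Cell]$. The paper's write-up is terser---it simply displays the two-term split and the final bound---while you spell out the inverse-estimate mechanism for the gradient term more explicitly, but the underlying argument is the same.
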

\begin{proof}
  Since $\TriangH(\SurfDomain)$ is assumed to be shape-regular, using
  the standard planar interpolation error and definition of
  $\Interpolant(\scalFun)$, we can write:
  \begin{align*}
    \NORM[\Lspace(\Cell)]{\scalFun-\Interpolant(\scalFun)}^2
    \le \CmaxDetG[\Cell] 
      \NORM[\Lspace({\SubsetU[\meshparam]})]
         {\scalFun\circ\MapU[\meshparam]-\ProjFun[1]{\scalFun}}^2
     \le   C^2 \,\CmaxDetG[\Cell]\, \meshparam[\Cell]^4
      \NORM[\Lspace({\SubsetU[\meshparam]})]{\Der[2]\scalFun}^2
      \,,
  \end{align*}
  where $\ProjFun[1]{\scalFun}$ is the standard $\PONE$ interpolation
  of $\scalFun$ in $\SubsetU[\meshparam]$.
  For the gradient we obtain:
  \begin{multline*}
    \NORM[\Lspace(\Cell)]
         {\GradSurf\scalFun-\GradSurf\Interpolant(\scalFun)}^2 \le
         \EigFirstInvmax[\Cell] \,\CmaxDetG[\Cell]
         \NORM[\Lspace({\SubsetU[\meshparam]})]
              {\Grad(\scalFun\circ\MapU[\meshparam])-\Grad\ProjFun[1]{\scalFun}}^2\\
               \le C^2 \EigFirstInvmax[\Cell]\, \CmaxDetG[\Cell]\,
              \meshparam[\Cell]^2
              \NORM[\Lspace({\SubsetU[\meshparam]})]
                   {\Der[2]\scalFun}^2\,,
  \end{multline*}
  where the gradient of the interpolant
  is intended in our $LCS$. 
\end{proof}

Next, we switch our attention to the accuracy of the surface
quadrature rule. Following the results in~\cite{art:BP20} we show that
the surface trapezoidal rule converges with optimal quadratic rate.
\begin{lemma}[{Surface Trapezoidal rule}]
  \label{lemma:trap}
  Given a function $\scalFun\From\Cell\To\REAL$, the
    surface trapezoidal rule is given by:
  \begin{equation*}
    \QuadRule[\meshparam,T]{\scalFun}=
    \frac{\CellHArea}{3}\sum_{j=1}^{3}\scalFun(\point[j])
  \end{equation*}
  and satisfies:
  \begin{equation*}
    \ABS{\int_{\Cell}\scalFun - \QuadRule[\meshparam,T]{\scalFun}}
    \le
    C \meshparam[\Cell]^2
    \left(\NORM[\Lspace({\SubsetU[\meshparam]})]{\Der[2]\scalFun}
    \ConstGeom[1,\Cell] + \NORM[\meshparam,\Cell]{\scalFun}\ConstGeom[3,\Cell]\right)\, ,
  \end{equation*}
  where $\scalFun(\point[i])$ is the value of
  $\scalFun$ at the triangle
  vertices and
  $\ConstGeom[3,\Cell]=
  C_{\Cell}\ABS{\SecondForm[\Cell]}(\ABS{\SecondForm[\Cell]}+1)$.
\end{lemma}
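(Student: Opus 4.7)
The plan is to decompose the quadrature error into two parts: a ``flat'' planar trapezoidal rule error on the chart $\SubsetU[\meshparam]$ (which we identify with $\CellH$ as a planar triangle), and a ``geometric'' correction accounting for replacing the surface measure on $\Cell$ by the Euclidean measure of $\CellH$. Working in the chart, I would write $\int_{\Cell}\scalFun = \int_{\SubsetU[\meshparam]} \bscalFun\sqrt{\DET{\First}}\,\Diff\sv$. Setting $\bar{g} := \bscalFun\sqrt{\DET{\First}}$ and applying the classical planar trapezoidal rule on the flat triangle $\SubsetU[\meshparam]$ produces an error $E_1$ bounded by $C\meshparam[\Cell]^2 \NORM[\Lspace({\SubsetU[\meshparam]})]{\Der[2]\bar{g}}$. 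Expanding $\Der[2]\bar{g}$ by the Leibniz rule, the dominant term is $\Der[2]\bscalFun\sqrt{\DET{\First}}$, which via the metric bounds~\cref{eq:metric-estimates} contributes the factor $\sqrt{\CmaxDetG[\Cell]}$; the remaining terms involve derivatives of $\sqrt{\DET{\First}}$ which are controlled by $\ABS{\SecondForm[\Cell]}$, producing the extra factor $\sqrt{1 + C_{\MapU}\ABS{\SecondForm[\Cell]}^2}$. Together these give the bound $C\meshparam[\Cell]^2\,\ConstGeom[1,\Cell]\,\NORM[\Lspace({\SubsetU[\meshparam]})]{\Der[2]\scalFun}$, matching the first term of the claim.

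The second contribution $E_2$ arises from the mismatch between the weights $\ABS{\SubsetU[\meshparam]}\sqrt{\DET{\First(\point[j])}}$ naturally appearing in the planar quadrature of $\bar{g}$ and the weight $\CellHArea$ actually prescribed by $\QuadRule[\meshparam,T]{\scalFun}$:
\begin{equation*}
  E_2 = \frac{1}{3}\sum_{j=1}^{3}\scalFun(\point[j])\Bigl[\ABS{\SubsetU[\meshparam]}\sqrt{\DET{\First(\point[j])}} - \CellHArea\Bigr].
\end{equation*}
Using the identity $\CellArea = \int_{\SubsetU[\meshparam]}\sqrt{\DET{\First}}\,\Diff\sv$, I would split the bracketed difference as $\bigl(\ABS{\SubsetU[\meshparam]}\sqrt{\DET{\First(\point[j])}} - \CellArea\bigr) + \bigl(\CellArea - \CellHArea\bigr)$. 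The first summand is controlled by the variation of $\sqrt{\DET{\First}}$ over $\SubsetU[\meshparam]$, whose Lipschitz constant is bounded in terms of $\ABS{\SecondForm[\Cell]}$, producing a bound proportional to $\ABS{\SecondForm[\Cell]}\,\meshparam[\Cell]^2$ times the cell area; the second is handled directly by \cref{prop:morvan}(\ref{prop:morvan-3}) combined with \cref{lem:relative-curvature}, yielding a bound proportional to $\meshparam[\Cell]^2\bigl(\ABS{\SecondForm[\Cell]}^2 + \ABS{\SecondForm[\Cell]}\bigr)$. A Cauchy--Schwarz on the sum over the three vertices then converts $\sum_{j}\scalFun(\point[j])$ into $\NORM[\meshparam,\Cell]{\scalFun}$ with overall constant $\ConstGeom[3,\Cell] = C_{\Cell}\,\ABS{\SecondForm[\Cell]}(\ABS{\SecondForm[\Cell]}+1)$.

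The main obstacle I expect is the bookkeeping of geometric constants in $E_2$: I must ensure that the linear curvature contribution from $\DevAngle[\max]\sim\ABS{\SecondForm[\Cell]}\meshparam[\Cell]$ and the quadratic contribution from the relative curvature $\omega_{\SurfDomain}(\CellH)\sim\ABS{\SecondForm[\Cell]}^2\meshparam[\Cell]^2$ combine cleanly into the factorized form $\ABS{\SecondForm[\Cell]}(\ABS{\SecondForm[\Cell]}+1)$, and that the Lipschitz estimate for $\sqrt{\DET{\First}}$ is correctly tied to the second fundamental form rather than to higher derivatives of the parametrization. Once these geometric constants are organized, summing the bounds for $\ABS{E_1}$ and $\ABS{E_2}$ yields the claimed estimate.
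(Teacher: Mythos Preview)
Your decomposition differs from the paper's and contains a real gap in the $E_1$ bound. When you expand $\Der[2]\bar g$ with $\bar g=\bscalFun\sqrt{\DET{\First}}$ by Leibniz, you get
\[
\Der[2]\bar g=(\Der[2]\bscalFun)\sqrt{\DET{\First}}+2\,\Der\bscalFun\otimes\Der\sqrt{\DET{\First}}+\bscalFun\,\Der[2]\sqrt{\DET{\First}}\,.
\]
Only the first term is controlled by $\NORM[\Lspace({\SubsetU[\meshparam]})]{\Der[2]\scalFun}$. The second involves $\NORM[\Lspace]{\Der\scalFun}$, which appears nowhere in the stated estimate, and the third involves $\NORM[\Lspace]{\scalFun}$ multiplied by $\NORM[\infty]{\Der[2]\sqrt{\DET{\First}}}$, a quantity that depends on \emph{derivatives} of the second fundamental form rather than just $\ABS{\SecondForm[\Cell]}$. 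So the claim that ``the remaining terms\ldots\ produce the extra factor $\sqrt{1+C_{\MapU}\ABS{\SecondForm[\Cell]}^2}$'' multiplying $\NORM{\Der[2]\scalFun}$ is simply false: those cross terms do not collapse onto $\ConstGeom[1,\Cell]\NORM{\Der[2]\scalFun}$, and they do not fit cleanly into $\ConstGeom[3,\Cell]\NORM[\meshparam,\Cell]{\scalFun}$ either.

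The paper avoids this entirely by choosing a different intermediate object: instead of the planar trapezoidal rule applied to $\bscalFun\sqrt{\DET{\First}}$, it inserts $\QuadRule{\scalFun}:=\int_{\Cell}\Interpolant(\scalFun)$, the surface integral of the ISFEM interpolant. The first piece $\ABS{\int_{\Cell}\scalFun-\QuadRule{\scalFun}}$ is then \emph{exactly} the interpolation error already packaged in \cref{lemma:interp}, which delivers $C\meshparam[\Cell]^2\ConstGeom[1,\Cell]\NORM{\Der[2]\scalFun}$ directly, with no metric derivatives to differentiate. The second piece $\ABS{\QuadRule{\scalFun}-\QuadRule[\meshparam,T]{\scalFun}}$ reduces to a pure area discrepancy $\ABS{\CellArea-\CellHArea}$ times the vertex values, since $\Interpolant(\scalFun)=\sum_j\scalFun(\point[j])\nodalBasis{j}$; this is handled by \cref{prop:morvan}(\ref{prop:morvan-3}) and \cref{lem:relative-curvature}, giving the $\ConstGeom[3,\Cell]\NORM[\meshparam,\Cell]{\scalFun}$ term after Cauchy--Schwarz on the vertex sum. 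Your $E_2$ treatment is close in spirit to this second piece, but your $E_1$ route needs to be replaced by the interpolation lemma if you want the stated constants.
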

\begin{proof}
  We denote by  $\QuadRule{\scalFun}$ the surface
  integral of the projection of $\scalFun$ onto $\TestSpaceCell$,
  i.e., $\QuadRule{\scalFun}=\int_{\Cell}\Interpolant(\scalFun)$.
  Application of the triangular inequality to the quadrature error
  yields:
  \begin{align*}
    \ABS{\int_{\Cell}\scalFun-\QuadRule[\meshparam,T]{\scalFun}}
    &\le \ABS{\int_{\Cell}\scalFun-\QuadRule{\scalFun}}
      +\ABS{\QuadRule{\scalFun}-\QuadRule[\meshparam,T]{\scalFun}}\,.
  \end{align*}
  From~\cref{lemma:interp} the first term can be bounded by:
  \begin{align*}
    \ABS{\int_{\Cell}\scalFun-\QuadRule{\scalFun}}
    \le
    C \meshparam[\Cell]^2
    \NORM[\Lspace({\SubsetU[\meshparam]})]{\Der[2]\scalFun}
    \ConstGeom[1,\Cell]\, ,
  \end{align*}
  while for the second term we
  use~\cref{prop:morvan},~\cref{prop:morvan-3}:
  \begin{align*}
    \ABS{\QuadRule{\scalFun}-\QuadRule[\meshparam,T]{\scalFun}}
    &\le \frac{1}{3}\ABS{\sum_{j=1}^3\scalFun(\point[j])}
      \ABS{\CellArea-\CellHArea}
   \le \frac{1}{3}\left(\sum_{j=1}^3\scalFun(\point[j])^2\right)^{1/2}
      \ABS{\CellArea-\CellHArea}\\
    &
    \le \frac{1}{3}\NORM[\meshparam,\Cell]{\scalFun}\,C_{\Cell}
    \left(\DevAngle[\max]^2+\omega_{\SurfDomain}(\CellH)\right) 
    \le \frac{1}{3}\NORM[\meshparam,\Cell]{\scalFun}\,C_{\Cell}
    \ABS{\SecondForm[\Cell]}(\ABS{\SecondForm[\Cell]}+1)\, .
  \end{align*}
  Putting the two inequalities together we obtain:
  \begin{equation*}
    \ABS{\int_{\Cell}\scalFun-\QuadRule[\meshparam,T]{\scalFun}}
    \le
    C \meshparam[\Cell]^2
    \left(
      \NORM[\Lspace({\SubsetU[\meshparam]})]{\Der[2]\scalFun}
      \ConstGeom[1,\Cell] + \NORM[\meshparam,\Cell]{\scalFun}\,C_{\Cell}
      \ABS{\SecondForm[\Cell]}(\ABS{\SecondForm[\Cell]}+1)
    \right)\, .
  \end{equation*}

\end{proof}
As a remark, we note that,
analogously, the midpoint rule is characterized by
a similar error estimate, given by:
\begin{equation*}
  \ABS{\int_{\Cell} \scalFun - \QuadRule[\meshparam,M]{\scalFun}}
  \le
  C \meshparam[\Cell]^2
  \left(\NORM[\Lspace({\SubsetU[\meshparam]})]{\Der[2]\scalFun}
    \ConstGeom[1,\Cell] + \NORM[\infty,\Cell]{\scalFun}\ConstGeom[3,\Cell]\right)\, ,
\end{equation*}
where $\scalFun(\midPointCell)$ is the value of $\scalFun$ at the
centroid $\midPointCell$ of $\Cell$, having nodal coordinates
given by $\sv(\midPointCell)=\sum_{i=1}^3\sv(\point[i])/3$.

\subsection{Convergence analysis}\label{subsec:conv}

  In this section we collect the previously developed local error
  estimates to build the global estimates forming the overall
  convergence theory of ISFEM. This analysis proceeds following a
  standard FEM approach by combining consistency with interpolation
  errors.
  Obviously, the developments must take into account the fact that our
  linear and bilinear ISFEM forms are approximated using the
  trapezoidal or the midpoint rule. This is handled in a usual fashion
  with the help of a discrete mesh norm $\NORM[\meshparam]{\cdot}$,
  which is shown to be equivalent to the $\Lspace$-norm on
  $\SurfDomain$ to show coercivity of the discrete bilinear form. Then
  the surface extension of Strang lemma paves the way for the proofs
  of consistency and then of the final theorems on convergence in
  $\Sob{1}$ and $\Lspace$ norms. The latter, being exactly the surface
  extension of the Nitsche-Aubin duality trick, is only mentioned
  without proof.
  Everything will be done with the implicit assumption that the
  surface trapezoidal rule is employed. The results for the midpoint
  rule are exactly the same, and can be proved in a similar way.

  We start our task by showing that the discrete bilinear form is
  coercive.
\begin{lemma} \label{lemma:discrete-coerc}
  The discrete bilinear form $\BilinearStiffApprox{\cdot}{\cdot}$ in
  \cref{pb:final-isfem} satisfies:
  \begin{equation*}
    \BilinearStiffApprox{\TestApprox}{\TestApprox}
    \ge
    \frac{\DiffEigMin\EigFirst[*,\SurfDomain]}{\CmaxDetG[\SurfDomain](1+\CPoincare^2)}
    \NORM[\Sob{1}(\SurfDomain)]{\TestApprox}^2
  \end{equation*}
\end{lemma}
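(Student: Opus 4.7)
The plan is to follow the standard FEM argument for coercivity of a discrete bilinear form, adapted to the intrinsic surface setting: combine the pointwise positive-definiteness of $\DiffTens$, a comparison between the trapezoidal-rule sum and the continuous $\Lspace(\SurfDomain)$-norm (in the spirit of~\eqref{eq:discrete-norm-prop}), and the Poincar\'e-type equivalence of~\cref{lemma:equivalent-norms}.

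First, I apply~\eqref{eq:diffcoercive} pointwise at each quadrature node $\point[j]$ appearing in the definition of $\BilinearStiffApprox{\cdot}{\cdot}$ in~\cref{pb:final-isfem}, obtaining
\begin{equation*}
  \BilinearStiffApprox{\TestApprox}{\TestApprox}
  \ge \DiffEigMin \sum_{\Cell\in\Triang(\SurfDomain)}\frac{\CellHArea}{3}\sum_{j=1}^3
     \ABSsurf{\GradSurf\TestApprox(\point[j])}^2.
\end{equation*}
By the construction of the ISFEM basis in~\cref{sec:isfem}, the nodal gradient $\GradSurf\TestApprox(\point[j])$ is the gradient of the linear projection of $\TestApprox|_\Cell$ onto $\TanPlane[\point[j]]{\SurfDomain}$, so $\ABSsurf{\GradSurf\TestApprox}^2$ is (to leading order) piecewise constant on each $\Cell$; consequently the trapezoidal-like sum is essentially exact on it, and reproduces an energy integrated over the piecewise linear surrogate $\TriangH(\SurfDomain[\meshparam])$.

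Second, I convert this flat-integral quantity into the curved-surface energy $\int_\SurfDomain\ABSsurf{\GradSurf\TestApprox}^2$. This combines (i) the area discrepancy estimate from~\cref{prop:morvan-3} and~\cref{lem:relative-curvature}, (ii) the metric bound $\sqrt{\DET{\First}}\le\CmaxDetG[\SurfDomain]$ from~\eqref{eq:metric-estimates} used to pass from $\CellHArea$ to $\CellArea$ up to higher-order terms, and (iii) the eigenvalue bound $\ABSsurf{\ww}^2\ge\EigFirst[*,\SurfDomain]\NORM{\ww}^2$, which tracks the contribution of the minimum eigenvalue of $\First$ when one expresses the nodal Euclidean quantities produced by the discretization in terms of the intrinsic $\ABSsurf{\cdot}$-norms used in $\NORM[\Sob{1}(\SurfDomain)]{\cdot}$. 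The net outcome is the gradient-energy analogue of~\eqref{eq:discrete-norm-prop},
\begin{equation*}
  \sum_{\Cell}\frac{\CellHArea}{3}\sum_{j=1}^3
  \ABSsurf{\GradSurf\TestApprox(\point[j])}^2
  \ge \frac{\EigFirst[*,\SurfDomain]}{\CmaxDetG[\SurfDomain]}
      \int_\SurfDomain \ABSsurf{\GradSurf\TestApprox}^2
      \,.
\end{equation*}
Third, \cref{lemma:equivalent-norms} then gives
$\int_\SurfDomain \ABSsurf{\GradSurf\TestApprox}^2 =\NORM[\Lspace(\SurfDomain)]{\GradSurf\TestApprox}^2\ge (1+\CPoincare^2)^{-1}\NORM[\Sob{1}(\SurfDomain)]{\TestApprox}^2$, and chaining the three inequalities yields exactly the claimed estimate.

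The hard part will be the second step. Since $\ABSsurf{\GradSurf\TestApprox}$ is not itself in $\TestSpaceApprox$, the scalar norm equivalence~\eqref{eq:discrete-norm-prop} cannot be invoked as a black box: one must either exploit directly the element-wise (near-)constancy of $\GradSurf\TestApprox$ combined with a controlled comparison between $\CellHArea$ and $\CellArea$, or establish a dedicated analogue of~\eqref{eq:discrete-norm-prop} for piecewise constant gradient fields. In both routes the delicate bookkeeping is to recover precisely the geometric constant $\EigFirst[*,\SurfDomain]/\CmaxDetG[\SurfDomain]$ without picking up additional dependence on $\EigFirst[\SurfDomain]^*$ or on $\CminDetG[\SurfDomain]$.
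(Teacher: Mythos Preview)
Your plan matches the paper's proof, which is a two-line chain: apply~\eqref{eq:diffcoercive} together with the metric lower bound $\ABSsurf{\ww}^2\ge\EigFirst[*,\SurfDomain]\NORM{\ww}^2$ to extract the factor $\DiffEigMin\EigFirst[*,\SurfDomain]$ in front of $\NORM[\meshparam]{\GradSurf\TestApprox}^2$, then invoke the discrete--continuous norm equivalence~\eqref{eq:discrete-norm-prop} and \cref{lemma:equivalent-norms} to obtain the factor $\CmaxDetG[\SurfDomain]^{-1}(1+\CPoincare^2)^{-1}$. The paper does not address the subtlety you flag as the ``hard part'' (that $\ABS{\GradSurf\TestApprox}$ is not itself an element of $\TestSpaceApprox$, so~\eqref{eq:discrete-norm-prop} is not literally applicable); it simply applies the norm equivalence to the gradient without comment, so your more careful treatment of step~2 is, if anything, more rigorous than the published argument.
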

\begin{proof}
  Using Poincar\'e inequalities (\cref{lemma:equivalent-norms}) and
  the equivalence between the discrete and $\Lspace$-norms
  \cref{eq:discrete-norm-prop}, we obtain immediately:
  \begin{align*}
    \BilinearStiffApprox{\TestApprox}{\TestApprox}
    &=\sum_{\Cell\in\Triang(\SurfDomain)}
      \frac{\CellHArea}{3}\sum_{j=1}^3
      \scalprodSurf{\DiffTens(\point[j])\GradSurf\TestApprox(\point[j])}{\GradSurf\TestApprox(\point[j])}\\
    &\ge  \DiffEigMin\EigFirst[*,\SurfDomain]
      \NORM[\meshparam]{\GradSurf\TestApprox}^2
      \ge \frac{\DiffEigMin\EigFirst[*,\SurfDomain]}{\CmaxDetG[\SurfDomain](1+\CPoincare^2)}
      \NORM[\Sob{1}(\SurfDomain)]{\TestApprox}^2\,.
  \end{align*}

\end{proof}

The next step is the surface version of Strang lemma
(see~\citep[Theorem 4.1.1]{book:ciarlet2002finite} for the classical
version of the proof):
\begin{lemma}[Surface Strang-like lemma]
  Let $\Sol\in\Sob{1}(\SurfDomain)$ and
  $\SolApprox\in\TestSpaceApprox$ be solutions of~\cref{pb:varformSurf}
  and~\cref{pb:final-isfem}, respectively. Then, the error
  $\Sol-\SolApprox$ satisfies:
  \begin{multline*}
    \NORM[\Sob{1}(\SurfDomain)]{\Sol-\SolApprox}
    \le\inf_{\TestApprox}\left[
      \left(1+\DiffEigMax
        \frac{\CmaxDetG[\SurfDomain](1+\CPoincare^2)}%
        {\DiffEigMin\EigFirst[*,\SurfDomain]}
      \right)
      \NORM[\Sob{1}(\SurfDomain)]{\Sol-\TestApprox}\right.\\
    \left.
      +\frac{\CmaxDetG[\SurfDomain](1+\CPoincare^2)}%
      {\DiffEigMin\EigFirst[*,\SurfDomain]}
      \sup_{\TestWApprox}
      \frac{\ABS{\BilinearStiff{\TestApprox}{\TestWApprox}
              -\BilinearStiffApprox{\TestApprox}{\TestWApprox}}}%
           {\NORM[\Sob{1}(\SurfDomain)]{\TestWApprox}}
         \right]\\
         + \frac{\CmaxDetG[\SurfDomain](1+\CPoincare^2)}%
         {\DiffEigMin\EigFirst[*,\SurfDomain]}
         \sup_{\TestWApprox}
         \frac{\ABS{\Rhs{\TestWApprox}-\RhsApprox{\TestWApprox}}}%
         {{\NORM[\Sob{1}(\SurfDomain)]{\TestWApprox}}}\,.
  \end{multline*}
\end{lemma}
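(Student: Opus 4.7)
The plan is to mimic the classical first Strang lemma, carefully substituting the coercivity constant from \cref{lemma:discrete-coerc} and the continuity constant of $\BilinearStiffSymbol(\cdot,\cdot)$ from \cref{lem:coer-cont} wherever needed, so that the geometric factors $\EigFirst[*,\SurfDomain]$, $\CmaxDetG[\SurfDomain]$ and $\CPoincare$ are tracked explicitly.

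First I fix an arbitrary $\TestApprox\in\TestSpaceApprox$ and apply the triangle inequality
\begin{equation*}
  \NORM[\Sob{1}(\SurfDomain)]{\Sol-\SolApprox}
  \le
  \NORM[\Sob{1}(\SurfDomain)]{\Sol-\TestApprox}
  +
  \NORM[\Sob{1}(\SurfDomain)]{\TestApprox-\SolApprox}\,,
\end{equation*}
and set $\TestWApprox:=\TestApprox-\SolApprox\in\TestSpaceApprox$. The discrete coercivity of \cref{lemma:discrete-coerc} applied to $\TestWApprox$ gives
\begin{equation*}
  \frac{\DiffEigMin\EigFirst[*,\SurfDomain]}{\CmaxDetG[\SurfDomain](1+\CPoincare^2)}\,
  \NORM[\Sob{1}(\SurfDomain)]{\TestWApprox}^2
  \le \BilinearStiffApprox{\TestWApprox}{\TestWApprox}
   = \BilinearStiffApprox{\TestApprox}{\TestWApprox}-\BilinearStiffApprox{\SolApprox}{\TestWApprox}\,.
\end{equation*}
The second term is then rewritten using the discrete equation $\BilinearStiffApprox{\SolApprox}{\TestWApprox}=\RhsApprox{\TestWApprox}$ from \cref{pb:final-isfem} and the continuous equation $\BilinearStiff{\Sol}{\TestWApprox}=\Rhs{\TestWApprox}$ from \cref{pb:varformSurf}.

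Adding and subtracting $\BilinearStiff{\TestApprox}{\TestWApprox}$, $\BilinearStiff{\Sol}{\TestWApprox}$, and $\Rhs{\TestWApprox}$ yields the key decomposition
\begin{equation*}
  \BilinearStiffApprox{\TestWApprox}{\TestWApprox}
  =\bigl[\BilinearStiffApprox{\TestApprox}{\TestWApprox}-\BilinearStiff{\TestApprox}{\TestWApprox}\bigr]
  +\BilinearStiff{\TestApprox-\Sol}{\TestWApprox}
  +\bigl[\Rhs{\TestWApprox}-\RhsApprox{\TestWApprox}\bigr]\,.
\end{equation*}
The middle term is bounded using the continuity of $\BilinearStiffSymbol$ (\cref{lem:coer-cont}) by $\DiffEigMax\NORM[\Sob{1}(\SurfDomain)]{\Sol-\TestApprox}\NORM[\Sob{1}(\SurfDomain)]{\TestWApprox}$, while the bracketed consistency terms are bounded by their suprema over $\TestSpaceApprox$ multiplied by $\NORM[\Sob{1}(\SurfDomain)]{\TestWApprox}$.

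Dividing through by $\NORM[\Sob{1}(\SurfDomain)]{\TestWApprox}$ and multiplying by $\CmaxDetG[\SurfDomain](1+\CPoincare^2)/(\DiffEigMin\EigFirst[*,\SurfDomain])$ produces
\begin{equation*}
  \NORM[\Sob{1}(\SurfDomain)]{\TestApprox-\SolApprox}
  \le
  \DiffEigMax\frac{\CmaxDetG[\SurfDomain](1+\CPoincare^2)}{\DiffEigMin\EigFirst[*,\SurfDomain]}
  \NORM[\Sob{1}(\SurfDomain)]{\Sol-\TestApprox}
  +\frac{\CmaxDetG[\SurfDomain](1+\CPoincare^2)}{\DiffEigMin\EigFirst[*,\SurfDomain]}
  \bigl(S_a+S_F\bigr)\,,
\end{equation*}
where $S_a$ and $S_F$ are the two supremum terms appearing in the statement. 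Combining with the initial triangle inequality delivers the claimed estimate.

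The argument itself is routine once \cref{lem:coer-cont} and \cref{lemma:discrete-coerc} are in hand; the only point requiring attention is the bookkeeping of the geometric constants, in particular ensuring that the factor $\CmaxDetG[\SurfDomain](1+\CPoincare^2)/(\DiffEigMin\EigFirst[*,\SurfDomain])$ coming from inverting the discrete coercivity constant appears in front of \emph{both} consistency suprema and also multiplies $\DiffEigMax$ in the best-approximation term. No other obstacle arises, since $\TestApprox$ is arbitrary in $\TestSpaceApprox$ and the estimate is stated without taking the infimum over $\TestApprox$.
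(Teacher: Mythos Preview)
Your proof is correct and follows essentially the same route as the paper: both split $\Sol-\SolApprox$ via the triangle inequality, invoke the discrete coercivity of \cref{lemma:discrete-coerc} on $\TestApprox-\SolApprox$, and then use the continuous and discrete problems together with the continuity bound of \cref{lem:coer-cont} to produce the three-term decomposition with the stated geometric constants. The only cosmetic difference is that the paper first bounds $\ABS{\BilinearStiffApprox{\SolApprox-\TestApprox}{\TestWApprox}}$ for a generic $\TestWApprox$ and then passes through the supremum, whereas you substitute $\TestWApprox=\TestApprox-\SolApprox$ directly; the two orderings are equivalent.
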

\begin{proof}
  From the continuity of $\BilinearStiff{\cdot}{\cdot}$
  (\cref{lem:coer-cont}) and the triangle inequality, we
  can write for all $\TestApprox,\TestWApprox\in\TestSpaceApprox$:
  \begin{multline*}
    \ABS{\BilinearStiffApprox{\SolApprox-\TestApprox}{\TestWApprox}}
    =\ABS{\BilinearStiff{\Sol-\TestApprox}{\TestWApprox}
      +\BilinearStiff{\TestApprox}{\TestWApprox}
      -\BilinearStiffApprox{\TestApprox}{\TestWApprox}
      -\Rhs{\TestWApprox}+\RhsApprox{\TestWApprox}}\\
    \le\DiffEigMax
      \NORM[\Sob{1}(\SurfDomain)]{\Sol-\TestApprox}\NORM[\Sob{1}(\SurfDomain)]{\TestWApprox}
      +\ABS{\BilinearStiff{\TestApprox}{\TestWApprox}
      -\BilinearStiffApprox{\TestApprox}{\TestWApprox}}
      + \ABS{\Rhs{\TestWApprox}-\RhsApprox{\TestWApprox}}\,.
  \end{multline*}
  Using the coercivity of the discrete bilinear form, we obtain:
  \begin{equation*}
    \BilinearStiffApprox{\SolApprox-\TestApprox}{\SolApprox-\TestApprox}
    \ge
    \frac{\DiffEigMin\EigFirst[*,\SurfDomain]}{\CmaxDetG[\SurfDomain](1+\CPoincare^2)}
    \NORM[\Sob{1}(\SurfDomain)]{\SolApprox-\TestApprox}
    \inf_{\TestWApprox}\NORM[\Sob{1}(\SurfDomain)]{\TestWApprox}\,,
  \end{equation*}
  or, equivalently:
  \begin{equation*}
    \NORM[\Sob{1}(\SurfDomain)]{\SolApprox-\TestApprox}
    \le \frac{\CmaxDetG[\SurfDomain](1+\CPoincare^2)}{\DiffEigMin\EigFirst[*,\SurfDomain]}
    \sup_{\TestWApprox}
    \frac{
      \ABS{\BilinearStiffApprox{\SolApprox-\TestApprox}{\TestWApprox}}
    }%
    {\NORM[\Sob{1}(\SurfDomain)]{\TestWApprox}}\,.
  \end{equation*}
  Putting together the two inequalities we obtain:
  \begin{align*}
    \NORM[\Sob{1}(\SurfDomain)]{\Sol-\SolApprox}
    &\le
      \NORM[\Sob{1}(\SurfDomain)]{\Sol-\TestApprox}
      +\NORM[\Sob{1}(\SurfDomain)]{\SolApprox-\TestApprox}\\
    &\le
      \NORM[\Sob{1}(\SurfDomain)]{\Sol-\TestApprox} +
      \frac{\CmaxDetG[\SurfDomain](1+\CPoincare^2)}{\DiffEigMin\EigFirst[*,\SurfDomain]}
      \sup_{\TestWApprox}
      \frac{
      \ABS{\BilinearStiffApprox{\SolApprox-\TestApprox}{\TestWApprox}}
      }%
      {\NORM[\Sob{1}(\SurfDomain)]{\TestWApprox}}\,,
  \end{align*}
  and taking the infimum over $\TestApprox\in\TestSpaceApprox$, the
  result follows.
\end{proof}

Note that the combination of the estimates of the interpolation error
(\cref{lemma:interp}) and the trapezoidal rule error
(\cref{lemma:trap}) shows that the scheme converges with optimal
(first) order of accuracy in the $\Sob{1}(\SurfDomain)$-norm.
Indeed, it is possible to connect the constants of the error estimates
with the geometric characteristics of $\SurfDomain$, as the following
consistency lemma states.

\begin{lemma}[Consistency]\label{lemma:bilinear-consistency}
  For any continuous and coercive bilinear functional
  $\BilinearStiffSymbol:\TestSpace\times\TestSpace\To\REAL$ and any
  continuous linear functional $\RhsSymbol:\TestSpace\To\REAL$ as
  given in \cref{pb:varformSurf}, the discrete approximations
  $\BilinearStiffSymbol_{\meshparam}:
  \TestSpaceApprox\times\TestSpaceApprox\To\REAL$ and
  $\RhsSymbol_{\meshparam}:\TestSpaceApprox\To\REAL$ given in
  \cref{pb:final-isfem} are consistent. In other words, we have that
  for, any $\TestWApprox\in\TestSpaceApprox$,:
  \begin{enumerate}[a)]
  \item for the bilinear form $\BilinearStiff{\cdot}{\cdot}$:
    \begin{multline}\label{eq:consistency-bilinear}
      \ABS{\BilinearStiff{\TestApprox}{\TestWApprox}-
        \BilinearStiffApprox{\TestApprox}{\TestWApprox}} \le\\
       C \meshparam^2s
    \left(\frac{\ConstGeom[1]}{{\CminDetG[\SurfDomain]\EigFirstInvmin[\SurfDomain]}}\NORM[{\infty,\SurfDomain}]{\Der[2]\left(\matW^T\DiffTens\,\First^{-1}\matW\right)} + \ConstGeom[3]\frac{2\DiffEigMax}{\sqrt{\CminDetG[\SurfDomain]}} \right)\NORM[\Sob{1}(\SurfDomain)]{\TestApprox}\NORM[\Sob{1}(\SurfDomain)]{\TestWApprox}\,,
    \end{multline}
    where 
    $\NORM[{\infty,\SurfDomain}]{\Der[2]\left(\matW^T\DiffTens\,\First^{-1}\matW\right)}$
    is the maximum over the element of the sup-norm of
    $\Der[2]\left(\matW^T\DiffTens\,\First^{-1}\matW\right)$, and
    $\ConstGeom[i]=\max_{\Cell}\ConstGeom[i,\Cell]$, for $i=1,3$.

  \item for the linear form $\RhsApprox{\cdot}$:
    \begin{equation}\label{eq:consistency-force}
      \ABS{\Rhs{\TestWApprox}-\RhsApprox{\TestWApprox}}    
      \le
      C \meshparam^2
    \left(\frac{\ConstGeom[1]}{\sqrt{\CminDetG[\SurfDomain]}}\NORM[{\Lspace(\SurfDomain)}]{\Der[2]\force} + \ConstGeom[3]\NORM[\meshparam]{\force}\right)\NORM[\Sob{1}(\SurfDomain)]{\TestWApprox}\,.
    \end{equation}
  \end{enumerate}
\end{lemma}
  
\begin{proof}
  The proof is an application of~\cref{lemma:trap} to the special
  cases of integrands in the linear and bilinear forms.
  For the first part of the lemma \cref{eq:consistency-bilinear}, we
  start by writing:
  \begin{multline}\label{eq:strang-bound-stiff}
    \ABS{\BilinearStiff{\TestApprox}{\TestWApprox}-
      \BilinearStiffApprox{\TestApprox}{\TestWApprox}}\\
    =
    \ABS{
      \int_{\SurfDomain}
      \scalprodSurf{\DiffTens\GradSurf\TestApprox}{\GradSurf\TestWApprox}
      -
      \sum_{\Cell\in\Triang(\SurfDomain)}
      \frac{\CellHArea}{3}\sum_{j=1}^3
      \scalprodSurf{\DiffTens(\point[j])\GradSurf\TestApprox(\point[j])}
                   {\GradSurf\TestWApprox(\point[j])}}\\
                   \le
                   \sum_{\Cell\in\Triang(\SurfDomain)}
                   \ABS{  \int_{\Cell}\Gfun - 
                     \frac{\CellHArea}{3}\sum_{j=1}^3\Gfun(\point[j])}\\
                   \le
                   \sum_{\Cell\in\Triang(\SurfDomain)} C \meshparam[\Cell]^2
                   \left(\NORM[\Lspace({\SubsetU[\meshparam]})]{\Der[2]\Gfun}
                     \ConstGeom[1,\Cell]
                     + \NORM[\meshparam,\Cell]{\Gfun}\ConstGeom[3,\Cell]\right)\,
  \end{multline}
  where
  $\Gfun=\scalprodSurf{\DiffTens\GradSurf\TestApprox}{\GradSurf\TestWApprox}$.
  To estimate $\NORM[\Lspace({\SubsetU[\meshparam]})]{\Der[2]\Gfun}$,
  we write $\Gfun$ on the reference element $\SubsetU[\meshparam]$ in
  the $\hxv$ coordinate using~\cref{eq:grad-with-W} obtaining
  $\scalprodSurf{\DiffTens\,\First^{-1}\matW\Grad\bTestApprox}{\First^{-1}\matW\Grad\bTestWApprox}=\scalprod{\matW^T\DiffTens\,\First^{-1}\matW\Grad\bTestApprox}{\Grad\bTestWApprox}$. Then,
  using standard two-dimensional arguments, we obtain the following
  inequality:
  \begin{multline*}
       \NORM[{\Lspace(\SubsetU[\meshparam])}]{\Der[2]\Gfun}
      \le
        \NORM[{\infty,\Cell}]{\Der[2]\left(\matW^T\DiffTens\,\First^{-1}\matW\right)}
      \NORM[{\Lspace(\SubsetU[\meshparam])}]{\Grad\bTestApprox}
      \NORM[{\Lspace(\SubsetU[\meshparam])}]{\Grad\bTestWApprox}\\
      \le
      \frac{1}{\CminDetG[\Cell]\EigFirstInvmin[\Cell]}
      \NORM[{\infty,\Cell}]{\Der[2]\left(\matW^T\DiffTens\,\First^{-1}\matW\right)}
      \NORM[{\Lspace(\Cell)}]{\Grad\TestApprox}
      \NORM[{\Lspace(\Cell)}]{\Grad\TestWApprox}
      \,,
  \end{multline*}
  %
%
  where we have used the fact that $\Der[2]\bTestApprox=0$ for
  $\PONE$ test functions. The final estimate is obtained by summation,
  where $\NORM[{\infty,\SurfDomain}]{\Der[2]\left(\matW^T\DiffTens\,\First^{-1}\matW\right)}$
  is the maximum over all the elements of the sup-norm of
  $\Der[2]\left(\matW^T\DiffTens\,\First^{-1}\matW\right)$.
  Recalling that
  $\NORM[\meshparam]{\Gfun}=\NORM[\meshparam]{\Interpolant(\Gfun)}$,
  from continuity of the bilinear form and \cref{eq:discrete-norm-prop},
  we obtain the following estimate directly written on $\SurfDomain$:
  \begin{equation*}
    \NORM[\meshparam]{\Gfun}^2
    \le
    \frac{4}{\CminDetG[\SurfDomain]}{\DiffEigMax}^2  
    \NORM[\Sob{1}(\SurfDomain)]{\TestApprox}^2\NORM[\Sob{1}(\SurfDomain)]{\TestWApprox}^2
    \, .
  \end{equation*}    
Finally, we can put the previous inequalities together and sum over all
the elements.
  The consistency estimate for the quadrature-based bilinear
  form then becomes: 
  \begin{multline*}
    \ABS{\BilinearStiff{\TestApprox}{\TestWApprox}-
      \BilinearStiffApprox{\TestApprox}{\TestWApprox}}\\
    \le C \meshparam^2
    \left(
      \frac{\ConstGeom[1]}{{\CminDetG[\SurfDomain]\EigFirstInvmin[\SurfDomain]}}\NORM[{\infty,\SurfDomain}]{\Der[2]\left(\matW^T\DiffTens\,\First^{-1}\matW\right)} + \ConstGeom[3]\frac{2\DiffEigMax}{\sqrt{\CminDetG[\SurfDomain]}} \right)\NORM[\Sob{1}(\SurfDomain)]{\TestApprox}\NORM[\Sob{1}(\SurfDomain)]{\TestWApprox}\,,
  \end{multline*}
  where we set $\ConstGeom[i]=\max_{\Cell}\ConstGeom[i,\Cell]$, for $i=1,3$.

  The proof of \cref{eq:consistency-force} proceeds analogously with
  the definition of 
  $\Gfun$ replaced by $\force\TestWApprox$.
\end{proof}

The previous lemma together with the interpolation and
quadrature errors yield the main convergence theorem for ISFEM.
\begin{theorem}[Optimal $\Sob{1}$-norm convergence]
  The ISFEM approach converges in the $\Sob{1}$-norm with optimal
  first order accuracy:
  \begin{equation*}
    \NORM[\Sob{1}(\SurfDomain)]{\Sol-\SolApprox}
    \le \mathbb{C}_1 \meshparam \NORM[\Lspace(\SurfDomain)]{\force}
    + \mathbb{C}_2 \meshparam^2 \NORM[\Sob{2}(\SurfDomain)]{\force}
  \end{equation*}
  where the constants $\mathbb{C}_1,\mathbb{C}_2$ depend on the
  surface $\SurfDomain$ and its geometrical characteristics and on
  the upper and lower bounds of the diffusion tensor $\DiffTens$, but
  not upon the surface discretization.
\end{theorem}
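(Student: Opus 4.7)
The plan is to use the surface Strang-like lemma as the master inequality, choose $\TestApprox = \Interpolant(\Sol)$ as the comparison element, and then bound each of the three resulting contributions by invoking \cref{lemma:interp} for the best-approximation term and \cref{lemma:bilinear-consistency} for the two consistency terms. This reduces the problem to controlling $\Sol$ in $\Sob{2}(\SurfDomain)$, which is handled by elliptic regularity on the surface.

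More precisely, I would first apply the Strang-like lemma with $\TestApprox=\Interpolant(\Sol)\in\TestSpaceApprox$. For the best-approximation contribution, I sum the local interpolation estimates from \cref{lemma:interp} over all cells $\Cell\in\Triang(\SurfDomain)$ and pass from the local chart norms $\NORM[\Lspace(\SubsetU[\meshparam])]{\Der[2]\Sol}$ back to the global surface norm via the metric bounds in \cref{eq:metric-estimates}, obtaining a bound of the form $C\,\ConstGeom[2]\,\meshparam\,\NORM[\Sob{2}(\SurfDomain)]{\Sol}$. By the a priori elliptic regularity estimate on $\SurfDomain$, together with \cref{eq:h2seminorm-u-lap}, we have $\NORM[\Sob{2}(\SurfDomain)]{\Sol}\le C\,\NORM[\Lspace(\SurfDomain)]{\force}$, so this contributes to the $\meshparam \NORM[\Lspace(\SurfDomain)]{\force}$ part of the final bound with a multiplicative factor absorbing $\EigFirstInvmax[\SurfDomain]$, $\CmaxDetG[\SurfDomain]$, $\ABS{\SecondForm[\SurfDomain]}$, $\DiffEigMin$, $\DiffEigMax$ and $\CPoincare$.

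For the bilinear consistency term I use \cref{eq:consistency-bilinear} with $\TestApprox=\Interpolant(\Sol)$ and estimate $\NORM[\Sob{1}(\SurfDomain)]{\Interpolant(\Sol)}\le\NORM[\Sob{1}(\SurfDomain)]{\Sol}+\NORM[\Sob{1}(\SurfDomain)]{\Sol-\Interpolant(\Sol)}\le C\,\NORM[\Lspace(\SurfDomain)]{\force}$ using again \cref{lemma:interp} and elliptic regularity. This contributes to both $\meshparam\NORM[\Lspace(\SurfDomain)]{\force}$ and $\meshparam^2\NORM[\Lspace(\SurfDomain)]{\force}$ terms, which for small $\meshparam$ are absorbed in $\mathbb{C}_1\meshparam\NORM[\Lspace(\SurfDomain)]{\force}$. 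The linear consistency term is handled via \cref{eq:consistency-force}: the first summand gives $C\,\meshparam\,\NORM[\meshparam]{\force}\le C\,\meshparam\,\NORM[\Lspace(\SurfDomain)]{\force}$ by the norm equivalence \cref{eq:discrete-norm-prop}, feeding the $\mathbb{C}_1\meshparam$ part; the second summand gives exactly $C\,\meshparam^2\,\ABS[\Sob{2}(\SurfDomain)]{\force}$, which feeds the $\mathbb{C}_2\meshparam^2\NORM[\Sob{2}(\SurfDomain)]{\force}$ part. Summing all three contributions yields the stated bound, with $\mathbb{C}_1,\mathbb{C}_2$ expressible in terms of $\CPoincare$, $\EigFirst[*,\SurfDomain]$, $\EigFirstInvmax[\SurfDomain]$, $\CminDetG[\SurfDomain]$, $\CmaxDetG[\SurfDomain]$, $\ABS{\SecondForm[\SurfDomain]}$, $\DiffEigMin$, $\DiffEigMax$ and $\BrNORM[\Sob{2}(\SurfDomain)]{\DiffTens\,\First^{-1}}$.

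The main obstacle I anticipate is the clean invocation of elliptic regularity on the surface: strictly speaking, \cref{eq:h2seminorm-u-lap} only relates the $\Sob{2}$-seminorm to $\NORM[\Lspace]{\LapSurf\Sol}$ up to curvature-dependent constants, so one needs a version of the Calderón--Zygmund estimate for the operator $-\DivSurf(\DiffTens\GradSurf\,\cdot)$ on $\SurfDomain$ with the chosen boundary condition. Once that is in hand the rest of the argument is bookkeeping: carefully tracking how $\ConstGeom[1]$, $\ConstGeom[2]$, $\ConstGeom[3]$ and the metric constants combine in the two final constants $\mathbb{C}_1,\mathbb{C}_2$. A minor additional care is needed in verifying that the interpolant $\Interpolant(\Sol)$, constructed through the nonlinear basis of \cref{sec:isfem}, really satisfies the planar $\PONE$-type estimate of \cref{lemma:interp}; this was set up precisely in \cref{prop:basisprop} and needs only to be invoked cell-by-cell before summation.
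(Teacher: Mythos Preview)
Your proposal is correct and follows essentially the same route as the paper: Strang-like lemma with the choice $\TestApprox=\Interpolant(\Sol)$, then the interpolation estimate of \cref{lemma:interp} for the best-approximation term and the two consistency bounds of \cref{lemma:bilinear-consistency} for the remaining pieces. The only minor deviation is in how you control $\NORM[\Sob{1}(\SurfDomain)]{\Interpolant(\Sol)}$: you go through the triangle inequality and continuous elliptic regularity, whereas the paper appeals to the discrete coercivity of \cref{lemma:discrete-coerc} together with the discrete stability bound $\BilinearStiffApprox{\cdot}{\cdot}\mapsto\RhsApprox{\cdot}$ to reach $\NORM[\Lspace(\SurfDomain)]{\forceApprox}$ directly; both arrive at a bound by $\NORM[\Lspace(\SurfDomain)]{\force}$ and the difference is cosmetic. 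Your explicit flagging of the elliptic-regularity step (needed to pass from $\NORM[\Lspace(\SurfDomain)]{\Der[2]\Sol}$ to $\NORM[\Lspace(\SurfDomain)]{\force}$) is actually more careful than the paper, which uses it implicitly.
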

\begin{proof}
  The proof is a direct application of the previous two lemmas.
  Indeed, including the consistency estimates into the
  surface Strang-like lemma we obtain:
  \begin{align*}
    &\NORM[\Sob{1}(\SurfDomain)]{\Sol-\SolApprox}
    \le\inf_{\TestApprox}\left[
      \left(1+\DiffEigMax\frac{\CmaxDetG[\SurfDomain](1+\CPoincare^2)}{\DiffEigMin\EigFirst[*,\SurfDomain]}\right)
      \NORM[\Sob{1}(\SurfDomain)]{\Sol-\TestApprox}\right.\\
    &\quad\quad\left.
      +\frac{\CmaxDetG[\SurfDomain](1+\CPoincare^2)}{\DiffEigMin\EigFirst[*,\SurfDomain]}
      \sup_{\TestWApprox}
      \frac{\ABS{\BilinearStiff{\TestApprox}{\TestWApprox}
              -\BilinearStiffApprox{\TestApprox}{\TestWApprox}}}%
           {\NORM[\Sob{1}(\SurfDomain)]{\TestWApprox}}
         \right]
           + \frac{\CmaxDetG[\SurfDomain](1+\CPoincare^2)}{\DiffEigMin\EigFirst[*,\SurfDomain]}
\sup_{\TestWApprox}
      \frac{\ABS{\Rhs{\TestWApprox}-\RhsApprox{\TestWApprox}}}%
           {{\NORM[\Sob{1}(\SurfDomain)]{\TestWApprox}}}\\
    &\le
      C\left(1+\DiffEigMax\frac{\CmaxDetG[\SurfDomain](1+\CPoincare^2)}{\DiffEigMin\EigFirst[*,\SurfDomain]}\right)
      \frac{\left(\ConstGeom[2]+\ConstGeom[1]\meshparam\right)}{\sqrt{\CminDetG[\SurfDomain]}}
      \meshparam
      \NORM[\Lspace(\SurfDomain)]{\Der[2]\Sol}
\\
    &\quad\quad
      +C \frac{\CmaxDetG[\SurfDomain](1+\CPoincare^2)}{\DiffEigMin\EigFirst[*,\SurfDomain]}
      \meshparam^2
      \Bigg[\ConstGeom[3]\left(\frac{2\DiffEigMax}{\sqrt{\CminDetG[\SurfDomain]}}\NORM[\Sob{1}(\SurfDomain)]{\Interpolant(\Sol)}+\NORM[\meshparam]{\force}\right)\\
    &\qquad\qquad
      +\ConstGeom[1]
      \left( \frac{1}{\CminDetG[\SurfDomain]\EigFirstInvmin[\SurfDomain]}\NORM[{\infty,\SurfDomain}]{\Der[2]\left(\matW^T\DiffTens\,\First^{-1}\matW\right)}\NORM[\Sob{1}(\SurfDomain)]{\Interpolant(\Sol)}
    +\frac{\NORM[\Lspace(\SurfDomain)]{\Der[2]\force}}{\sqrt{\CminDetG[\SurfDomain]}}\right)\Bigg]
      \, , 
  \end{align*}
  where we chose $\TestApprox=\Interpolant(\Sol)$ and we used the
  inequality:
  \begin{multline*}
    \NORM[\Sob{1}(\SurfDomain)]{\TestApprox}^2
      \le
      \frac{\CmaxDetG[\SurfDomain](1+\CPoincare^2)}{\DiffEigMin\EigFirst[*,\SurfDomain]} \BilinearStiffApprox{\TestApprox}{\TestApprox}\\
      =
      \frac{\CmaxDetG[\SurfDomain](1+\CPoincare^2)}{\DiffEigMin\EigFirst[*,\SurfDomain]}\RhsApprox{\TestApprox}
    \le
      \frac{\CmaxDetG[\SurfDomain](1+\CPoincare^2)}{\DiffEigMin\EigFirst[*,\SurfDomain]}
      \frac{4}{{\CminDetG[\SurfDomain]}}
      \NORM[\Lspace(\SurfDomain)]{\forceApprox}\NORM[\Lspace(\SurfDomain)]{\TestApprox}\,.
  \end{multline*}
Note that the last inequality comes directly from the continuity of $\RhsApprox{\TestApprox}$.
\end{proof}

  The estimate in the above theorem contains on the right-hand-side
  a standard term related to the interpolation error plus a second
  term that goes to zero for a flat surface and it is thus
    compatible with standard FEM $\PONE$ estimates.
Finally, we conclude this section by mentioning only that
using the standard duality arguments optimal $\Lspace$
convergence is obtained.

\section{Numerical experiments}
\label{sec:results}

In this section we provide numerical support to the results presented
in the previous sections.
We present two different numerical experiments. In the first test case
the aim is to show that the value of the constant in the
$\Lspace$-error for the solution and its gradient increases while the
maximum value of the curvature increases. The second test case shows
that the ISFEM method can be directly applied in the presence of
multiple charts, if compatible sets of tangent vectors are
available. Both test cases were considered already in
\citep{art:BMP21} for the case of advection-diffusion-reaction
equation discretized on the chart and numerically solved by means of a
geometrically intrinsic version of the virtual element method. For the
test case 2, we extend the result in \citep{art:BMP21} by directly
solving the equation on the sphere, without the use of extra
conditions at the interface of the two charts. 

In all the experiments we consider a manufactured solution
$\Sol\From\SurfDomain\To\REAL$ and calculate the resulting forcing
function $\force$ by substitution into the original equation.  Note
that, even a simple manufactured solution would become highly
nonlinear when considered on a surface, due to the spatially varying
geometric information.

\subsection*{Test case 1}
\begin{figure}
  \centerline{
    \includegraphics[width=0.31\textwidth]{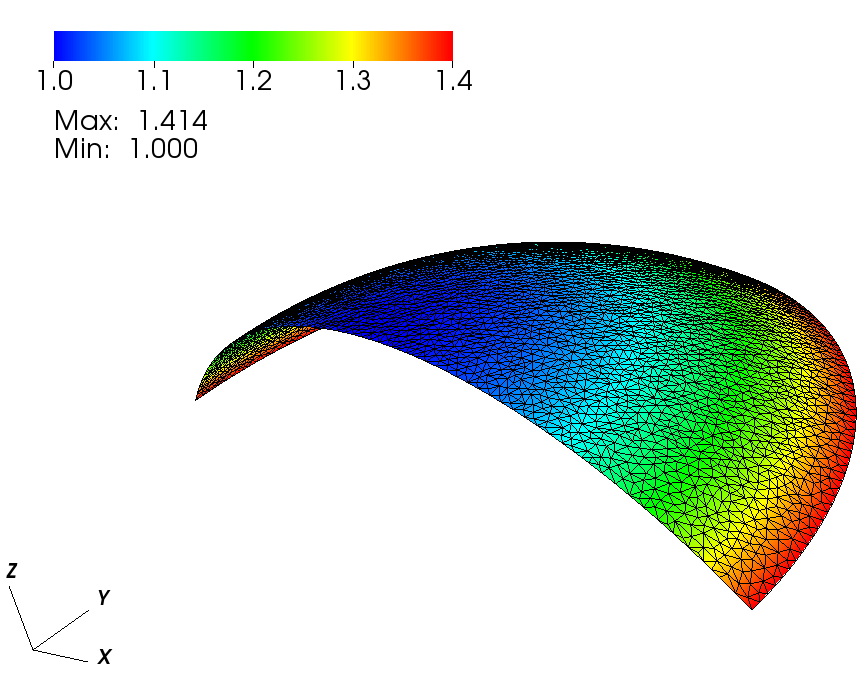}
    \includegraphics[width=0.31\textwidth]{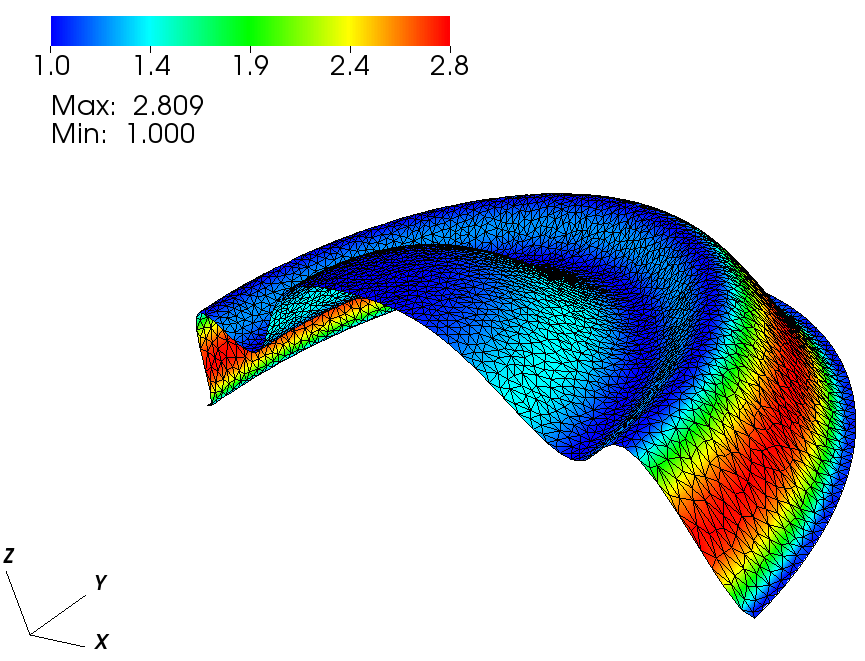}
    \includegraphics[width=0.31\textwidth]{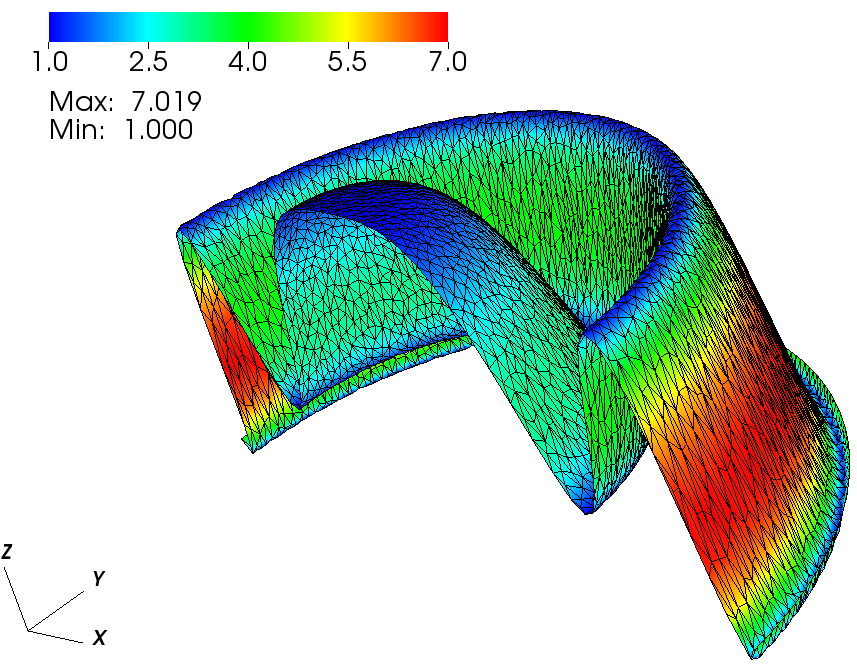}
  }
  \caption{Surfaces described by \cref{eq:surface-trig} with $r=2$,
    $k=5$ and $a=0, 0.5, 2$, respectively in the left, middle and
    right panels. The color map shows the value of
    $\sqrt{\DET{\First(\SurfDomain)}}$.  }
  \label{fig:surfaces}
\end{figure}

For the test case 1 we consider the surface provided by the graph of
the following height function (see \citep{art:BMP21}):
\begin{equation}\label{eq:surface-trig}
  \zcg=\height(\xcg,\ycg)=\sqrt{r - (\xcg)^2 - (\ycg)^2 +
    a \cos^2\left(k\frac{\pi}{2} ((\xcg)^2 + (\ycg)^2)\right)} \; ,
\end{equation}
a trigonometric perturbation of a sphere, where $r$ is the radius of
the sphere, and $a$ and $k$ are the amplitude and the frequency of the
cosine trigonometric perturbation.
We use the Monge parametrization and define the surface by
$\SurfDomain=\{(\xcg,\ycg,\height(\xcg,\ycg)) \;|\;
\ycg\ge0 \mbox{ and } (\xcg)^2+(\ycg)^2\le 1\}$, with a radius $r=2$ and a
frequency $k=5$.
Figure~\ref{fig:surfaces} shows the surfaces obtained with different
values of the amplitude $a$: a sphere ($a=0$) is shown in the left
panel, and two trigonometric deformations of the sphere are shown in
the middle and left panels for the case $a=0.5$ and $a=2$,
respectively. The color map shows the distribution in space of
$\sqrt{\DET{\First(\SurfDomain)}}$.
The mesh sets used in this test case are obtained from subsequent
refinements of Delaunay triangulations of
$\SubsetU = \{(\xcg,\ycg) \;|\; \ycg\ge0 \mbox{ and }
(\xcg)^2+(\ycg)^2\le 1\}$, then elevated using the height function
\cref{eq:surface-trig}. We consider 8 levels of refinement, with a
initial value of the surface mesh parameter $\meshparam \approx 0.25$
at $\Lev=0$. This corresponds to a total of 70 surface nodes for the
case of $a=0$ and $a=0.05$, while for the case $a=2$ the total number
of nodes is 265 for $\Lev=0$.
We define $\Sol=\xcg$ as manufactured solution and we compute an
expression for the forcing function by the equation
$\force(\xcl,\ycl)=-\LapSurf\Sol$. We apply Dirichlet boundary
condition by imposing the exact solution at the boundary nodes.

\begin{figure}
  \includegraphics[width=0.95\linewidth]{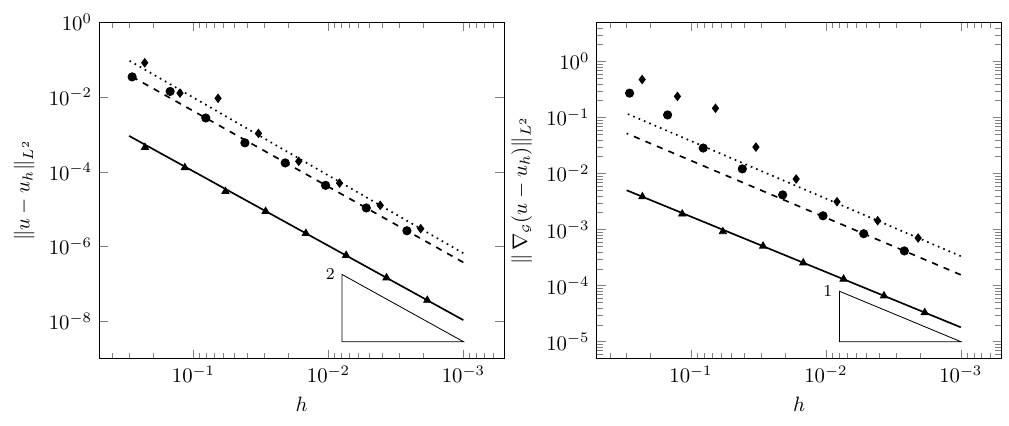}
  \caption{TC1: Numerical convergence of $\Lspace$-errors for the
    solution (left) and its gradient (right) vs $\meshparam$ on the
    surface triangulation. The convergence lines are obtained by means
    of least-square approximation considering the last 2 point
    values. The different lines denote the three different values of
    $a$ considered: solid line with triangular data points is used for
    the case $a=0$, dashed line with circular data points for $a=0.5$,
    and dotted line with diamond data points for the case $a=2$. The
    optimal theoretical slope is represented by the lower right
    triangles.}
  \label{fig:tc1:conv}
\end{figure}

\Cref{fig:tc1:conv} shows the $\Lspace$-errors and experimental
orders of convergence for the solution and its gradient. We notice
second order convergence rates for the solution and first order for
the gradient. Convergence rates slightly different than the optimal
ones at the initial levels are attributable to a too coarse
resolution of the surface triangulation, not accurate enough to
approximate the surface. In particular, this phenomenon can be observed
in the case of higher values of the parameter $a$ that corresponds to
higher values of the surface curvature.

\subsection*{Test case 2}
\begin{figure}
  \centerline{
    \includegraphics[width=0.45\linewidth]{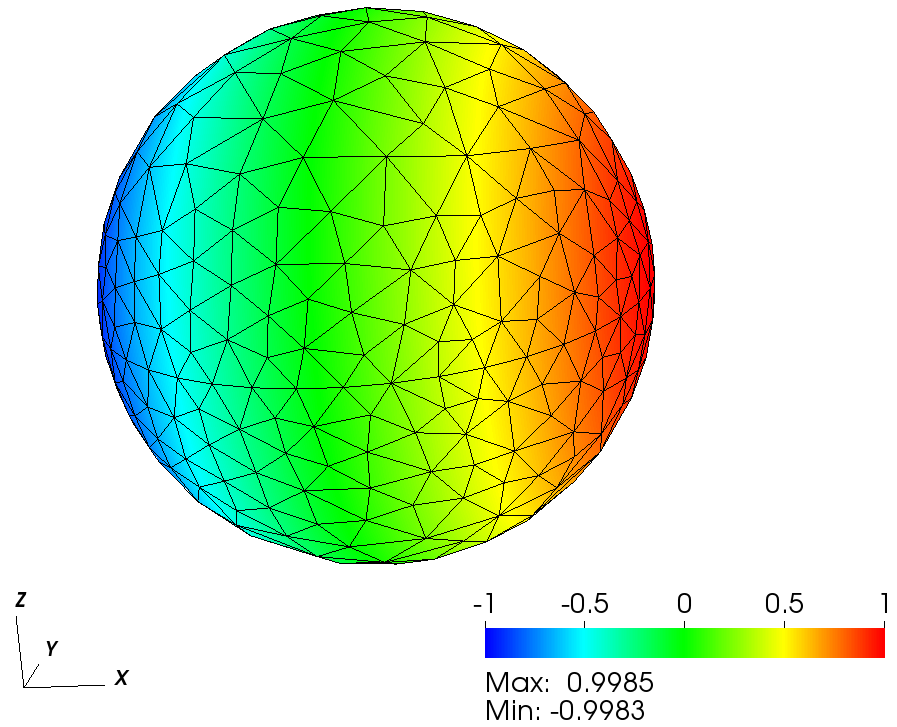}
    \includegraphics[width=0.45\linewidth]{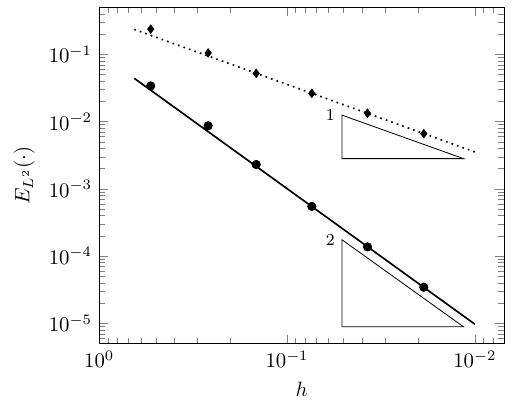}
  }
  \caption{TC2: Numerical solution on the sphere (mesh level
    $\Lev=1$), left panel, and numerical convergences in the $L2-$norm
    for both the solution (solid line with circular data points) and
    its gradient (dotted line with diamond data points), right panel.
    The convergence lines are obtained by approximating via
    least-square the last 3 point values.}
  \label{fig:sphere}
\end{figure}

We consider here $\SurfDomain=S^2$.  We use two parametrizations, one
for the northern and one for the southern hemispheres, to define two
sets of tangent vectors and consider the equator as intersection
set. A smooth transition map is known in this particular case. The
stereographic projections for the northern hemisphere is given by:
\begin{equation}
  \label{eq:projN}
  \MapU[N](\xcl,\ycl)
  =\left(\frac{2\xcl}{1+(\xcl)^2+(\ycl)^2},
    \frac{2\ycl}{1+(\xcl)^2+(\ycl)^2},
    \frac{1-(\xcl)^2-(\ycl)^2}{1+(\xcl)^2+(\ycl)^2}\right)
  =(\xcg,\ycg,\zcg)\,,
\end{equation}
and similarly for the southern hemisphere, with the transition map
between north and south written as:
\begin{equation*}
  \psi(\xcl[N],\ycl[N])=\left(\frac{\xcl[N]}{(\xcl[N])^2+(\ycl[N])^2},
      \frac{\ycl[N]}{(\xcl[N])^2+(\ycl[N])^2}\right) = (\xcl[S],\ycl[S])\,.
\end{equation*}
The surface triangulation of the sphere is obtained by computing a
Delaunay triangulation of the disk,
$\SubsetU = \{(\xcl,\ycl) \;|\; (\xcl)^2+(\ycl)^2\le 1\}$, then
projecting the points to the surface using the two stereographic
projections. We consider a set
of meshes with 6 levels of refinement, with a initial value of the
surface mesh parameter $\meshparam=0.532$ and a total of 111 surface
nodes.
Analogously to the first test case, we assume $\Sol=\xcg$ and compute
the forcing function by $\force(\xcl,\ycl)=-\LapSurf\Sol+\Sol$, where
$\xcg\ne\xcl$ because of the use of the stereographic projection. We
note that, contrary to test case 1, we use the inverse of
\cref{eq:projN} (as well as the south projection) to compute
$(\xcl,\ycl)$ from $(\gcscomp)$ in the evaluation of the forcing
function.
\Cref{fig:sphere} show on the left the manufactured solution and on
the right the $\Lspace$-errors and experimental orders of
convergence for the solution and its gradient. Again, we notice an
optimal order of convergence rates for both the solution and the
gradient.

\section*{Acknowledgments}
This study received funding from the European Union - Next Generation
EU National Recovery and Resilience Plan [NRRP], Mission 4, Component
2, Investment 1.3–D.D. 1243 2/082022, PE0000005 Extended Partnership
“RETURN: Multi- Risk sciEnce for resilienT commUnities underR a
changiNg climate” and Mission 4, Component 2, Investment 1.5 - Call
for tender No. 3277 of 30 dicembre 2021 ECS00000043, No. 1058 of June
23, 2023, CUP C43C22000340006, “iNEST: Interconnected Nord-Est
Innovation Ecosystem”. The authors are members of the Gruppo Nazionale
Calcolo Scientifico - Istituto Nazionale di Alta Matematica
(GNCS-INdAM).

\bibliography{strings,biblio}
\bibliographystyle{abbrvnat}

\end{document}